\newtheorem{thm}{Theorem}[section]
\newtheorem{lemma}[thm]{Lemma}
\newtheorem{cor}[thm]{Corollary}
\theoremstyle{definition}
\newtheorem{remark}[thm]{Remark}
\def\XXint#1#2#3{{\setbox0=\hbox{$#1{#2#3}{\int}$}
         \vcenter{\hbox{$#2#3$}}\kern-.5\wd0}}
\def\R{\mathbb{R}}
\def\C{\mathbb{C}}
\def\e{\varepsilon}
\def\Rd{\mathbb{R}^{d}}
\def\Rdp{\mathbb{R}^d_+}
\def\Cd{\mathbb{C}^d}
\def\Cdd{\mathbb{C}^{d\times d}}
\def\hW{\accentset{\circ}{W}}
\def\wu{\widetilde{u}}
\def\wp{\widetilde{p}}
\def\loc{{\rm loc}}
\def\H{\mathbb{H}}
\numberwithin{equation}{section}
\begin{document}

\title{Resolvent Estimates   for the Stokes Operator \\ in Bounded and Exterior $C^1$ Domains}

\author{Jun Geng  \thanks{Supported in part by NNSF grant 12371096.}
\qquad
Zhongwei Shen \thanks{Supported in part by NSF grant DMS-2153585.}}
\date{}

\maketitle

\begin{abstract}

We establish  resolvent estimates in $L^q$ spaces for the Stokes operator in a bounded   $C^1$ domain  $\Omega$  in $\Rd$.
As a corollary, it follows  that the Stokes operator generates a bounded analytic semigroup in  $L^q(\Omega; \C^d)$
for any $1< q< \infty$ and $d\ge 2$. The case of an exterior $C^1$ domain is also studied. 

\medskip

\noindent{\it Keywords}: Resolvent Estimate; Stokes Operator; $C^1$ Domain.

\medskip

\noindent {\it MR (2010) Subject Classification}: 35Q30.

\end{abstract}

\section{Introduction}

In this paper we study  the resolvent  problem for the  Stokes operator with the Dirichlet condition,
\begin{equation}\label{eq-0}
\left\{
\aligned
-\Delta  u +\nabla p +\lambda u & = F  & \quad & \text{ in } \Omega, \\
\text{\rm div}(u) & =0 & \quad  & \text{ in } \Omega,\\
u&=0 & \quad & \text{ on } \partial\Omega,
\endaligned
\right.
\end{equation}
where $\lambda\in \Sigma_\theta $ is a  parameter and 
\begin{equation}\label{Sigma}
\Sigma_\theta
=\left\{
z\in \mathbb{C}\setminus \{ 0\}: \ |\text{arg} (z)|< \pi -\theta
\right\}
\end{equation}
for $\theta \in (0, \pi/2)$.
The following two theorems are the main results of the paper.
The first one covers the case of bounded domains with $C^1$ boundaries,
while the second treats the case of exterior $C^1$ domains.

\begin{thm}\label{main-1}
Let $\Omega$ be a bounded $C^1$ domain in $\Rd$, $d\ge 2$.
Let $1<q<\infty$ and $\lambda \in \Sigma_\theta$.
Then for any $F\in L^q(\Omega; \Cd)$, the Dirichlet problem \eqref{eq-0} has a unique solution $(u, p)$  in 
$W_0^{1, q}(\Omega; \Cd) \times L^q(\Omega; \C)$ with $\int_\Omega p=0$.
Moreover, the solution satisfies the estimate,
\begin{equation}\label{est-0}
(|\lambda| + 1)^{1/2} \| \nabla u \|_{L^q(\Omega)}
+ (|\lambda|+1) \| u \|_{L^q(\Omega)}
\le C  \| F \|_{L^q(\Omega)}, 
\end{equation}
where $C$ depends only on $d$, $q$, $\theta$ and $\Omega$.
\end{thm}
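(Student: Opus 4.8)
The plan is to prove the a priori estimate \eqref{est-0} first, and then recover existence and uniqueness from it. Uniqueness is immediate: if $F=0$, \eqref{est-0} forces $u=0$, hence $\nabla p=0$, and the normalization $\int_\Omega p=0$ gives $p=0$. For $q=2$, existence follows from the Lax--Milgram theorem applied on the space of divergence-free fields in $W_0^{1,2}(\Omega;\Cd)$, the pressure being recovered by de Rham's theorem; for general $q$ one approximates $F$ in $L^q$ by fields in $L^2\cap L^q$, solves in $L^2$, and passes to the limit using the $L^q$ a priori bound. So the whole problem reduces to establishing \eqref{est-0}.

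The base case $q=2$ I would handle by the energy method. Pairing the momentum equation with $\bar u$ and integrating by parts, the pressure term drops out because $\mathrm{div}(u)=0$ and $u=0$ on $\partial\Omega$, leaving $\int_\Omega|\nabla u|^2+\lambda\int_\Omega|u|^2=\int_\Omega F\cdot\bar u$. For $\lambda\in\Sigma_\theta$ one has the elementary sector inequality $|a+\lambda b|\ge c_\theta\,(a+|\lambda|\,b)$ valid for all $a,b\ge 0$, which gives $\|\nabla u\|_{L^2}^2+|\lambda|\,\|u\|_{L^2}^2\le C_\theta\,\|F\|_{L^2}\|u\|_{L^2}$. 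Hence $|\lambda|\,\|u\|_{L^2}\le C\|F\|_{L^2}$ and $\|\nabla u\|_{L^2}^2\le C\|F\|_{L^2}\|u\|_{L^2}$; using the Poincar\'e inequality to control the low-frequency part, these combine into \eqref{est-0} with the full weight $(|\lambda|+1)$ when $q=2$.

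The heart of the matter is the range $q\neq 2$, which I would treat by the real-variable method. It is convenient to set $\ell=(|\lambda|+1)^{-1/2}$ and $w=|\nabla u|+\ell^{-1}|u|$, so that \eqref{est-0} reads $\ell^{-1}\|w\|_{L^q(\Omega)}\le C\|F\|_{L^q(\Omega)}$. For $2<q<\infty$ the steps are: (i) establish the resolvent estimate in the model domains $\Rd$ and $\Rdp$, where it holds for every $1<q<\infty$ by explicit solution formulas and the Mikhlin multiplier theorem; (ii) localize $\Omega$ by a partition of unity and flatten $\partial\Omega$ through the $C^1$ charts, so that on each boundary patch the system becomes a small, continuous perturbation of the constant-coefficient half-space model; (iii) deduce from this comparison a weak reverse H\"older inequality for $w$ for local solutions of the homogeneous system, uniformly in $\lambda$, proving it at scales $r\le\ell$ (where the zero-order term $\lambda u$ is subordinate) and matching it against the control available at scales $r\ge\ell$; and (iv) feed this into the real-variable lemma to upgrade the $L^2$ bound of the previous step to the $L^q$ bound. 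The range $1<q<2$ then follows by duality, since the adjoint of \eqref{eq-0} is again a Stokes resolvent problem with parameter $\bar\lambda\in\Sigma_\theta$, to which the $q'>2$ estimate applies.

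I expect the main obstacle to be Steps (ii)--(iii) in the $C^1$ setting. Because $\partial\Omega$ is merely $C^1$, there are no second-derivative estimates, so every bound must live at the level of $\nabla u$ and $p\in L^q$, and the pressure is governed nonlocally by $\Delta p=\mathrm{div}(F-\lambda u)$. Controlling $p$ near the $C^1$ boundary uniformly in $\lambda$, and verifying that the $C^1$ modulus of continuity renders the half-space model a genuinely small perturbation at the relevant scales so that the reverse H\"older constant stays independent of $\lambda$, is the delicate point; the divergence-free constraint couples $u$ and $p$ and must be preserved through the localization, which is where most of the technical work lies.
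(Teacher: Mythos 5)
Your outer reductions essentially coincide with the paper's own: the case $q=2$ by the energy method with the sector inequality, existence by approximating $F$ with data in $L^2\cap L^q$, the range $1<q<2$ by duality against the adjoint resolvent problem with parameter $\bar\lambda\in\Sigma_\theta$, and pressure normalization at the end --- these are Steps 1--4 of the paper's proof of Theorem \ref{thm-B-1} (note, though, that your one-line uniqueness claim for $1<q<2$ tacitly assumes \eqref{est-0} holds a priori for an \emph{arbitrary} solution in $W^{1,q}_0(\Omega;\Cd)\times L^q(\Omega;\C)$; your duality derivation does deliver this, but only after justifying the pairing with divergence-free test fields, which the paper carries out in its Step 3 by solving the adjoint problem with data $|u|^{q-2}\bar u$). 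The genuine divergence is in the engine for $2<q<\infty$. The paper uses no reverse H\"older or real-variable machinery at all: it shows that in a graph domain $\H_\psi$ with $\|\nabla'\psi\|_\infty$ small, the operator $S_\psi^\lambda(u,p)=(-\Delta u+\nabla p+\lambda u,\ \text{\rm div}(u))$ is invertible from $X^q_\psi=W^{1,q}_0\times A^q_\psi$ onto $Y^q_\psi=W^{-1,q}\times B^q_\psi$ with bounds independent of $\psi$ (Theorems \ref{G-thm-1} and \ref{thm-G}); it then localizes, measuring the cut-off errors in $W^{-1,q}$ and $\hW^{-1,q}$ norms, which yields the a priori bound of Lemma \ref{Lemma-B-3} with the extra term $|\lambda|\,\|u\|_{W^{-1,q}_0(\Omega)}$, absorbed by a Sobolev-embedding bootstrap starting from $q=2$ (Lemma \ref{Lemma-B-4}); existence then comes from smooth-domain approximation. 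Your plan instead feeds the localized comparison into the real-variable scheme of \cite{Shen-2012}. That route is legitimate in principle and is exactly the method known to work in Lipschitz domains up to the restricted range \eqref{Lip}; what $C^1$ buys you is full-range small-scale regularity, while the paper's functional-analytic route avoids reverse H\"older inequalities entirely.

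The gap is that your step (iii) is asserted precisely where the whole difficulty lives. A weak reverse H\"older inequality valid at scales $r\le\ell=(|\lambda|+1)^{-1/2}$ does \emph{not} upgrade to scales $r\ge\ell$ by covering: covering $B_r$ by $\ell$-balls and applying the small-scale inequality only bounds $\int_{B_r}w^q$ by $C\int_{B_{2r}}w^q$ via the maximal function, which is vacuous. At scales $r\ge\ell$, the boundary Caccioppoli inequality (uniform in $\lambda\in\Sigma_\theta$) gives $\fint_{B_r\cap\Omega}|\nabla u|^2\le Cr^{-2}\fint_{B_{2r}\cap\Omega}|u|^2\le C\,\ell^{-2}\fint_{B_{2r}\cap\Omega}|u|^2$, so the gradient part of $w$ is dominated by the zero-order part, and what is actually needed is a $\lambda$-uniform boundary decay (H\"older-type) estimate for solutions vanishing on a boundary patch, strong enough to sum $\ell$-scale $L^2$ averages at exponent $q$; in \cite{Shen-2012} this is the technical core of the paper, and in the Lipschitz category it provably caps the admissible exponents, so your phrase ``matching it against the control available at scales $r\ge\ell$'' conceals an argument that must exploit the $C^1$ hypothesis quantitatively (small local Lipschitz constant at every scale) and whose $\lambda$-uniformity is not routine. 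Two further points: the scheme of \cite{Shen-2012} most naturally yields $|\lambda|\,\|u\|_{L^q}\le C\|F\|_{L^q}$, whereas your gradient half $(|\lambda|+1)^{1/2}\|\nabla u\|_{L^q}$ requires the weak reverse H\"older inequality for $w$ itself, i.e.\ local $W^{1,q}$ estimates for the resolvent problem at scales above $\ell$ --- dangerously close to what you are trying to prove unless the induction on scales is organized with care; and the theorem also asserts $p\in L^q(\Omega)$, which \eqref{est-0} does not control, so your limiting argument needs the pressure bound $\|p\|_{L^q(\Omega)}\le C\|\nabla p\|_{W^{-1,q}(\Omega)}$ recovered from the equation (the paper's Lemma \ref{Lemma-B-2}); you flag this as a delicacy, but it must be made explicit.
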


\begin{thm}\label{main-2}
Let $\Omega$ be an exterior domain with $C^1$ boundary  in $\Rd$, $d\ge 2$.
Let $1<q<\infty$ and $\lambda\in \Sigma_\theta$ with $|\lambda|\ge \delta>0$.
Then for any $F\in L^q(\Omega; \Cd)$, the Dirichlet problem \eqref{eq-0} has a unique solution $(u, p)$  in 
$W_0^{1, q}(\Omega; \Cd) \times L^q_{\loc}(\overline{\Omega}; \C) $.
Moreover, the solution satisfies the estimate,
\begin{equation}\label{est-1}
|\lambda|^{1/2} \| \nabla u \|_{L^q(\Omega)}
+ |\lambda| \| u \|_{L^q(\Omega)}
\le C  \| F \|_{L^q(\Omega)}, 
\end{equation}
where $C$ depends only on $d$, $q$, $\theta$, $\delta$ and $\Omega$.
Furthermore, if $d\ge 3$, the estimate, 
\begin{equation}\label{est-2}
 |\lambda| \| u \|_{L^q(\Omega)}\le C \| F \|_{L^q(\Omega)},
 \end{equation} 
holds with $C$ independent of $\delta$.
\end{thm}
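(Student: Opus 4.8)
The plan is to reduce the exterior problem to two models already under control: the bounded $C^1$ domain near $\partial\Omega$ (Theorem \ref{main-1}) and the whole space $\Rd$ in the far field, where
\[
|\lambda|^{1/2}\|\nabla w\|_{L^q(\Rd)}+|\lambda|\,\|w\|_{L^q(\Rd)}\le C\,\|G\|_{L^q(\Rd)}
\]
for the Stokes resolvent problem with data $G$, a consequence of the $L^q$-boundedness of the Leray projection and the resolvent bounds for $-\Delta$ on $\Rd$. Fix $R$ with $\Rd\setminus\Omega\subset B_R$, choose $\theta\in C_c^\infty(B_{2R})$ with $\theta\equiv1$ on $B_R$, and set $\eta=1-\theta$. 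The obstruction to localizing is that $\theta u$ and $\eta u$ are not divergence-free, so I correct them with the Bogovskii operator $\mathcal B$ on the annulus $A=B_{2R}\setminus B_R$: since $g:=\nabla\theta\cdot u=\operatorname{div}(\theta u)$ has mean zero over $A$, the fields $v=\theta u-\mathcal B(g)$ and $w=\eta u+\mathcal B(g)$ satisfy $\operatorname{div}v=\operatorname{div}w=0$ and $u=v+w$, with $v\in W_0^{1,q}(\Omega_b;\Cd)$ supported in the bounded $C^1$ domain $\Omega_b:=\Omega\cap B_{2R}$ and $w$ a genuine field on $\Rd$ supported in $\Rd\setminus B_R$.

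A direct computation shows that $v$, with pressure $\theta p$, solves the bounded-domain resolvent problem with right-hand side
\[
F_v=\theta F+p\,\nabla\theta-2\,\nabla\theta\cdot\nabla u-(\Delta\theta)u-(-\Delta+\lambda)\mathcal B(g),
\]
and $w$, with pressure $\eta p$, solves the $\Rd$-problem with data $F_w=F-F_v$. Applying Theorem \ref{main-1} to $v$ and the whole-space estimate to $w$, and using the Bogovskii bounds $\|\mathcal B(g)\|_{L^q}\le C\|u\|_{L^q(A)}$ and $\|\nabla^2\mathcal B(g)\|_{L^q}\le C(\|u\|_{L^q(A)}+\|\nabla u\|_{L^q(A)})$, together with $u=v+w$, I obtain
\[
|\lambda|^{1/2}\|\nabla u\|_{L^q(\Omega)}+|\lambda|\,\|u\|_{L^q(\Omega)}\le C\|F\|_{L^q(\Omega)}+C\big(\|p-\bar p\|_{L^q(A)}+\|\nabla u\|_{L^q(A)}+(|\lambda|+1)\|u\|_{L^q(A)}\big),
\]
where $\bar p$ is the mean of $p$ over $A$ (only $\nabla p$ enters, so $p$ may be normalized there). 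Interior and boundary regularity for the stationary system $-\Delta u+\nabla p=F-\lambda u$ then bounds the first two error terms, leaving the single term $(|\lambda|+1)\|u\|_{L^q(A')}$ on a slightly larger fixed compact set $A'$.

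This leftover term is the main obstacle: it carries the same power of $|\lambda|$ as the left-hand side but lives on a bounded set, so it cannot be absorbed, and the pressure blocks a convergent perturbation series. I remove it by contradiction. If \eqref{est-1} failed, there would be $\lambda_n\in\Sigma_\theta$, $|\lambda_n|\ge\delta$, and solutions with $|\lambda_n|^{1/2}\|\nabla u_n\|_{L^q}+|\lambda_n|\|u_n\|_{L^q}=1$ but $\|F_n\|_{L^q}\to0$, whence $(|\lambda_n|+1)\|u_n\|_{L^q(A')}\ge c>0$. If $|\lambda_n|\to\infty$, I rescale by $|\lambda_n|^{-1/2}$ about the concentration set and pass to a limiting Stokes resolvent problem on $\Rd$ or a half-space at $|\lambda|=1$, whose model estimate forces the rescaled solutions to vanish, a contradiction. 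If $|\lambda_n|$ stays bounded, then $\lambda_n\to\lambda_0$ with $|\lambda_0|\ge\delta$, the sequence is bounded in $W_0^{1,q}(\Omega;\Cd)$, and Rellich compactness on $A'$ gives a limit $(u_0,p_0)$ solving the homogeneous problem at $\lambda_0$; by uniqueness $u_0=0$, while strong convergence on $A'$ makes $(|\lambda_n|+1)\|u_n\|_{L^q(A')}\to(|\lambda_0|+1)\|u_0\|_{L^q(A')}=0$, again a contradiction. Hence \eqref{est-1} holds, the role of $\delta$ being only to keep $\lambda_0$ away from the origin.

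The uniqueness used above I establish independently by an energy argument: interior regularity makes $u$ smooth, and for $d\ge3$ the Sobolev inequality $\|u\|_{L^{q^*}(\Omega)}\le C\|\nabla u\|_{L^q(\Omega)}$ on $W_0^{1,q}$ of an exterior domain supplies enough decay to test the homogeneous equation against $\bar u$, reducing to the $L^2$ identity $\int_\Omega|\nabla u|^2+\lambda\int_\Omega|u|^2=0$ and using $\lambda\in\Sigma_\theta$; the case $d=2$ uses in addition $|\lambda|\ge\delta$. Existence then follows from \eqref{est-1} for both $q$ and $q'$ via the closed-range theorem—\eqref{est-1} gives injectivity with closed range, and the same estimate for the adjoint problem (the Stokes resolvent with $\bar\lambda$) gives density of the range—with the pressure recovered in $L^q_{\loc}(\overline\Omega;\C)$. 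Finally, for \eqref{est-2} with $d\ge3$ the additive constants are unnecessary: the homogeneous Sobolev inequality controls $\|u\|_{L^q(A')}$ by $\|\nabla u\|_{L^q(\Omega)}$ uniformly in $|\lambda|$, so the low-frequency limit $|\lambda|\to0$ is regular—the stationary exterior Stokes problem being solvable for $d\ge3$—and one obtains $|\lambda|\,\|u\|_{L^q(\Omega)}\le C\|F\|_{L^q(\Omega)}$ with $C$ independent of $\delta$; this mechanism fails for $d=2$ because of the Stokes paradox, which is precisely why \eqref{est-1} must retain its dependence on $\delta$.
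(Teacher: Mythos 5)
Your overall architecture matches the paper's: localize near the obstacle and near infinity, reduce to the bounded $C^1$ domain and the whole-space model, then remove the localization error by a compactness/contradiction argument. But there is a genuine gap at the crux, and it is created by your choice of Bogovskii correction. Because you correct the divergence with $\mathcal{B}(g)$ and the term $\lambda\,\mathcal{B}(g)$ lands in the right-hand side, your localized estimate carries the error $(|\lambda|+1)\|u\|_{L^q(A')}$ --- a \emph{full-strength} $L^q$ norm with the same power of $|\lambda|$ as the left side. You correctly note it cannot be absorbed, but your contradiction argument then fails exactly in the hard regime $|\lambda_n|\to\infty$: there you only know $\lambda_n u_n \rightharpoonup v$ weakly in $L^q$, and weak convergence does not make $|\lambda_n|\|u_n\|_{L^q(A')}$ tend to zero; no Rellich compactness is available for $\lambda_n u_n$, since the only gradient bound is $|\lambda_n|\|\nabla u_n\|_{L^q}\le |\lambda_n|^{1/2}$, which blows up. Your one-sentence rescue --- rescale by $|\lambda_n|^{-1/2}$ and invoke the model estimate on $\Rd$ or a half-space --- does not survive scrutiny: the rescaled domain $|\lambda_n|^{1/2}\Omega$ is neither $\Rd$ nor a graph domain (invoking a uniform resolvent bound on it is circular), and the concentration region $|\lambda_n|^{1/2}A'$ expands, so the mass bounded below by $c>0$ need not localize and the contradiction quantity does not pass to the blow-up limit. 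The paper avoids all of this by arranging the localization so the error appears in a \emph{negative-order} norm, $|\lambda|\,\|u\|_{W_0^{-1,q}(\Omega\cap 2B_0)}$: instead of a Bogovskii correction it keeps $\mathrm{div}(u\varphi)=u\cdot\nabla\varphi$ as inhomogeneous divergence data measured in $\hW^{-1,q}$ (Lemmas \ref{Lemma-B-1} and Remark \ref{re-B-1}), which is why Theorems \ref{R-thm-1} and \ref{thm-G} are proved with a divergence datum $g$ in the first place. Then, when $|\lambda^\ell|\to\infty$, the weak $L^q$ limit $v$ of $\lambda^\ell u^\ell$ is identified as $\nabla\phi$ with $\phi$ harmonic and satisfying a Neumann condition, and is killed by Lemma \ref{re-harmonic}; weak convergence to $0$ in $L^q$ then upgrades to \emph{strong} convergence in $W^{-1,q}$ on the bounded set by compact embedding, which is what closes the argument. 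This negative-norm mechanism is the missing idea in your proposal.

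Two secondary points. First, your uniqueness argument (testing the homogeneous equation against $\overline{u}$ using Sobolev decay) is not directly available for $q\neq 2$: a $W^{1,q}_0$ solution need not have $|\nabla u|^2$ integrable for either $q>2$ or $q<2$. The paper instead shows, for $q\ge 2$, that after cutting off near infinity the right-hand side has compact support, so whole-space regularity (Remark \ref{re-R-u}) puts the solution in $W^{1,2}_0(\Omega;\Cd)$, reducing to $L^2$ uniqueness; the range $1<q<2$ is handled by duality. Second, your treatment of \eqref{est-2} with $C$ independent of $\delta$ is too brief: the low-frequency compactness argument requires the stationary gradient estimate $\|\nabla u\|_{L^s(\Omega)}\le C\{\|F\|_{L^q}+\|\lambda u\|_{L^q}\}$ with $\frac1s=\frac1q-\frac1d$ (Theorem \ref{thm-A-2}) together with the decay-based uniqueness of Lemma \ref{Lemma-U}, and these force the restriction $1<q<d/2$ (two Sobolev steps); the full range then needs duality for $q>\frac{d}{d-2}$ and, when $d=3$, Riesz--Thorin interpolation for the middle range --- none of which appears in your sketch, and your claim that the homogeneous Sobolev inequality alone controls $\|u\|_{L^q(A')}$ uniformly in $\lambda$ is circular, since it routes through $\|\nabla u\|_{L^q(\Omega)}$, whose bound from \eqref{est-1} degenerates as $\delta\to 0$.
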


Resolvent estimates for the Stokes operator play an essential role in the functional analytic approach of Fujita and Kato \cite {Kato-1964} 
 to the nonlinear  Navier-Stokes equations. 
 The resolvent estimate \eqref{est-2}   in  domains with smooth boundaries 
 has been studied extensively since 1980's.
 Under the assumption that $\Omega$ is a bounded or exterior domain  with $C^{1, 1}$ boundary, the estimate \eqref{est-2}
 holds  for any $1< q<\infty$ \cite{Solo-1977, Giga-1981, Sohr-1987,  Sohr-1994}.
 We refer the reader to \cite{Sohr-1994} for a review as well as a comprehensive list of references in the case of smooth domains.
 The recent work in this area focuses on domains with nonsmooth boundaries. 
 If $\Omega$ is merely a bounded Lipschitz domain,
 it was proved by one of the present  authors \cite{Shen-2012} that the resolvent estimate \eqref{est-2}  holds  if  $d\ge 3$ and 
 \begin{equation}\label{Lip}
 \left| \frac{1}{q}-\frac12 \right| < \frac{1}{2d} +\e,
 \end{equation}
 where $\e>0$ depends on $\Omega$.
 In particular, 
 in  the case $d=3$, this shows that  the  estimate  \eqref{est-2} holds   for $(3/2)-\e < q< 3+\e$
 and gives an affirmative answer to a conjecture of M. Taylor \cite{Taylor-2000}.
 For a two-dimensional bounded Lipschitz domain, F. Gabel and P. Tolksdorf \cite{Tolksdorf-2022} were able to
 establish  the  resolvent estimate 
 \eqref{est-2}  for 
 $(4/3)-\e< q< 4+\e$.
   It is not known whether the range in \eqref{Lip} is sharp for Lipschitz domains. 
 In \cite{Deuring-2001} P. Deuring  constructed an interesting example of an unbounded Lipschitz domain  for which the resolvent 
 estimate fails for large $q$.
 For   related work on the Stokes and
  Navier-Stokes equations in Lipschitz or $C^1$ domains,
 we refer to the reader to \cite{FKV-1988, Brown-S-1995, Mitrea-2004, Mitrea-2008,  Mitrea-2009, Mitrea-2012, Kilty-2015, Tolksdorf-2018, Tolksdorf-2020} 
 
 The main contribution of this paper lies  in the smoothness assumption for  the domain $\Omega$.
 We are able to establish the resolvent estimates for the full range $1< q< \infty$ under the assumption 
 that $\partial \Omega$ is $C^1$. In view of  the example by P. Deuring \cite{Deuring-2001}, this assumption is more or less
 optimal. As we mentioned earlier, the  full range is  known previously  for $C^{1,1}$ domains \cite{Sohr-1994}. 
 A recent result of D. Breit \cite{Breit-2022} implies  the resolvent estimates for a three-dimensional Lipschitz domain satisfying certain 
 Besov-type conditions, which are 
 weaker than $C^{1, 1}$ and somewhat close to $C^{1, \alpha}$ for certain  $\alpha>0$. 
 Note  that in the case of smooth domains, in addition to  the $L^q$ estimates for $u$ and $\nabla u$ in  \eqref{est-0} and \eqref{est-1},
  one also obtains  an estimate for $\nabla^2 u$,
 \begin{equation}\label{est-3}
 \| \nabla ^2 u \|_{L^q(\Omega)} \le C \| F \|_{L^q(\Omega)},
 \end{equation}
 for $1<q< \infty$,  if $\Omega$ is bounded (some restrictions on $q$ are needed if $\Omega$ is an exterior domain; see \cite{Sohr-1994}).
 However,  such $W^{2, q}$ estimates fail in $C^1$ domains, even for the Laplace operator.
 
 Let $C^\infty_{0, \sigma}(\Omega)=\left\{ u \in C_0^\infty(\Omega; \Cd): \text{\rm div}(u)=0 \right\}$
and
\begin{equation}\label{div}
L^q_\sigma (\Omega)
= \text{\rm the closure of } C_{0, \sigma}^\infty (\Omega) \text{ in } L^q (\Omega;  \Cd).
\end{equation}
For $1< q<\infty$, we define the Stokes operator $A_q $ in $L^q_\sigma (\Omega)$ by
\begin{equation}\label{S-op}
A_q (u) = -\Delta u +\nabla p,
\end{equation}
with the domain
\begin{equation}\label{Domain}
\aligned 
\mathcal{D}(A_q)
=\Big\{ u\in W^{1, q}_0(\Omega; \Cd): & \  \text{\rm div}(u)=0 \text{ in } \Omega \text{ and } \\
& -\Delta u +\nabla p \in L^q_\sigma (\Omega) \text{ for some } p \in L_{\loc}^q(\overline{\Omega}; \C) \Big\}.
\endaligned
\end{equation}
It  follows from Theorems \ref{main-1} and \ref{main-2}  that for $\lambda\in \Sigma_\theta$ and $1< q< \infty$, the inverse operator 
$(\lambda + A_q)^{-1}$ exists as a bounded operator on $L^q_\sigma (\Omega)$. Moreover, 
the estimate, 
\begin{equation}\label{re-est} 
\| (\lambda + A_q)^{-1} F  \|_{L^q(\Omega)}  \le  C |\lambda|^{-1} \| F \|_{L^q(\Omega)},
\end{equation}
holds, where $C$ depends only on $d$, $q$, $\theta$ and $\Omega$, if $\Omega$ is a bounded $C^1$ domain  in $\R^d$, $d\ge 2$ or 
an exterior $C^1$ domain in $\R^d$, $d\ge 3$.
As a corollary, we obtain the following.

\begin{cor}\label{main-3}
Let $\Omega$ be a bounded $C^1$ domain in $\Rd$, $d\ge 2$  or an exterior $C^1$ domain in $\R^d$, $d\ge 3$.
Then the Stokes operator $-A_q$ generates a uniformly  bounded analytic semigroup $\{ e^{-tA_q} \}_{t\ge 0} $ in 
$L^q_\sigma (\Omega)$ for $1< q< \infty$.
\end{cor}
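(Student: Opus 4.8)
The plan is to deduce Corollary \ref{main-3} from the standard sectorial characterization of generators of bounded analytic semigroups, using the resolvent estimate \eqref{re-est} that the excerpt has already extracted from Theorems \ref{main-1} and \ref{main-2}. Recall that a densely defined closed operator $B$ on a Banach space $X$ generates a uniformly bounded analytic semigroup precisely when there exist $\omega\in(\pi/2,\pi)$ and $M>0$ such that the open sector $\{\lambda\in\C\setminus\{0\}:|\arg\lambda|<\omega\}$ lies in the resolvent set of $B$ and $\|(\lambda-B)^{-1}\|\le M/|\lambda|$ throughout that sector. I would apply this with $B=-A_q$ and $X=L^q_\sigma(\Omega)$, so that $(\lambda-B)^{-1}=(\lambda+A_q)^{-1}$, and with $\omega=\pi-\theta$.

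First I would record the structural facts. Since $C^\infty_{0,\sigma}(\Omega)\subset\mathcal D(A_q)$ (take $p=0$ and note that $-\Delta u$ is again smooth, compactly supported and divergence free for $u\in C^\infty_{0,\sigma}(\Omega)$), and $L^q_\sigma(\Omega)$ is by \eqref{div} the closure of $C^\infty_{0,\sigma}(\Omega)$, the operator $A_q$ is densely defined. For a given $\lambda\in\Sigma_\theta$, Theorems \ref{main-1} and \ref{main-2} produce a unique $(u,p)$ solving \eqref{eq-0}; since $u\in W^{1,q}_0(\Omega;\Cd)$ is divergence free, a standard approximation argument places it in $L^q_\sigma(\Omega)$, and rewriting \eqref{eq-0} as $A_q u=F-\lambda u\in L^q_\sigma(\Omega)$ shows $u\in\mathcal D(A_q)$ with $(\lambda+A_q)u=F$. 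The uniqueness clause then makes $\lambda+A_q$ injective, so $(\lambda+A_q)^{-1}$ is a well-defined bounded operator on $L^q_\sigma(\Omega)$; being the inverse of a bounded injective map it is closed, whence $A_q$ itself is closed.

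Next I would verify the two quantitative hypotheses. The sector condition is automatic: by \eqref{Sigma} the region $\Sigma_\theta=\{|\arg\lambda|<\pi-\theta\}$ is a sector of half-angle $\pi-\theta>\pi/2$, and the previous step shows it lies in the resolvent set of $-A_q$. The resolvent bound is exactly \eqref{re-est}, obtained directly from the theorems: in the bounded case the inequality $(|\lambda|+1)\|u\|_{L^q}\le C\|F\|_{L^q}$ gives $\|u\|_{L^q}\le C|\lambda|^{-1}\|F\|_{L^q}$ at once, while in the exterior case with $d\ge 3$ the $\delta$-independent estimate \eqref{est-2} yields $|\lambda|\,\|u\|_{L^q}\le C\|F\|_{L^q}$ for every $\lambda\in\Sigma_\theta\setminus\{0\}$ upon taking $\delta=|\lambda|$. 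Feeding $\|(\lambda+A_q)^{-1}\|\le C/|\lambda|$ into the characterization produces the analytic semigroup, and uniform boundedness follows from the representation $e^{-tA_q}=\frac{1}{2\pi i}\int_\Gamma e^{t\lambda}(\lambda+A_q)^{-1}\,d\lambda$ over a sectorial contour $\Gamma$ together with the scale invariance of the $C/|\lambda|$ bound.

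The only genuinely delicate point, as opposed to the abstract machinery, is ensuring the resolvent bound holds down to $\lambda=0$ on the sector, since \emph{uniform} boundedness of the semigroup (rather than boundedness merely on bounded time intervals) hinges on the $C/|\lambda|$ estimate being valid for arbitrarily small $|\lambda|$. This is precisely where the restriction to $d\ge 3$ for exterior domains enters: the $\delta$-dependent estimate \eqref{est-1} degenerates as $|\lambda|\to 0$, and it is only the $\delta$-independent estimate \eqref{est-2}, available for $d\ge 3$, that supplies a uniform bound near the origin. For bounded domains no such issue arises, because the factor $(|\lambda|+1)$ in \eqref{est-0} already dominates $|\lambda|$ uniformly on the whole sector.
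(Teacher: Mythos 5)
Your proposal is correct and follows essentially the same route as the paper: the paper proves Corollary \ref{main-3} by exactly this argument, deriving the uniform resolvent bound \eqref{re-est} from the estimate \eqref{est-0} in the bounded case and from the $\delta$-independent estimate \eqref{est-2} (hence $d\ge 3$) in the exterior case, and then invoking the standard sectorial characterization of generators of bounded analytic semigroups on the sector $\Sigma_\theta$ of half-angle $\pi-\theta>\pi/2$. Your verification of the structural hypotheses (density of $\mathcal{D}(A_q)$, closedness via the bounded inverse, injectivity and surjectivity of $\lambda+A_q$ from the uniqueness and existence clauses of Theorems \ref{main-1} and \ref{main-2}) correctly fills in details the paper leaves implicit.
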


The uniform boundedness of the semigroup in the case of two-dimensional exterior $C^1$ domains is left open by Corollary \ref{main-3}.
We note that the uniform boundedness for the two-dimensional exterior $C^2$ domains was established in \cite{Borchers-two} by 
using the method of layer potentials for $\lambda$ near $0$.

 We now describe our approach to Theorems \ref{main-1} and \ref{main-2}, which is based on a perturbation argument of R. Farwig and
 H. Sohr  \cite{Sohr-1994}.
 The basic idea is to work out first  the cases of the whole space $\R^d$ and the half-space $\Rdp$.
 One then uses a perturbation argument to treat the case of a region above a graph,
 $$
 \H_\psi =\left\{ (x^\prime, x_d) \in \R^d: \ x^\prime \in \R^{d-1} \text{  and } x_d > \psi (x^\prime) \right\},
 $$
 where $\psi: \R^{d-1} \to \R$.
 Finally, a localization procedure, together with some compactness argument,  is performed  to handle  the cases of bounded or exterior domains.
 To establish the resolvent estimates for $C^1$ domains, the key step  is to carry out the perturbation argument 
 under the assumption that $\psi: \R^{d-1} \to \R$ is Lipschitz continuous
  and to show that the  error terms are  bounded by  the Lipschitz norm $\|\nabla^\prime \psi \|_\infty$,
  where $\nabla^\prime$ denotes the gradient with respect to $x^\prime =(x_1, \dots, x_{d-1})$.
 
 To this end, we consider   a more general Stokes resolvent problem,
 \begin{equation}\label{eq-g}
 \left\{
 \aligned
 -\Delta u +\nabla p +\lambda u & = F +\text{\rm div} (f),\\
 \text{ \rm div} (u) & =g,
 \endaligned
 \right.
 \end{equation}
 in $\H_\psi$ with the boundary condition $u =0$ on $\partial\H_\psi$,  
 where $F\in L^q(\H_\psi; \Cd)$ and $f\in L^q(\H_\psi; \Cdd)$.
 We introduce two Banach spaces,
 \begin{equation}\label{XY}
 X_\psi^q = W^{1, q}_0(\H_\psi ; \Cd) \times A_\psi^q \quad
 \text{ and } \quad
 Y_\psi^q = W^{-1, q}(\H_\psi; \Cd)\times B_\psi^q,
 \end{equation}
 where $A_\psi^q$ and $B_\psi^q$ are two spaces  defined by \eqref{AB}. 
 In  comparison with the spaces used in \cite{Sohr-1994} for $C^{1, 1}$ domains,
 we point out that since we work with $C^1$ domains, no $W^{2, q}$ spaces can be used.
Note that  the scaling-invariant property  of the Lipschitz norm $\|\nabla^\prime \psi\|_\infty$ allows  us to 
 fix $\lambda\in \Sigma_\theta$ with $|\lambda|=1$. Consider the linear operator
 \begin{equation}\label{S00}
 S_\psi^\lambda (u, p)  = (-\Delta u +\nabla p +\lambda u, \text{\rm div} (u) ).
 \end{equation}
 We are able to show that $S_\psi^\lambda: X_\psi^q \to Y_\psi^q $ is a bijection and that 
 \begin{equation}\label{S-0}
 \| (S_\psi^\lambda)^{-1}  \|_{Y_\psi^q \to X_\psi^q} \le C(d, q, \theta)
 \end{equation}
 for $1< q<\infty$, provided that  $\|\nabla^\prime \psi  \|_\infty\le  c_0$ and $c_0=c_0(d, q, \theta)>0$  is sufficiently small.
 See Theorem \ref{G-thm-1}.
 To prove \eqref{S-0}, one first considers the special case $\psi=0$;  i.e., $\H_\psi = \Rdp$.
 The general case follows from the facts that 
 \begin{equation}\label{per-0}
 S_\psi^\lambda (u, p) = S_0^\lambda (\wu, \wp) \circ \Psi + R(\wu, \wp)\circ \Psi, 
 \end{equation}
 and that the operator norm of the second term in the right-hand side of \eqref{per-0}  is bounded by  $ C \|\nabla^\prime \psi \|_\infty$
 if $\| \nabla^\prime \psi  \|_\infty \le 1$.
 As a by-product, we also obtain the resolvent estimate \eqref{est-1} in the case $\Omega=\H_\psi$ if
 $\|\nabla^\prime  \psi \|_\infty \le  c_0 (d, q, \theta)$.
 See Theorem \ref{thm-G}.
 
 The paper is organized as follows.
We start with the case of the whole space $\R^d$  in  Section \ref{section-R}.
The case $\Omega=\Rdp$ is studied  in Section \ref{section-H}.
In Section \ref{section-G} we carry out the perturbation argument described above for the region above a Lipschitz graph.
In Section \ref{section-B} we consider the case of bounded $C^1$ domains and give the proof of Theorem \ref{main-1}.
The case of exterior $C^1$  domains is studied in Section \ref{section-E}, where Theorem \ref{main-2}  is proved. 
Finally, we  prove some useful uniqueness and  regularity results for exterior  $C^1$ domains in the Appendix. 
 
 We end this section with a few notations  that will be  used throughout the paper.
 Let $\Omega$ be a (bounded or unbounded) domain in $\R^d$. 
  By $u \in L^q_{\loc} (\overline{\Omega}; \C^m)$  we mean $u \in L^q(B\cap \Omega; \C^m)$ for any ball $B$ in $\Rd$.
 For $1<q<\infty$, let
 \begin{equation}\label{W}
 W^{1, q}(\Omega; \C^m) 
 =\left\{  u \in L^q(\Omega; \C^m): \ \nabla u \in L^q(\Omega; \C^{d \times m}) \right\}
 \end{equation}
 be the usual Sobolev space  in $\Omega$ for functions with values in $\C^m$.
 By $W^{1, q}_0(\Omega; \C^m)$ we denote the closure of $C_0^\infty(\Omega; \C^m)$ in $W^{1, q}(\Omega; \C^m)$.
 We use $W^{-1, q}(\Omega; \C^m)$ to denote the dual of $W^{1, q^\prime}_0(\Omega; \C^m)$ and
 $W^{-1, q}_0(\Omega; \C^m)$ the dual of $W^{1, q^\prime}(\Omega; \C^m)$, where
 $q^\prime =\frac{q}{q-1}$.
 For $1< q<\infty$, we  let
\begin{equation}\label{hW}
\hW^{1, q} (\Omega; \C^m)=\left\{ u \in L^q_{\loc} (\overline{\Omega} ; \C^m): \ \nabla u \in L^q(\Omega; \C^{d \times m} ) \right\}
\end{equation}
denote the homogeneous $W^{1, q}$ space  with the norm $\| \nabla u\|_{L^q(\Omega)}$.
As usual, we identify two functions  in $ \hW^{1, q} (\Omega; \C^m)$ if they differ by a constant.
Let $\hW^{-1, q}(\Omega; \C^m)$ be the dual of $\hW^{1, q^\prime}(\Omega; \C^m )$.
Elements $\Lambda$  in $\hW^{-1, q}(\Omega; \C^m)$ may be represented by $\text{\rm div} (f)$,
where $f=(f_{jk}) \in L^q(\Omega; \C^{d \times m} )$,  in the sense that
$$
\Lambda (u) = - \int_\Omega \partial_j u_k \cdot f_{jk}
$$
for any $u=(u_1, \dots, u_m)  \in \hW^{1, q^\prime }(\Omega; \C^m)$,
where $\partial_j =\partial /\partial x_j$,  the index  $j$ is summed from $1$ to $d$ and $k$ from $1$ to $m$.

\medskip

\noindent{\bf Acknowledgement.}
The authors thank the anonymous referees for their helpful comments that improved the quality of the manuscript.


\section{The whole space}\label{section-R}

In this section we study  the resolvent problem for the Stokes equations in $\Rd$, $d\ge 2$.
The results in Theorem \ref{R-thm-1}  are more or less standard.
Since  the Stokes equations  are considered with a  more general  data set, 
we provide a proof for the reader's convenience.

\begin{thm}\label{R-thm-1} 
Let  $1<q<\infty$ and $\lambda\in \Sigma_\theta $.
For any $F\in L^q(\Rd; \Cd)$,  $f\in L^q(\Rd; \Cdd)$, and $g\in L^q(\Rd; \C)\cap \hW^{-1, q}(\Rd; \C)$,
there exists  a unique $u\in W^{1, q}(\Rd; \Cd) $ such that 
\begin{equation}\label{R-eq}
\left\{
\aligned
-\Delta u +\nabla p +\lambda u & = F +\text{\rm div} (f) ,\\
\text{\rm div} (u) & =g 
\endaligned
\right.
\end{equation}
hold in $\Rd$ for some $p\in L^1_{\loc}(\Rd; \C)$ in the sense of distributions.
Moreover, the solution  satisfies the estimate,
\begin{equation}\label{R-est-1}
\left\{
\aligned
|\lambda |^{1/2} \| \nabla u \|_{L^q (\Rd)} 
 & \le C \left\{ \| F \|_{L^q(\Rd)} + |\lambda|^{1/2} \| f \|_{L^q(\Rd)} + |\lambda|^{1/2} \| g \|_{L^q(\Rd)} \right\}, \\
|\lambda|
\| u \|_{L^q(\Rd)}
& \le C \left\{   \| F \|_{L^q(\Rd)} + |\lambda|^{1/2} \| f \|_{L^q(\Rd)} +  |\lambda | \| g \|_{\hW^{-1, q} (\Rd)} \right\}, 
\endaligned
\right.
\end{equation}
and $p\in L^q(\Rd; \C) + \hW^{1, q} (\Rd; \C)$, 
where $C$ depends on $d$, $q$ and $\theta$.
\end{thm}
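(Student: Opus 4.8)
The plan is to solve the system \eqref{R-eq} explicitly by the Fourier transform and then read the estimates \eqref{R-est-1} off from bounds on the resulting symbols. Writing $\hat{\,\cdot\,}$ for the Fourier transform in $x$, the equations become the pointwise-in-$\xi$ system $(|\xi|^2+\lambda)\hat u_k + i\xi_k\hat p = \hat F_k + i\xi_j\hat f_{jk}$ and $i\xi_k\hat u_k = \hat g$. Taking the divergence of the first equation (contracting with $i\xi_k$) and inserting the constraint eliminates $\hat u$ and gives
\[
\hat p = \hat g + \frac{\lambda\hat g - i\xi_k\hat F_k + \xi_k\xi_j\hat f_{jk}}{|\xi|^2}, \qquad \hat u_k = \frac{1}{|\xi|^2+\lambda}\Bigl(\hat F_k + i\xi_j\hat f_{jk} - i\xi_k\hat p\Bigr).
\]
These formulas define a candidate $(u,p)$; one then verifies a posteriori that $u\in W^{1,q}(\Rd;\Cd)$, that $p\in L^1_{\loc}$, and that \eqref{R-eq} holds in the sense of distributions. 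The homogeneity in $|\lambda|$ recorded in \eqref{R-est-1} is the one forced by the scaling $x\mapsto|\lambda|^{1/2}x$, so the whole argument is uniform in the magnitude of $\lambda$.

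The estimates then reduce to the $L^q$-boundedness, uniformly in $\lambda\in\Sigma_\theta$, of a short list of Fourier multipliers. The essential analytic input is the sector inequality $\bigl||\xi|^2+\lambda\bigr|\ge c(\theta)(|\xi|^2+|\lambda|)$, valid because adding the nonnegative real number $|\xi|^2$ to $\lambda\in\Sigma_\theta$ keeps the sum inside the sector and away from the origin. Combining this with $|\xi|^2+|\lambda|\ge 2|\lambda|^{1/2}|\xi|$, each of the symbols
\[
\frac{\lambda}{|\xi|^2+\lambda}, \qquad \frac{\lambda^{1/2}\xi_j}{|\xi|^2+\lambda}, \qquad \frac{\xi_i\xi_j}{|\xi|^2+\lambda}, \qquad \frac{\xi_i\xi_j}{|\xi|^2}
\]
is bounded, and a direct computation shows each satisfies the Mikhlin--H\"ormander condition $|\partial^\alpha_\xi m(\xi)|\le C_\alpha|\xi|^{-|\alpha|}$ with $C_\alpha$ depending only on $d,\theta$; hence each defines a bounded operator on $L^q(\Rd)$ for $1<q<\infty$. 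Substituting the $\hat p$-formula into $\hat u$, the $F$-part of $u$ is $(\lambda-\Delta)^{-1}\mathbb{P}F$ with $\mathbb{P}$ the Helmholtz projection, so the first two symbols absorb the weights $|\lambda|$ and $|\lambda|^{1/2}|\xi|$ and produce the $\|F\|_{L^q}$ contributions; the $f$-part carries one extra factor of $\xi$, which lowers the admissible weight by $|\lambda|^{1/2}$ and gives the $|\lambda|^{1/2}\|f\|_{L^q}$ terms. The decisive simplification is that the $\hat g$-contribution to $\hat u_k$ collapses to the $\lambda$-free expression $-i\xi_k|\xi|^{-2}\hat g$: one derivative turns this into the Calder\'on--Zygmund symbol $\xi_m\xi_k|\xi|^{-2}$ acting on $g$, giving the $|\lambda|^{1/2}\|g\|_{L^q}$ term in the first line of \eqref{R-est-1}, while the undifferentiated operator maps $\hW^{-1,q}$ boundedly into $L^q$ (write $g=\mathrm{div}(G)$) and gives the $|\lambda|\,\|g\|_{\hW^{-1,q}}$ term in the second line.

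For the pressure, the $\hat p$-formula exhibits $p$ as the sum of $g$ and $\xi_k\xi_j|\xi|^{-2}\hat f_{jk}$, both in $L^q(\Rd)$ by the fourth symbol, together with $\lambda|\xi|^{-2}\hat g - i\xi_k|\xi|^{-2}\hat F_k$, whose gradient is a Calder\'on--Zygmund transform of $(G,F)$ and therefore lies in $L^q$; this yields $p\in L^q(\Rd;\C)+\hW^{1,q}(\Rd;\C)$. Uniqueness of $u$ follows from a short Fourier argument: if $F=f=g=0$, then $\xi\cdot\hat u=0$ and $(|\xi|^2+\lambda)\hat u=-i\xi\hat r$, where $\nabla r=\Delta u-\lambda u\in\mathcal{S}'$; contracting with $\xi$ gives $|\xi|^2\hat r=0$, so $\hat r$ and hence $\hat u$ are supported at the origin, forcing $u$ to be a polynomial, which must vanish since $u\in W^{1,q}(\Rd)$.

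I expect the main obstacle to be the bookkeeping in the multiplier estimates rather than any single hard inequality: one must verify the Mikhlin condition to all orders for the whole family of symbols and, more delicately, keep the constants uniform as $\arg(\lambda)\to\pm(\pi-\theta)$, where $|\xi|^2+\lambda$ comes closest to vanishing and the sector inequality is precisely what pins the constants down in terms of $\theta$ alone. A secondary technical point is justifying that the multiplier formulas, defined a priori on Schwartz data, extend to the stated data classes and deliver a genuine distributional solution with $p\in L^1_{\loc}$; this is where the low-frequency factor $|\xi|^{-2}$ in $\hat p$ must be handled, and it is the reason $g$ is measured in $\hW^{-1,q}$ and the pressure is claimed only modulo the splitting $L^q+\hW^{1,q}$.
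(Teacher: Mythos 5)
Your construction is correct, and for the existence and the estimates it is essentially the paper's own proof: your unified Fourier formulas are exactly the superposition of the paper's two cases. With $g=0$ your expression for $\hat u$ is the paper's \eqref{R-3} (the resolvent $(\lambda+|\xi|^2)^{-1}$ composed with the Helmholtz projection $\delta_{jk}-\xi_j\xi_k/|\xi|^2$), and your observation that the $g$-contribution to $\hat u_k$ collapses to the $\lambda$-free symbol $-i\xi_k|\xi|^{-2}\hat g$ reproduces precisely the paper's Case II solution $u=\nabla G$, $p=g-\lambda G$ with $\Delta G=g$; both proofs then conclude by Mikhlin, with the sector inequality $\big||\xi|^2+\lambda\big|\ge c(\theta)\,(|\xi|^2+|\lambda|)$ playing the role of the paper's observation $|\lambda+|\xi|^2|\approx 1+|\xi|^2$ after the normalization $|\lambda|=1$ by the same rescaling you cite. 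Where you genuinely diverge is uniqueness. The paper argues by duality: it tests the homogeneous equation against divergence-free $w\in W^{1,q'}(\Rd;\Cd)$, chooses $w$ to solve the dual resolvent problem with data $|u|^{q-2}\overline{u}$ (available from the existence step), and concludes $\int_{\Rd}|u|^q=0$. You instead run a Fourier-support (Liouville-type) argument, which is valid in $\Rd$ but has one step you should make explicit: the pressure is a priori only in $L^1_{\loc}(\Rd;\C)$, so before writing $\hat r$ you must justify that a primitive $r$ of $\Delta u-\lambda u\in\mathcal{S}'$ can be taken tempered --- equivalently, that a distribution all of whose first-order derivatives are tempered is itself tempered. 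This is true but not immediate and deserves a citation or a short proof. Granting it, your contraction giving $|\xi|^2\hat r=0$, the nonvanishing of $|\xi|^2+\lambda$ for $\lambda\in\Sigma_\theta$ (which excludes the closed negative real axis), and $u\in W^{1,q}(\Rd;\Cd)$ do force $u=0$. The tradeoff: your uniqueness argument is self-contained and does not invoke solvability of the dual problem at exponent $q'$, whereas the paper's duality argument, while needing Step 1 first, is domain-agnostic --- it is reused verbatim for the half-space and for bounded domains later in the paper, where Fourier-support reasoning is unavailable.
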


\begin{proof}

Step 1. We establish  the existence of the solution and the estimates in \eqref{R-est-1}.

By  rescaling we may assume $|\lambda|=1$.
By linearity, it suffices to consider two cases: (I) $g=0$; (II) $f=0$ and $F=0$.

Case I. Assume $g=0$.
  Let  $\mathcal{F}$ denote the Fourier transform defined by 
$$
\mathcal{F}(h) (\xi )=\int_{\Rd} e^{-i x\cdot \xi }  h(x)\, dx,
$$
where $i=\sqrt{-1}$ and $\xi \in \Rd$.
Let $u=(u_1, u_2, \dots, u_d)$, $F=(F_1, F_2, \dots, F_d) $  and $f=(f_{jk})$. 
By applying $\mathcal{F}$  to \eqref{R-eq} with $g=0$, we obtain 
\begin{equation}\label{R-2}
\left\{
\aligned
(|\xi|^2 +\lambda) \mathcal{F} (u_j) + i \xi_j  \mathcal{F}(p)  &  = \mathcal{F}(F_j) + i \xi_\ell \mathcal{F} (f_{\ell j} )  & \quad & \text{ in } \Rd,\\
\xi_\ell \mathcal{F} (u_\ell) & =0 & \quad & \text{ in } \Rd,
\endaligned
\right.
\end{equation}
where the repeated index $\ell$ is summed from $1$ to $d$.
A solution of \eqref{R-2} is given by 
\begin{equation}\label{R-3}
\left\{
\aligned
\mathcal{F}(u_j) & = (\lambda + |\xi|^2)^{-1} \left( \delta_{jk} -\frac{\xi_j \xi_k}{ |\xi|^2} \right) 
\left( \mathcal{F} (F_k) + i \xi_\ell  \mathcal{F}(f_{\ell k}) \right), \\
\mathcal{F} (p) & =  \frac{-i \xi_k}{|\xi|^2}
\left( \mathcal{F} (F_k) + i \xi_\ell  \mathcal{F}(f_{\ell k}) \right),
\endaligned
\right.
\end{equation}
where the repeated indices $k, \ell $ are summed from $1$ to $d$.
Since $\lambda\in \Sigma_\theta$ and $|\lambda|=1$, we have $  |\lambda + |\xi|^2 |  \approx 1+ |\xi|^2$.
Thus,  by the Mikhlin  multiplier theorem, there exist $u \in W^{1, q}(\Rd; \Cd)$ and $p \in L^q(\Rd; \C)
+ \hW^{1, q}(\Rd; \C)$, 
satisfying \eqref{R-eq} and
\begin{equation} 
 \|\nabla u \|_{L^q(\Rd) } + \| u \|_{L^q(\Rd)}
\le C\left\{  \| f \|_{L^q(\Rd) } + \| F \|_{L^q(\Rd)} \right\}, 
\end{equation}
for $1<q<\infty$, where $C$ depends on $d$, $q$ and $\theta$.

Case II. Assume that $F=0$ and $f=0$.
Since $g\in L^q(\Rd; \C) \cap \hW^{-1, q}(\Rd; \C)$,
there exists $G\in \hW^{1, q}(\Rd; \C)$ such that $\nabla G \in W^{1, q}(\Rd; \Cd)$,  $\Delta G =g $ in $\Rd$, 
$$
\| \nabla G \|_{L^q(\Rd)} \le C \| g \|_{\hW^{-1, q}(\Rd)}
\quad \text{ and } \quad
\| \nabla^2 G \|_{L^q (\Rd)} \le C \| g \|_{L^q(\Rd)}.
$$
Let $u=\nabla G$ and $p=g-\lambda G$. Then
$u \in W^{1, q} (\Rd; \Cd)$, $p\in L^q(\Rd; \C) + \hW^{1, q}(\Rd; \C)$, and $(u, p)$ satisfies \eqref{R-eq}
with $F=0$ and $f=0$.
Moreover,
$$
\| \nabla u \|_{L^q(\Rd)} \le C \| g \|_{L^q(\Rd)}
\quad
\text{ and } 
\quad
\| u \|_{L^q(\Rd)} \le C \| g \|_{\hW^{-1, q}(\Rd)}.
$$

Step 2. We establish the uniqueness of the solution.

 Let  $u\in W^{1, q} (\Rd; \Cd)$ be a solution of \eqref{R-eq} in $\Rd$ with 
$F=0$, $f=0$ and $g=0$.
It follows that  for any $w\in C_{0, \sigma}^\infty (\Rd)$, 
\begin{equation}\label{R-5}
\int_{\Rd} \nabla u \cdot \nabla w + \lambda \int_{\Rd} u \cdot w=0,
\end{equation}
where $C^\infty_{0, \sigma} (\Rd)= \{ w\in C_0^\infty(\Rd; \Cd): \text{\rm div} (w)=0 \text{ in } \Rd \}$. Since $u\in W^{1, q}(\Rd; \Cd)$, 
by a density argument, we deduce  that \eqref{R-5} holds for any $w\in W^{1,q^\prime}(\Rd; \Cd)$ with div$(w)=0$ in $\Rd$.
Let $w$ be a solution in $W^{1, q^\prime} (\Rd; \Cd)$  of the Stokes equations,
\begin{equation}\label{R-6}
\left\{
\aligned
-\Delta w +\nabla \phi + \lambda w & = |u|^{q-2} \overline{u},\\
\text{\rm div} (w) & =0 
\endaligned
\right.
\end{equation}
in $\Rd$, where $\overline{u}$ denotes the complex conjugate of $u$.
Since $|u|^{q-2} \overline{u} \in L^{q^\prime} (\Rd; \Cd)$, such solution exists in $W^{1, q^\prime}(\Rd; \Cd)$
by Step 1. Again by a density argument, we may deduce from \eqref{R-6} that
\begin{equation}\label{R-7} 
\int_{\Rd} \nabla w \cdot \nabla u + \lambda \int_{\Rd} w \cdot u 
=\int_{\Rd} |u|^q.
\end{equation}
In view of \eqref{R-5} and \eqref{R-7}, we obtain $\int_{\Rd} |u|^q=0$ and thus $u=0$ in $\Rd$.
\end{proof}

\begin{remark}\label{remark-R}
Let $F$, $f$, $g$, $(u, p)$ be the same as in Theorem \ref{R-thm-1}.
Let $F=(F_1,F_2,  \dots, F_d)$ and $f=(f_{jk})$. The $k$ component of  $\text{\rm div} (f) $ is given by
$\sum_j  \partial_j f_{jk} $, where $\partial_j$ denotes  $\partial/\partial x_j$.
Let $x=(x^\prime, x_d)$, where $x^\prime \in \mathbb{R}^{d-1}$.
Suppose that
\begin{equation}\label{ref}
\left\{
\aligned
& \text{ 
$F_j$ is even in $x_d$ for $1\le j \le d-1$ and $F_d$ is odd, }\\
& \text{
$g$ is even in $x_d$, }\\
&\text{ 
 $ f_{jk} $ is even in $x_d$  for $1\le j, k \le d-1$,}\\
  & \text{ $f_{dd} $ is even in $x_d$, }\\
& \text{
$f_{jd}$ and $f_{dj}$ are odd in $x_d$ for $1\le j \le d-1$.}\\
\endaligned
\right.
\end{equation}
Define
$$
\left\{
\aligned
v (x^\prime, x_d)  & = (u_1 (x^\prime, -x_d), \dots, u_{d-1} (x^\prime, -x_d), -u_d (x^\prime, -x_d)), \\
\phi  (x^\prime, x_d) & = p(x^\prime, -x_d).
\endaligned
\right.
$$
Then $(v, \phi  )$ is a solution of \eqref{R-eq} with the same data $F, f$ and $g$.
By the uniqueness in Theorem \ref{R-thm-1}, it follows that $u=v$ in $\Rd$.
In particular, this implies that $u_d (x^\prime, 0) =0$ for $x^\prime\in \mathbb{R}^{d-1}$.
\end{remark}

\begin{remark}\label{re-R-p}
Assume $\lambda\in \Sigma_\theta$ and $|\lambda|=1$.
Let $(u, p)$ be the solution of \eqref{R-eq}, given by Theorem \ref{R-thm-1}.
An inspection of the proof of Theorem \ref{R-thm-1}  shows that $p=p_1 +p_2$, where
$p_1\in L^q(\Rd; \C)$,  $p_2\in \hW^{1, q}(\Rd; \C)$, and
$$
\| p_1 \|_{L^q(\Rd)} + \|\nabla p_2\|_{L^q(\Rd)}
\le C \left\{ \| F \|_{L^q(\Rd)} + \| f \|_{L^q(\Rd)} + \| g \|_{L^q(\Rd)}
+ \| g \|_{\hW^{-1, q} (\Rd)} \right\}.
$$
The constant $C$ depends only on $d$, $q$ and $\theta$.
\end{remark}

\begin{remark}\label{re-R-u}
Let $1<q_1<q_2<\infty$ and $\lambda\in \Sigma_\theta $.
Suppose that  $F\in L^{q_j} (\Rd; \Cd)$, $f\in L^{q_j}  (\Rd; \Cdd)$ and $g\in L^{q_j} (\Rd; \C) \cap \hW^{-1, q_j}(\Rd; \C)$
for $j=1, 2$.
Let $(u^j, p^j)$ be the unique solution of \eqref{R-eq} in $W^{1, q_j}(\Rd; \Cd)\times (L^{q_j}(\Rd; \C) + \hW^{1, q_j}(\Rd; \C))$,
given by Theorem \ref{R-thm-1}.
Then $(u^1, p^1)=(u^2, p^2)$.
This  follows from the observation  that the solutions constructed in the proof do not depend on $q$. 
\end{remark}


\section{A half-space}\label{section-H}

In this section we consider the resolvent problem for the Stokes equations in the half-space $\Rdp$.
Recall that $\hW^{1, q}(\Rdp; \C)$ is  the homogeneous $W^{1, q}$ space in $\Rdp$ defined by \eqref{hW}, and 
$\hW^{-1, q}(\Rdp; \C)$ denotes  the dual of $\hW^{1, q^\prime} (\Rdp; \C)$.

\begin{thm}\label{H-thm-1}
Let $1< q< \infty$ and $\lambda\in \Sigma_\theta$.
Let $F\in L^q(\Rdp; \Cd)$, $f\in L^q(\Rdp; \Cdd)$, and $g\in L^q(\Rdp; \C)\cap \hW^{-1, q}(\Rdp; \C)$.
Then there exists    a unique  $u\in W_0^{1, q}(\Rdp; \Cd)$ such that 
\begin{equation}\label{H-eq}
\left\{
\aligned
-\Delta u+\nabla p +\lambda u & = F + \text{\rm div}(f), \\
\text{\rm div} (u) & =g 
\endaligned
\right.
\end{equation}
hold in $\Rdp$ for some $p\in L^1_{\loc} (\Rdp; \C)$  in the sense of distributions.
Moreover,  the  solution $u$  satisfies the estimate, 
\begin{equation}\label{H-est-1}
\aligned
& |\lambda |^{1/2} \| \nabla u \|_{L^q (\Rdp)} 
+|\lambda|
\| u \|_{L^q(\Rdp)}\\
 & \le C \left\{ \| F \|_{L^q(\Rdp)} + |\lambda|^{1/2} \| f \|_{L^q(\Rdp)} + |\lambda|^{1/2} \| g \|_{L^q(\Rdp)} 
 +  |\lambda | \| g \|_{\hW^{-1, q} (\Rdp)} \right\}, \\
\endaligned
\end{equation}
 and  $p \in L^q(\Rdp; \C ) + \hW^{1, q}(\Rdp; \C )$, 
where $C$ depends on $d$, $q$ and $\theta$.
\end{thm}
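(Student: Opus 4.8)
The natural strategy is to reduce the half-space problem to the whole-space result of Theorem \ref{R-thm-1} by a reflection argument, handling the inhomogeneous divergence constraint separately. First I would dispose of the constraint $\text{\rm div}(u)=g$. Exactly as in Case II of the proof of Theorem \ref{R-thm-1}, I would solve a Dirichlet problem for the Laplacian: since $g\in L^q(\Rdp;\C)\cap\hW^{-1,q}(\Rdp;\C)$, I would produce $G$ with $\Delta G=g$ in $\Rdp$ together with the estimates $\|\nabla G\|_{L^q}\le C\|g\|_{\hW^{-1,q}}$ and $\|\nabla^2 G\|_{L^q}\le C\|g\|_{L^q}$, and set $u^{(1)}=\nabla G$, $p^{(1)}=g-\lambda G$. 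The pair $(u^{(1)},p^{(1)})$ solves the system with the correct divergence but in general fails the boundary condition $u^{(1)}=0$ on $\partial\Rdp$. Subtracting it reduces the problem to the case $g=0$ with $u=0$ on the boundary, at the cost of modifying $F$ and $f$ by terms controlled by the right-hand side of \eqref{H-est-1}.

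With $g=0$ and a homogeneous Dirichlet condition, the plan is to build the solution by reflecting the data across $x_d=0$ and invoking the whole-space solution together with the symmetry observation in Remark \ref{remark-R}. Concretely, I would extend $F$ and $f$ from $\Rdp$ to all of $\Rd$ using the even/odd parity pattern prescribed in \eqref{ref}: $F_j$ extended evenly for $1\le j\le d-1$ and $F_d$ extended oddly, $f_{jk}$ and $f_{dd}$ evenly for $1\le j,k\le d-1$, and $f_{jd},f_{dj}$ oddly. These reflected extensions lie in $L^q(\Rd)$ with norms comparable to the original $L^q(\Rdp)$ norms. Applying Theorem \ref{R-thm-1} to the extended data yields a whole-space solution $(u,p)$, and Remark \ref{remark-R} guarantees that the parity forces $u_d(x',0)=0$; moreover the even/odd structure of the solution components should force the tangential components $u_j(x',0)$ to vanish as well, so the restriction of $u$ to $\Rdp$ satisfies the homogeneous Dirichlet condition and hence lies in $W_0^{1,q}(\Rdp;\Cd)$. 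The estimates in \eqref{H-est-1} for this piece then follow directly from \eqref{R-est-1} by restriction.

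For uniqueness I would mirror Step 2 of the proof of Theorem \ref{R-thm-1}. Given a solution with $F=f=g=0$, the weak formulation gives $\int_{\Rdp}\nabla u\cdot\nabla w+\lambda\int_{\Rdp}u\cdot w=0$ for divergence-free test fields $w$ vanishing on the boundary; testing against the solution of the adjoint half-space Stokes problem with datum $|u|^{q-2}\overline{u}$ yields $\int_{\Rdp}|u|^q=0$. This requires the existence half of the theorem in the dual exponent $q'$, which is already in hand, together with the density of $C^\infty_{0,\sigma}(\Rdp)$ in the appropriate solenoidal space to justify the duality pairing.

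The main obstacle I anticipate is verifying cleanly that the reflected whole-space solution actually vanishes on the boundary in all components, i.e.\ that the parity pattern \eqref{ref} is genuinely consistent with the Stokes system so that the even-component velocities also vanish at $x_d=0$. Remark \ref{remark-R} delivers $u_d(x',0)=0$, but the tangential Dirichlet condition needs the corresponding parity of $u_j$ for $1\le j\le d-1$; one must check from the explicit multiplier formulas \eqref{R-3}, or from the symmetry of the solution operator, that $u_j$ is indeed even in $x_d$ while its normal trace structure is compatible with vanishing on the hyperplane. A secondary subtlety is bookkeeping the pressure decomposition $p\in L^q+\hW^{1,q}$ after the two reductions, which should follow from Remark \ref{re-R-p} but requires tracking how the auxiliary piece $p^{(1)}=g-\lambda G$ splits between the $L^q$ and homogeneous parts.
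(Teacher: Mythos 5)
There is a genuine gap at the heart of your existence argument: the parity pattern \eqref{ref} forces the \emph{tangential} components $\wu_j$, $1\le j\le d-1$, of the whole-space solution to be \emph{even} in $x_d$, and an even function does not vanish at $x_d=0$ --- only an odd one does. Remark \ref{remark-R} therefore yields exactly $\wu_d(x^\prime,0)=0$ and nothing more; what the restriction of $\wu$ to $\Rdp$ satisfies is a Navier-slip-type condition ($\wu_d=0$ and, formally, $\partial_d \wu_j=0$ on the boundary by evenness), not the no-slip condition. Your anticipated ``check from the multiplier formulas \eqref{R-3} that $u_j$ is even while its trace vanishes'' would fail: both cannot hold for generic data. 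Nor can you repair this by extending the tangential components oddly instead, since the divergence constraint and the pressure coupling in \eqref{R-2} force precisely the pattern \eqref{ref}; reflection alone simply cannot produce the Dirichlet solution for the Stokes system, in contrast with the scalar resolvent equation.

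The missing ingredient --- and the actual core of the paper's proof --- is a boundary corrector. The paper subtracts $(\wu,\wp)$ from the sought solution and is left with the residual Dirichlet problem \eqref{H-1}: homogeneous Stokes resolvent system in $\Rdp$, $\operatorname{div}(u)=0$, with tangential boundary data $h_j=-\wu_j$ and $u_d=0$. This problem is then solved \emph{explicitly} by partial Fourier transform in $x^\prime$, via the formulas \eqref{H-1-1}--\eqref{H-1-1p} built from $m_0(s,x_d)=\bigl(e^{-\sqrt{\lambda+s^2}\,x_d}-e^{-sx_d}\bigr)/\bigl(\sqrt{\lambda+s^2}-s\bigr)$, and the estimate \eqref{H-2} is obtained from the anisotropic multiplier bounds of Lemma \ref{Lemma-HP} together with the trace inequality $\|\wu\|_{W^{1-\frac1q,q}(\R^{d-1})}\le C\|\wu\|_{W^{1,q}(\Rd)}$. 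None of this machinery appears in your proposal, so the existence half does not close. (Two smaller points: your preliminary reduction to $g=0$ via $\Delta G=g$ in $\Rdp$ is workable but unnecessary --- the paper absorbs $g$ into the whole-space step through its even extension $\widetilde g$, which automatically gives $\operatorname{div}(\wu)=g$ in $\Rdp$; and your uniqueness argument by duality does match Step 2 of the paper, modulo the fact that it presupposes the existence theory in the dual exponent, which in turn requires the corrector just described.)
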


Our proof of Theorem \ref{H-thm-1} follows closely a line of argument in \cite{Sohr-1994}.

For a function $h$ in $\R^{d-1}$, we use  $\widehat{h}$ to denote the Fourier transform of $h$, 
\begin{equation}\label{FT}
\widehat{h} (\xi^\prime )
=\int_{\R^{d-1}} e^{-i \xi^\prime \cdot x^\prime} h(x^\prime)\, dx^\prime,
\end{equation}
for $\xi^\prime\in \R^{d-1}$.

\begin{lemma}\label{H-Lemma}
Let $T$ be a bounded linear operator on $L^2(\mathbb{R}^{d-1}; \C^m)$.
Suppose that 
$
\widehat{Tf}(\xi^\prime)=m(\xi^\prime) \widehat{f} (\xi^\prime)
$
and  that the multiplier $m(\xi^\prime)$ satisfies  the estimate,
\begin{equation}\label{HM}
|\xi^\prime|^{|\alpha|} |D^\alpha  m(\xi^\prime)|\le M,
\end{equation}
for $|\alpha|\le \left[ \frac{d-1}{2} \right] +1$, where $\alpha=(\alpha_1, \dots, \alpha_{d-1} )$ and 
 $D^\alpha =\partial_1^{\alpha_1} \cdots \partial_{d-1}^{\alpha_{d-1}}$.
Then
$$
\| Tf \|_{L^q(\mathbb{R}^{d-1})} \le C M \| f \|_{L^q(\mathbb{R}^{d-1})}
$$
for $1< q< \infty$, where $C$ depends on $d$ and  $q$.
\end{lemma}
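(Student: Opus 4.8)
The plan is to recognize this statement as the classical H\"ormander--Mikhlin multiplier theorem on $\R^{d-1}$ and to prove it through Calder\'on--Zygmund theory, exploiting the fact that the number of derivatives $N:=[\frac{d-1}{2}]+1$ imposed in \eqref{HM} is precisely what is needed to pass from $L^2$ information to pointwise control of the convolution kernel. Since $T$ is already bounded on $L^2(\R^{d-1};\C^m)$ (with norm at most $M$, by Plancherel and the $\alpha=0$ case of \eqref{HM}), it suffices to verify that the kernel $K=\mathcal{F}^{-1}(m)$ satisfies the H\"ormander integral condition $\int_{|x'|>2|y'|}|K(x'-y')-K(x')|\,dx'\le CM$. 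The Calder\'on--Zygmund theorem then yields a weak type $(1,1)$ bound with constant $CM$, Marcinkiewicz interpolation gives $L^q$ boundedness for $1<q\le 2$, and a duality argument (the adjoint multiplier $\overline{m}$ obeys the same derivative bounds) covers $2\le q<\infty$.

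To control the kernel I would fix a Littlewood--Paley partition of unity $\sum_j \phi_j\equiv 1$ with $\phi_j$ supported in the annulus $|\xi'|\sim 2^j$, set $m_j=m\phi_j$ and $K_j=\mathcal{F}^{-1}(m_j)$. Leibniz's rule together with \eqref{HM} gives $|D^\alpha m_j(\xi')|\le CM 2^{-j|\alpha|}$ on the annulus for $|\alpha|\le N$, so by Plancherel $\||x'|^{|\alpha|}K_j\|_{L^2}\approx\|D^\alpha m_j\|_{L^2}\le CM 2^{-j|\alpha|}2^{j(d-1)/2}$. The key estimates are then obtained by Cauchy--Schwarz against the weight $(1+2^j|x'|)^{-N}$: since $2N>d-1$ this weight is integrable, and one gets $\|K_j\|_{L^1}\le CM$ and, from $\widehat{\nabla K_j}=i\xi'm_j$, also $\|\nabla K_j\|_{L^1}\le CM 2^j$, as well as the tail bound $\int_{|x'|>R}|K_j|\,dx'\le CM(2^jR)^{(d-1)/2-N}$.

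With these pieces in hand I would split the sum over $j$ at the frequency $2^j\sim 1/|y'|$. For the low frequencies $2^j|y'|\le 1$ the mean value estimate $|K_j(x'-y')-K_j(x')|\le|y'|\sup_t|\nabla K_j(x'-ty')|$ combined with $\|\nabla K_j\|_{L^1}\le CM2^j$ yields a contribution $\le CM|y'|\sum_{2^j\le 1/|y'|}2^j\le CM$; for the high frequencies $2^j|y'|>1$ the tail bound applied to each of the two terms on the region $|x'|>2|y'|$ gives a contribution $\le CM\sum_{2^j>1/|y'|}(2^j|y'|)^{(d-1)/2-N}\le CM$, the geometric series converging exactly because $N>(d-1)/2$. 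Summing over $j$ establishes the H\"ormander condition and completes the argument. I expect the main obstacle to be the bookkeeping in these dyadic kernel estimates --- in particular verifying that the threshold $N=[\frac{d-1}{2}]+1$ simultaneously makes the weight $(1+2^j|x'|)^{-N}$ integrable and the final geometric series summable --- whereas the reduction to the kernel condition and the interpolation and duality steps are routine.
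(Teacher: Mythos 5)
Your proposal is correct, but there is little in the paper to compare it against: the paper's entire proof of this lemma is the single sentence that it is the well-known Mikhlin multiplier theorem in $\R^{d-1}$, with no argument supplied. What you have written is precisely the standard Calder\'on--Zygmund/Littlewood--Paley proof of that cited theorem, so in effect you fill in the proof the paper delegates to the literature rather than taking a different route. The exponent bookkeeping you flag as the main risk does check out: with $N=\left[\frac{d-1}{2}\right]+1$ one has $2N>d-1$, so $\|(1+2^j|\cdot|)^{-N}\|_{L^2(\R^{d-1})}\approx 2^{-j(d-1)/2}$, and Cauchy--Schwarz against $\|(1+2^j|\cdot|)^{N}K_j\|_{L^2}\le CM\,2^{j(d-1)/2}$ (from Plancherel, Leibniz, and the measure $\approx 2^{j(d-1)}$ of the supporting annulus) yields $\|K_j\|_{L^1}\le CM$ and $\|\nabla K_j\|_{L^1}\le CM\,2^j$; likewise the tail exponent $\frac{d-1}{2}-N$ is strictly negative, so both geometric series in your split at $2^j\sim |y'|^{-1}$ converge, and the H\"ormander condition $\int_{|x'|>2|y'|}|K(x'-y')-K(x')|\,dx'\le CM$ follows. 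Together with the $L^2$ bound $\|T\|_{L^2\to L^2}\le M$ (the $\alpha=0$ case of \eqref{HM} plus Plancherel), Calder\'on--Zygmund theory, Marcinkiewicz interpolation, and duality via the adjoint multiplier $\overline{m}$ give the full range $1<q<\infty$, exactly as you say. Two routine points deserve a sentence in a complete write-up: the kernel $K$ is a priori only a tempered distribution, so the kernel estimates should be established for truncated sums $\sum_{|j|\le J}K_j$ with bounds uniform in $J$ (or one verifies that $\sum_j K_j$ converges absolutely off the origin, which your estimates already show), and the $\C^m$-valued setting of the lemma is handled by applying the scalar argument componentwise (or entrywise if $m$ is matrix-valued), since the hypothesis \eqref{HM} controls each entry.
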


\begin{proof}
This is the well known Mikhlin multiplier theorem in $\R^{d-1}$.
\end{proof}

We use $W^{1-\frac{1}{q}, q} (\R^{d-1}; \C^m)$ to denote  the trace space of $W^{1, q}(\Rdp; \C^m)$ on $\R^{d-1}$.

\begin{lemma}\label{Lemma-HP}
Let  $T $ be a bounded linear operator from  $L^2(\mathbb{R}^{d-1})$ to $L^2(\Rdp)$.
Suppose that 
$$
\widehat{T f}(\xi^\prime, x_d)=m (\xi^\prime, x_d)  \widehat{f}  (\xi^\prime)
$$
 and  that  $m (\xi^\prime, x_d)$ satisfies the condition
\begin{equation}\label{HM-1}
|\xi^\prime|^{|\alpha|} | D^\alpha  m(\xi^\prime, x_d)|
+ |\xi^\prime |^{|\alpha|-1} |D^\alpha \partial_d m(\xi^\prime, x_d)|
\le \frac{M_0 e^{-\delta |\xi^\prime| x_d}}{1+x_d}
\end{equation}
for $x_d>0$, $\xi^\prime\in \mathbb{R}^{d-1}$ and $|\alpha|\le \left[ \frac{d-1}{2} \right] +1$, where $\delta >0$.
Then
\begin{equation}\label{HP-2}
\left\{
\aligned
\| T f \|_{L^{ q} (\Rdp)}
 & \le C  \| f \|_{L^ q(\mathbb{R}^{d-1})}, \\
 \| \nabla T(f) \|_{L^q (\Rdp)}
  & \le C \| f \|_{W^{1-\frac{1}{q}, q} (\R^{d-1})},
 \endaligned
 \right.
\end{equation}
for $1<q<\infty$, where $C$ depends on $d$, $q$, $\delta$ and $M_0$.
\end{lemma}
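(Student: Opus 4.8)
The plan is to prove the two inequalities in \eqref{HP-2} by the same basic device: freeze the normal variable $x_d=t$, regard $f\mapsto (Tf)(\cdot,t)$ as a Fourier multiplier operator on $\mathbb{R}^{d-1}$ with symbol $m(\cdot,t)$, apply Lemma \ref{H-Lemma} slicewise, and then integrate the resulting bound in $t$. For the first inequality this is immediate: using $e^{-\delta|\xi^\prime|t}\le 1$, the hypothesis \eqref{HM-1} shows that for each fixed $t>0$ the multiplier $m(\cdot,t)$ satisfies the Mikhlin condition \eqref{HM} with constant $M=M_0/(1+t)$, so Lemma \ref{H-Lemma} gives $\|(Tf)(\cdot,t)\|_{L^q(\mathbb{R}^{d-1})}\le CM_0(1+t)^{-1}\|f\|_{L^q(\mathbb{R}^{d-1})}$. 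Raising to the power $q$ and integrating in $t$, and noting that $\int_0^\infty (1+t)^{-q}\,dt=(q-1)^{-1}<\infty$ for $q>1$, yields the first estimate. (All computations are carried out first for $f\in L^2\cap L^q$ and then extended by density.)

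For the gradient estimate I would first reduce the two kinds of first-order derivatives to a single statement. Since $\widehat{\partial_j Tf}(\xi^\prime,x_d)=i\xi_j\,m(\xi^\prime,x_d)\widehat f(\xi^\prime)$ for $1\le j\le d-1$ and $\widehat{\partial_d Tf}(\xi^\prime,x_d)=\partial_d m(\xi^\prime,x_d)\widehat f(\xi^\prime)$, a short Leibniz computation shows that both families of symbols $n=i\xi_j m$ and $n=\partial_d m$ satisfy the common ``order one'' bound
\begin{equation*}
|\xi^\prime|^{|\alpha|}\,|D^\alpha n(\xi^\prime,x_d)|\le \frac{CM_0\,|\xi^\prime|\,e^{-\delta|\xi^\prime|x_d}}{1+x_d},\qquad |\alpha|\le\left[\tfrac{d-1}{2}\right]+1.
\end{equation*}
(For the tangential symbols one uses \eqref{HM-1} with multi-indices $\alpha$ and $\alpha-e_j$; for the normal symbol it is \eqref{HM-1} itself after multiplying through by $|\xi^\prime|$.) Thus it suffices to show that any operator $S$ with $\widehat{Sf}(\xi^\prime,x_d)=n(\xi^\prime,x_d)\widehat f(\xi^\prime)$, where $n$ obeys the displayed bound, maps $W^{1-\frac1q,q}(\mathbb{R}^{d-1})$ boundedly into $L^q(\Rdp)$.

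To prove this reduced claim I would compare $n$ with the Poisson-type symbol $|\xi^\prime|e^{-(\delta/2)|\xi^\prime|x_d}$, the symbol of $|\nabla^\prime|e^{-(\delta/2)x_d|\nabla^\prime|}$. Writing $n(\xi^\prime,x_d)=\rho(\xi^\prime,x_d)\cdot|\xi^\prime|e^{-(\delta/2)|\xi^\prime|x_d}$ with $\rho=n\,|\xi^\prime|^{-1}e^{(\delta/2)|\xi^\prime|x_d}$, a Fa\`a-di-Bruno estimate for the derivatives of $e^{(\delta/2)|\xi^\prime|x_d}$ combined with the displayed order-one bound gives $|\xi^\prime|^{|\alpha|}|D^\alpha\rho|\le CM_0(1+x_d)^{-1}(1+x_d|\xi^\prime|)^{|\alpha|}e^{-(\delta/2)|\xi^\prime|x_d}\le CM_0(1+x_d)^{-1}$ for $|\alpha|\le[\frac{d-1}2]+1$, since $s\mapsto(1+s)^{|\alpha|}e^{-(\delta/2)s}$ is bounded on $[0,\infty)$. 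Hence, by Lemma \ref{H-Lemma}, the frozen operator with symbol $\rho(\cdot,x_d)$ is bounded on $L^q(\mathbb{R}^{d-1})$ with norm $\le CM_0(1+x_d)^{-1}$, so that $\|(Sf)(\cdot,t)\|_{L^q(\mathbb{R}^{d-1})}\le CM_0(1+t)^{-1}\bigl\||\nabla^\prime|e^{-(\delta/2)t|\nabla^\prime|}f\bigr\|_{L^q(\mathbb{R}^{d-1})}$. Raising to the power $q$, integrating in $t$, bounding $(1+t)^{-q}\le 1$, and finally invoking the classical Poisson-semigroup characterization of the trace space, $\int_0^\infty\||\nabla^\prime|e^{-t|\nabla^\prime|}f\|_{L^q(\mathbb{R}^{d-1})}^q\,dt\le C\|f\|_{W^{1-\frac1q,q}(\mathbb{R}^{d-1})}^q$ (after the harmless rescaling $t\mapsto t/\delta$), completes the proof of \eqref{HP-2}.

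The main obstacle is precisely the behaviour near $x_d=0$. The naive slicewise Mikhlin bound for a first derivative yields only $\|\nabla(Tf)(\cdot,t)\|_{L^q}\lesssim((1+t)t)^{-1}\|f\|_{L^q}$, whose $q$-th power is not integrable at $t=0$, and no decay in $t$ can repair this. The point of the argument above is that the exponential factor $e^{-\delta|\xi^\prime|x_d}$ forces $S$ to behave like a Poisson extension, so the missing integrability at $t=0$ is supplied by the fractional smoothness encoded in the trace norm $\|f\|_{W^{1-1/q,q}}$ rather than by $\|f\|_{L^q}$; this is exactly why the right-hand side of the gradient estimate must be the trace norm. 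The only genuinely technical points are the uniform Fa\`a-di-Bruno bound for the derivatives of $e^{(\delta/2)|\xi^\prime|x_d}$ and of $|\xi^\prime|^{-1}$ away from the origin, which is where one verifies that the Mikhlin constant of $\rho(\cdot,t)$ stays $O((1+t)^{-1})$, and the standard Poisson characterization of the trace space, which I would cite rather than reprove.
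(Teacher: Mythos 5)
Your proof is correct and follows essentially the same route as the paper's: a slicewise application of the Mikhlin multiplier lemma with constant $M_0(1+x_d)^{-1}$ for the $L^q$ bound, and, for the gradient, a factorization of each first-derivative symbol into a uniformly Mikhlin part times the Poisson-type factor $|\xi^\prime|e^{-(\delta/2)|\xi^\prime|x_d}$, concluding with the classical trace estimate for the Poisson extension. The only (cosmetic) difference is that you compare all derivatives to the single quantity $|\nabla^\prime|e^{-t|\nabla^\prime|}f$ via the semigroup characterization of $W^{1-\frac1q,q}(\R^{d-1})$, whereas the paper compares $\partial_j Tf$ and $\partial_d Tf$ componentwise to the corresponding derivatives of the extension $v$ with $\widehat{v}=e^{-\delta_0 x_d|\xi^\prime|}\widehat f$ and then cites Stein's estimate $\|\nabla v\|_{L^q(\Rdp)}\le C\|f\|_{W^{1-\frac1q,q}(\R^{d-1})}$ --- an equivalent endgame.
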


\begin{proof}
Note that for each $x_d>0$, $m(\xi^\prime, x_d)$ satisfies \eqref{HM} with $M=M_0(1+x_d)^{-1}$.
It follows from Lemma \ref{H-Lemma} that
$$
\aligned
\int_{\Rdp} |Tf (x^\prime, x_d)|^q\, dx^\prime dx_d
&\le C M_0^q \int_0^\infty \int_{\R^{d-1}} \frac{|f(x^\prime)|^q }{(  1+x_d )^q} \, dx^\prime dx_d\\
& \le CM_0^q \int_{\R^{d-1}} |f|^q \, dx^\prime.
\endaligned
$$
To prove the second inequality in \eqref{HP-2}, we write 
$$
\widehat{\partial_j Tf } (\xi^\prime, x_d)
=e^{\delta_0 x_d |\xi^\prime |} m (\xi^\prime, x_d) \cdot i \xi_j e^{-\delta_0 x_d |\xi^\prime|} \widehat{f} (\xi^\prime)
$$
for $1\le j \le d-1$, and
$$
\widehat{\partial_d Tf} (\xi^\prime, x_d)
= |\xi^\prime|^{-1} e^{\delta_0 x_d |\xi^\prime |} \partial_d m(\xi^\prime, x_d) 
\cdot |\xi^\prime| e^{-\delta_0 x_d |\xi^\prime |} \widehat{f} (\xi^\prime),
$$
where $\delta_0=\delta/2$.
Using \eqref{HM-1}, it is not hard to show that  for each $x_d>0$,  both 
$$
e^{\delta_0 x_d |\xi^\prime |} m(\xi^\prime, x_d) \quad \text{ and } \quad
|\xi^\prime|^{-1} e^{\delta_0 x_d |\xi^\prime|}\partial_d m(\xi^\prime, x_d)
$$
satisfy the condition \eqref{HM} with $M$ independent of $x_d$.
This implies that
$$
\int_{\Rdp} | \nabla T f(x^\prime, x_d) |^q\, dx
\le C \int_{\Rdp} | \nabla v(x^\prime, x_d)|^q\, dx,
$$
where $v$ is defined by 
$$
\widehat{v}(\xi^\prime, x_d) = e^{-\delta_0 x_d |\xi^\prime|} \widehat{f}(\xi^\prime).
$$

Finally, we note that if $\delta_0=1$,  $v$ is a solution of the Dirichlet problem,
\begin{equation}\label{PP}
\left\{
\aligned
\left( \partial_1^2 +\cdots +\partial_{d-1}^2 +  \partial_d^2 \right) v  & =0 &\quad & \text{ in } \Rdp,\\
v & =f & \quad & \text{ on } \mathbb{R}^{d-1} \times \{ 0\},
\endaligned
\right.
\end{equation}
given by the Poisson integral of $f$.
It is well known that $v$  satisfies  the estimate,
$$
\| \nabla  v \|_{L^q (\Rdp)} \le C \| f \|_{W^{1-\frac{1}{q}, q} (\R^{d-1})},
$$
where $C$ depends on $d$ and $q$ \cite[Chapter V]{Stein}.
The general case follows from the case $\delta_0=1$ by a rescaling in $x_d$.
As a result, we obtain the second inequality in \eqref{HP-2}.
\end{proof}

\begin{proof}[\bf Proof of Theorem \ref{H-thm-1}]

By rescaling we may assume $|\lambda|=1$.

Step 1. We establish the existence and the estimate \eqref{H-est-1}.

Let $F\in L^q(\Rdp; \Cd)$, $f\in L^q(\Rdp;  \Cdd)$ and $g\in L^q(\Rdp; \C)\cap \hW^{-1, q}( \Rdp; \C)$.
We extend $F, f, g$ to $\Rd$ by either  the even or odd reflection  in such a way that 
the extensions  satisfy the condition \eqref{ref}. 
Let $\widetilde{F}$, $\widetilde{f}$, $\widetilde{g}$ denote the extensions of $F$, $f$, $g$, respectively.
Note that $\widetilde{g}\in \hW^{-1, q}(\Rd)$ and
$$
\| \widetilde{g} \|_{\hW^{-1, q}(\R^d)}
\le 2 \| g \|_{\hW^{-1, q} (\Rdp)}.
$$
Let $(\widetilde{u}, \widetilde{p})$ denote the solution of \eqref{R-eq} in $\Rd$, given by Theorem \ref{R-thm-1}, with
data $\widetilde{F}, \widetilde{f}, \widetilde{g}$.
By Remark \ref{remark-R}, we have $\widetilde{u}_d (x^\prime, 0)=0$ for any $x^\prime \in \mathbb{R}^{d-1}$.
By subtracting $(\widetilde{u}, \widetilde{p}) $ from solutions of \eqref{H-eq}, we reduce the problem to  the 
Dirichlet problem,
\begin{equation}\label{H-1}
\left\{
\aligned
-\Delta u+\nabla p +\lambda u & = 0 & \quad & \text{ in } \Rdp,\\
\text{\rm div} (u) & =0 & \quad & \text{ in } \Rdp,\\
u_j & =h_j  & \quad & \text{ on } \mathbb{R}^{d-1} \times \{ 0 \} \text{ for } 1\le j \le d-1,\\
u_d & =0 & \quad & \text{ on } \mathbb{R}^{d-1} \times \{ 0 \},
\endaligned
\right.
\end{equation}
where $h_j= - \widetilde{u}_j$ for $1\le j \le d-1$.
We will show that there exist $u\in W^{1, q}(\Rdp; \Cd)$ and $p\in L^q(\Rdp; \C)$ such that $(u, p)$ satisfies  \eqref{H-1}  and the estimate,
\begin{equation}\label{H-2}
\| \nabla u \|_{L^q(\Rdp)} + \| u \|_{L^q(\Rdp)}
+ \| p \|_{L^q(\Rdp)} 
\le C \| h\|_{W^{1-\frac{1}{q}, q } (\mathbb{R}^{d-1})}.
\end{equation}
Since $h_j=-\widetilde{u}_j$ on $\R^{d-1}\times \{ 0\}$ and 
$$
\aligned
\| \widetilde{u}\|_{W^{1-\frac{1}{q}, q} (\mathbb{R}^{d-1})}
 & \le C \left\{ \| \nabla \widetilde{u} \|_{L^q(\Rd)} + \| \widetilde{u }\|_{L^q(\Rd)} \right\}\\
 & \le C \left\{ \| \widetilde{F}\|_{L^q(\Rd)}
 + \|\widetilde{f} \|_{L^q(\Rd)}
 + \| \widetilde{g} \|_{L^q(\Rd)}
 + \| \widetilde{g}  \|_{\hW^{-1, q}(\Rd)} \right\} \\
 &  \le C \left\{ \| {F}\|_{L^q(\Rdp)}
 + \| f\|_{L^q(\Rdp)}
 + \| g \|_{L^q(\Rdp)}
 + \| g \|_{\hW^{-1, q}(\Rdp)} \right\}, \\
\endaligned
$$
the desired estimate \eqref{H-est-1}  follows from \eqref{H-2}.

To solve \eqref{H-1}, we use the partial Fourier transform in $x^\prime=(x_1, \dots, x_{d-1})$, defined by \eqref{FT}.
Let 
$$
m_0 (s, x_d) =\frac{e^{-\sqrt{\lambda + s^2} x_d} -e^{-s x_d}}{\sqrt{\lambda + s^2} -s },
$$
where $s=|\xi^\prime|$.
It follows from \cite{Sohr-1994} that 
a solution of  \eqref{H-1} in the partial Fourier transform  is given by
\begin{equation}\label{H-1-1}
\left\{
\aligned
\widehat{u}_j (\xi^\prime, x_d) & =
-\partial_d m_0(s, x_d) \frac{\xi_j \xi_k}{s^2} \widehat{h}_k (\xi^\prime)
+ \left( \delta_{jk} -\frac{\xi_j \xi_k}{s^2} \right)
e^{-\sqrt{\lambda +s^2} x_d} \widehat{h}_k (\xi^\prime)\\
\widehat{u}_d (\xi^\prime, x_d) & =i m_0 (s, x_d) \xi_k \widehat{h}_k (\xi^\prime)
\endaligned
\right.
\end{equation}
for $1\le j \le d-1$, and
\begin{equation}\label{H-1-1p}
\widehat{p} (\xi^\prime, x_d)
=-s^{-2} (\lambda +s^2 -\partial_d^2) \partial_d \widehat{u}_d,
\end{equation}
where the repeated index $k$ is summed from $1$ to $d-1$.
Write 
$$
\widehat{u}_j (\xi^\prime, x_d) =m_{jk} (\xi^\prime, x_d) \widehat{h}_k(\xi)
$$
 for $1\le j \le d$.
 Note that  $m_{jk}$ satisfies the condition \eqref{HM-1} (see \cite[Lemma 2.5]{Sohr-1994}).
 By Lemma \ref{Lemma-HP}, we obtain
 $$
\| u \|_{L^q(\Rdp) } + \|\nabla u \|_{L^q(\Rdp)} \le C \| h \|_{W^{1-\frac{1}{p}, p} (\mathbb{R}^{d-1})}
 $$
 for $1<q<\infty$. 
Using the fact that
$$
s^{-2} (\lambda +s^2-\partial_d^2) \partial_d m_0 (s, x_d)=
s^{-1} (\sqrt{\lambda+ s^2} + s) e^{-s x_d }, 
$$
and that $s^{-1} (\sqrt{\lambda +s^2} +s ) e^{- s x_d}$ satisfies the condition \eqref{HM-1}, it follows again by Lemma \ref{Lemma-HP} that 
$$
\int_{\Rdp} |p|^q\, dx
\le C   \| h \|^q_{W^{1-\frac{1}{q}, q} (\mathbb{R}^{d-1}) }.
$$
As a result, we have proved \eqref{H-2}.

\medskip

Step 2. With the existence established in Step 1 at our disposal,
 the uniqueness may be proved by using the same argument as in the proof of Theorem \ref{R-thm-1}.
 We omit the details.
\end{proof}

\begin{remark}\label{re-p-H}
Let $\lambda\in \Sigma_\theta$ and $|\lambda|=1$.
Let $(u, p)$ be the solution of \eqref{H-eq}, given by  Theorem \ref{H-thm-1}.
It follows from the proof of Theorem \ref{H-thm-1} that
$p=p_1 + p_2$, where $p_1\in L^q(\Rdp; \C)$, $p_2 \in \hW^{1, q} (\Rdp; \C)$, and
$$
\| p_1 \|_{L^q(\Rdp)}
+ \| \nabla p_2 \|_{L^q(\Rdp)}
\le C \left\{ \| F \|_{L^q(\Rdp)}
+ \| f \|_{L^q(\Rdp)}
+ \| g \|_{L^q(\Rdp)}
+ \| g \|_{\hW^{-1, q} (\Rdp)} \right\}, 
$$
where $C$ depends only on $d$, $q$ and $\theta $.
\end{remark}

\begin{remark}\label{re-H-reg}
Let $1<q_1<q_2<\infty$.
Suppose that  $F\in L^{q_j}(\Rdp; \Cd)$, $f\in L^{q_j} (\Rdp; \Cdd)$, and
$g \in L^{q_j} (\Rdp; \C)\cap \hW^{-1, q_j}(\Rdp; \C)$ for $j=1, 2$.
Let $u^j \in W^{1, q_j}_0(\Rdp; \Cd) $ be the solution of \eqref{H-eq}, given by Theorem \ref{H-thm-1},   with the same data $F, f, g$, for $j=1, 2$.
Since the solutions constructed in $W^{1, q}_0(\Rdp; \Cd)$ for the existence part of the proof  do not  depend on $q$,
it follows that    $u^1=u^2$  in $\Rdp$.
As a result, we obtain $u^1=u^2 \in W^{1, q_1}_0(\Rdp; \Cd)\cap W^{1, q_2}_0 (\Rdp; \Cd)$.
\end{remark}


\section{The region above a Lipschitz graph}\label{section-G}

Let 
$$
\H_\psi= \left\{ (x^\prime, x_d) \in \Rd: \  x^\prime\in \mathbb{R}^{d-1} \text{ and } x_d > \psi (x^\prime) \right\}, 
$$
where $\psi: \mathbb{R}^{d-1} \to \mathbb{R}$ is a Lipschitz function.
Note that if $\psi=0$, we have $\H_0 =\Rdp$.
In this section we study  the resolvent problem for the Stokes equations,
\begin{equation}\label{G-eq}
\left\{
\aligned
-\Delta u +\nabla p+\lambda u  & =F +\text{\rm div} (f) & \quad & \text{ in } \H_\psi,\\
\text{\rm div}(u) & = g & \quad & \text{ in } \H_\psi,\\
u & =0 & \quad  & \text{ on } \partial \H_\psi,
\endaligned
\right.
\end{equation}
 where $\lambda\in \Sigma_\theta$.
For $1< q< \infty$, define
\begin{equation}\label{AB}
\aligned
A_\psi^q =L^q(\H_\psi; \C)  + \hW^{1, q}(\H_\psi; \C) \quad
\text{ and } \quad
B_\psi^q = L^q(\H_\psi; \C) \cap \hW^{-1, q}(\H_\psi; \C),
\endaligned
\end{equation}
where, as in the case $\Rd$ and $\Rdp$, 
$$
\hW^{1, q}(\H_\psi; \C)=\left\{  u\in L^q_{\loc}({\H_\psi}; \C): \ \nabla u \in L^q(\H_\psi; \Cd ) \right\},
$$
with the norm $\|\nabla u \|_{L^q(\H_\psi)}$, 
and $\hW^{-1, q}(\H_\psi; \C)$ denotes the dual of $\hW^{1, q^\prime} (\H_\psi;  \C)$.
Note that $A_\psi^q$ and $B_\psi^q$ are Banach spaces with the usual norms,
$$
\| p \|_{A_\psi^q}
=\inf \left\{ \| p_1 \|_{L^q(\H_\psi)} + \| \nabla p_2 \|_{L^q(\H_\psi)}: \ \  p=p_1 + p_2  \text{ in } \H_\psi \right\}
$$
and
$$
\| g \|_{B_\psi^q}
= \| g \|_{L^q(\H_\psi)} + \| g \|_{\hW^{-1, q} (\H_\psi)}.
$$

The goal of this section is to prove the following.

\begin{thm}\label{thm-G}
Let $ \lambda\in \Sigma_\theta$ and $1< q<\infty$.
There exists $c_0\in (0, 1)$,  depending only on $d$, $q$ and $\theta$, such that 
if  $\|\nabla^\prime \psi \|_\infty\le  c_0$, then 
for any $F\in L^q(\H_\psi; \Cd)$, $f\in L^q(\H_\psi; \Cdd)$ and $g \in B_\psi^q$,
there exists  a unique  $(u, p)$ such that $u \in W_0^{1, q}(\H_\psi; \Cd)$, 
$p\in A_\psi^q $, and \eqref{G-eq} holds.
Moreover, the solution satisfies 
\begin{equation}\label{G-est}
\aligned
 & |\lambda|^{1/2} \| \nabla u \|_{L^q(\H_\psi)}
+|\lambda| \| u \|_{L^q(\H_\psi)}
\\
&\quad \le C \left\{
\| F \|_{L^q(\H_\psi)}
+ |\lambda|^{1/2}  \| f \|_{L^q(\H_\psi)}
+|\lambda|^{1/2}  \| g \|_{L^q(\H_\psi)}
+  |\lambda| \| g \|_{\hW^{-1, q}(\H_\psi)}
\right\},
\endaligned
\end{equation}
where $C$ depends only on $d$, $q$ and $\theta$.
\end{thm}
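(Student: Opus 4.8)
The plan is to reduce the curved problem \eqref{G-eq} to the flat half-space treated in Theorem \ref{H-thm-1} by a Lipschitz change of variables, and then to absorb the resulting error as a small perturbation. By the scaling invariance of $\|\nabla^\prime\psi\|_\infty$ noted in the introduction, I first normalize $|\lambda|=1$; the general estimate \eqref{G-est} follows at the end by undoing the rescaling. Define the flattening map $\Psi:\Rdp\to\H_\psi$ by $\Psi(y^\prime,y_d)=(y^\prime,\,y_d+\psi(y^\prime))$, which is bi-Lipschitz with unit Jacobian and inverse $\Phi(x^\prime,x_d)=(x^\prime,\,x_d-\psi(x^\prime))$. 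For a pair $(u,p)$ on $\H_\psi$ I set $\wu=u\circ\Psi$ and $\wp=p\circ\Psi$ on $\Rdp$. Since $\Psi$ preserves Lebesgue measure and distorts gradients only through the bounded factor $\nabla^\prime\psi$, this correspondence is a Banach-space isomorphism identifying $W_0^{1,q}(\H_\psi;\Cd)$ with $W_0^{1,q}(\Rdp;\Cd)$ and the spaces $A_\psi^q,B_\psi^q$ of \eqref{AB} with their flat analogues $A_0^q,B_0^q$, with norms comparable uniformly in $\psi$ as long as $\|\nabla^\prime\psi\|_\infty\le 1$. Thus it suffices to work entirely on $\Rdp$.

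Next I compute the operator $S_\psi^\lambda$ of \eqref{S00} in the $y$-coordinates and establish the decomposition \eqref{per-0}. Using $\partial_{x_j}=\partial_{y_j}-(\partial_j\psi)\partial_{y_d}$ for $1\le j\le d-1$ and $\partial_{x_d}=\partial_{y_d}$, one finds $\text{\rm div}_x u=\text{\rm div}_y\wu-\nabla^\prime\psi\cdot\partial_{y_d}\wu^\prime$, while $-\Delta_x u+\lambda u$ and $\nabla_x p$ produce $-\Delta_y\wu+\lambda\wu$ and $\nabla_y\wp$ together with correction terms each carrying a factor of $\nabla^\prime\psi$. The one dangerous term is the contribution $(\Delta^\prime\psi)\,\partial_{y_d}\wu$ coming from the second $x$-derivatives, which involves $\nabla^2\psi$ and is unavailable for a $C^1$ graph. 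Here is the crucial point that restricts us to $C^1$ rather than $C^{1,1}$ domains: since $\psi$ is independent of $y_d$, I rewrite
\begin{equation*}
(\Delta^\prime\psi)\,\partial_{y_d}\wu
=\text{\rm div}^\prime\!\big(\nabla^\prime\psi\,\partial_{y_d}\wu\big)
-\nabla^\prime\psi\cdot\nabla^\prime\partial_{y_d}\wu,
\end{equation*}
so that this term enters the momentum equation as the divergence of an $L^q$ field plus a product of $\nabla^\prime\psi$ with a second derivative of $\wu$ --- both legitimate elements of $W^{-1,q}(\Rdp;\Cd)$ with coefficient $\nabla^\prime\psi$. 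Likewise the extra divergence term is $\nabla^\prime\psi\cdot\partial_{y_d}\wu^\prime=\partial_{y_d}(\nabla^\prime\psi\cdot\wu^\prime)$, which lies in $B_0^q$: it is bounded in $L^q$ by $\|\nabla^\prime\psi\|_\infty\|\wu\|_{L^q}$ and, being a single $y_d$-derivative of an $L^q$ function, also in $\hW^{-1,q}(\Rdp)$ with the same bound. Collecting all such terms defines the remainder $R(\wu,\wp)$, and the above shows $\|R(\wu,\wp)\|_{Y_0^q}\le C\|\nabla^\prime\psi\|_\infty\,\|(\wu,\wp)\|_{X_0^q}$ whenever $\|\nabla^\prime\psi\|_\infty\le 1$.

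With the decomposition in hand, I invoke Theorem \ref{H-thm-1} and Remark \ref{re-p-H} to conclude that $S_0^\lambda:X_0^q\to Y_0^q$ is a bijection with $\|(S_0^\lambda)^{-1}\|_{Y_0^q\to X_0^q}\le C(d,q,\theta)$: existence and the gradient/function bounds come from Theorem \ref{H-thm-1}, the pressure bound in $A_0^q$ from Remark \ref{re-p-H}, and injectivity from the uniqueness in that theorem. Choosing $c_0=c_0(d,q,\theta)\in(0,1)$ so small that $C(d,q,\theta)\,c_0<\tfrac12$, the smallness of $R$ gives $\|R\,(S_0^\lambda)^{-1}\|_{Y_0^q\to Y_0^q}<\tfrac12$, whence $I+R(S_0^\lambda)^{-1}$ is invertible by a Neumann series and therefore so is $S_0^\lambda+R$, with inverse bounded by $2C(d,q,\theta)$. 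Transporting back through the isomorphism of the first paragraph yields that $S_\psi^\lambda:X_\psi^q\to Y_\psi^q$ is a bijection satisfying the uniform bound \eqref{S-0}, which is the content of the auxiliary Theorem \ref{G-thm-1}.

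Finally, \eqref{S-0} applied to the data $(F+\text{\rm div}(f),g)\in Y_\psi^q$, together with the definitions of the norms on $X_\psi^q$ and $Y_\psi^q$, gives for $|\lambda|=1$ the estimate $\|\nabla u\|_{L^q(\H_\psi)}+\|u\|_{L^q(\H_\psi)}\le C\{\|F\|_{L^q}+\|f\|_{L^q}+\|g\|_{L^q}+\|g\|_{\hW^{-1,q}}\}$, and the uniqueness of $(u,p)$ is exactly the injectivity of $S_\psi^\lambda$. Rescaling to a general $\lambda\in\Sigma_\theta$ restores the $\lambda$-weighted form \eqref{G-est}. The main obstacle is the bookkeeping of the second paragraph: one must verify that, after the change of variables, every error term --- most delicately the one arising from $\Delta^\prime\psi$ --- can be written so as to lie in the correct target space $Y_0^q$ (divergence form for the momentum equation, and simultaneously $L^q$ and $\hW^{-1,q}$ for the divergence constraint) with norm controlled by $\|\nabla^\prime\psi\|_\infty$ \emph{alone}. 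This is precisely what makes the argument work at the $C^1$ level, where $\nabla^2\psi$ is not available.
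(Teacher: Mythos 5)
Your proposal is correct and follows essentially the same route as the paper's proof: flatten $\H_\psi$ by the bi-Lipschitz graph map, decompose $S_\psi^\lambda$ as $S_0^\lambda$ plus a remainder $R$ with $\| R \|_{X_0^q \to Y_0^q} \le C \|\nabla^\prime \psi\|_\infty$, invert by a Neumann series using Theorem \ref{H-thm-1} together with the pressure bound of Remark \ref{re-p-H}, transport back, and rescale to general $\lambda \in \Sigma_\theta$. The only difference is organizational: the paper writes every correction term in divergence form from the outset (see \eqref{G-10}--\eqref{G-15}), so $\Delta^\prime \psi$ never appears, whereas you expand in non-divergence form and reassemble; be aware that placing your leftover term $\nabla^\prime\psi\cdot\nabla^\prime\partial_{y_d}\wu$ in $W^{-1,q}(\Rdp)$ with the factor $\|\nabla^\prime\psi\|_\infty$ is not a general multiplier fact (an $L^\infty$ coefficient does not act boundedly on $W^{-1,q}$) but requires the same commutation you already use for the divergence constraint, namely $\nabla^\prime\psi\cdot\nabla^\prime\partial_{y_d}\wu=\partial_{y_d}\big(\nabla^\prime\psi\cdot\nabla^\prime\wu\big)$, valid because $\psi$ is independent of $y_d$.
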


To prove Theorem \ref{thm-G}, we introduce two Banach spaces, 
\begin{equation}
X^q_\psi=W^{1, q}_0 (\H_\psi; \Cd) \times A_\psi^q
\quad \text{ and } \quad
Y_\psi^q =W^{-1, q}(\H_\psi; \Cd) \times  B_\psi^q,
\end{equation}
with the usual product norms.
For $\lambda\in \Sigma_\theta$ with $|\lambda|=1$, consider the operator 
\begin{equation}
S_\psi^\lambda (u, p)
= \big(-\Delta u  +\nabla p+\lambda u, \text{\rm div} (u) \big).
\end{equation}
It is not hard to see that $S_\psi ^\lambda$ is a bounded linear operator from $X_\psi^q$ to $Y_\psi^q$ for any $1< q< \infty$ and that 
\begin{equation}
\| S_\psi^\lambda (u, p)\|_{Y_\psi^q}
\le C \| (u, p) \|_{X_\psi^q},
\end{equation}
where $C$ depends only on $d$ and $q$.
Using Theorem \ref{H-thm-1} and a perturbation argument, 
we will show that $S_\psi^\lambda$ is invertible if $\|\nabla^\prime  \psi \|_\infty$ is sufficiently small.

\begin{lemma}\label{G-lemma}
Let $\lambda\in \Sigma_\theta$ with  $|\lambda|=1$ and $1< q< \infty$.
Assume that  $\psi=0$. Then $S_0^\lambda: X_0^q \to Y_0^q$ is a bijection and
\begin{equation}\label{G-5} 
\| (S_0^\lambda) ^{-1} \|_{Y_0^q \to X_0^q} \le C,
\end{equation}
where $C$ depends only on $d$, $q$ and $\theta$.
\end{lemma}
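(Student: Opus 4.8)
The plan is to deduce the lemma directly from Theorem~\ref{H-thm-1} and Remark~\ref{re-p-H}, since for $\psi=0$ we have $\H_0=\Rdp$ and all the analytic work has already been carried out there. Writing a general datum as $(\Phi,g)\in Y_0^q$ with $\Phi\in W^{-1,q}(\Rdp;\Cd)$ and $g\in B_0^q$, the bound \eqref{G-5} amounts to the claim that the unique solution $(u,p)$ of $S_0^\lambda(u,p)=(\Phi,g)$ satisfies $\|(u,p)\|_{X_0^q}\le C\|(\Phi,g)\|_{Y_0^q}$. I would therefore establish surjectivity (with this quantitative estimate) and injectivity separately. The only discrepancy between Theorem~\ref{H-thm-1} and the present statement is that the first slot of $Y_0^q$ carries a general negative-order functional rather than data of the special form $F+\mathrm{div}(f)$, so the first step is to record the standard representation of such functionals.

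For surjectivity, given $(\Phi,g)\in Y_0^q$ I would first split $\Phi=F+\mathrm{div}(f)$ with $F\in L^q(\Rdp;\Cd)$, $f\in L^q(\Rdp;\Cdd)$ and $\|F\|_{L^q(\Rdp)}+\|f\|_{L^q(\Rdp)}\le C\|\Phi\|_{W^{-1,q}(\Rdp)}$. This is routine: the map $v\mapsto(v,\nabla v)$ embeds $W_0^{1,q^\prime}(\Rdp;\Cd)$ isometrically into $L^{q^\prime}\times L^{q^\prime}$, so the functional $\Phi$ extends by Hahn--Banach to the larger space, and the Riesz representation of the dual $L^q\times L^q$ yields the pair $(F,f)$ with the stated control. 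Feeding $F,f,g$ into Theorem~\ref{H-thm-1} (with $|\lambda|=1$) produces $u\in W_0^{1,q}(\Rdp;\Cd)$ and $p\in L^q(\Rdp;\C)+\hW^{1,q}(\Rdp;\C)=A_0^q$ solving $S_0^\lambda(u,p)=(\Phi,g)$. Estimate \eqref{H-est-1} bounds $\|u\|_{L^q(\Rdp)}+\|\nabla u\|_{L^q(\Rdp)}$ by $C\big(\|\Phi\|_{W^{-1,q}(\Rdp)}+\|g\|_{B_0^q}\big)$, while Remark~\ref{re-p-H} supplies the matching bound $\|p\|_{A_0^q}\le C\big(\|F\|_{L^q(\Rdp)}+\|f\|_{L^q(\Rdp)}+\|g\|_{B_0^q}\big)$; together these give $\|(u,p)\|_{X_0^q}\le C\|(\Phi,g)\|_{Y_0^q}$.

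For injectivity, suppose $(u,p)\in X_0^q$ with $S_0^\lambda(u,p)=0$. Then $u\in W_0^{1,q}(\Rdp;\Cd)$ solves \eqref{H-eq} with $F=f=g=0$, so the uniqueness assertion of Theorem~\ref{H-thm-1} forces $u=0$. Consequently $\nabla p=0$ in $W^{-1,q}(\Rdp;\Cd)$, so $p$ is constant; but in $A_0^q$ a constant $c$ can be written as $0+c$ with $0\in L^q(\Rdp;\C)$ and $c\in\hW^{1,q}(\Rdp;\C)$, where constants are identified with $0$, so that $\|c\|_{A_0^q}=0$ and hence $p=0$ in $A_0^q$. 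Thus $S_0^\lambda$ is injective, and combined with the surjectivity estimate it is a bijection obeying \eqref{G-5}.

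Since the genuine analysis---the multiplier bounds of Lemmas~\ref{H-Lemma} and \ref{Lemma-HP} together with the explicit half-space solution---is entirely absorbed into Theorem~\ref{H-thm-1} and Remark~\ref{re-p-H}, I expect no serious obstacle; the lemma is essentially a repackaging into the functional-analytic language of $X_0^q$ and $Y_0^q$. The only point demanding care is the bookkeeping of function spaces: checking that an arbitrary $W^{-1,q}$ datum really does split as $F+\mathrm{div}(f)$ with $L^q$-control, and that the pressure component returned by Theorem~\ref{H-thm-1} lands in $A_0^q$ with constants quotiented out, so that injectivity holds in $X_0^q$ and not merely modulo constants.
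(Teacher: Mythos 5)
Your proof is correct and takes essentially the same route as the paper's: the paper likewise deduces Lemma \ref{G-lemma} directly from Theorem \ref{H-thm-1} and Remark \ref{re-p-H}, using exactly the representation of an arbitrary $\Lambda\in W^{-1,q}(\Rdp;\Cd)$ as $F+\text{\rm div}(f)$ with $\|F\|_{L^q(\Rdp)}+\|f\|_{L^q(\Rdp)}\approx\|\Lambda\|_{W^{-1,q}(\Rdp)}$. The only difference is that you spell out details the paper leaves implicit, namely the Hahn--Banach/duality argument behind that splitting and the injectivity bookkeeping (that $u=0$ forces $\nabla p=0$ and that constants have zero norm in $A_0^q$), both of which you handle correctly.
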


\begin{proof}
In the case $\psi=0$, we have $ \H_\psi =\Rdp$.
The lemma follows readily from Theorem \ref{H-thm-1} and the estimate for $p$ in Remark \ref{re-p-H}.
Indeed, note that for any $\Lambda \in W^{-1, q}(\Omega; \Cd)$, there exist 
$F\in L^q(\Omega; \Cd)$ and $ f\in L^q(\Omega; \Cdd)$ such that
$\Lambda= F +\text{\rm div}(f)$ and  $\|\Lambda\|_{W^{-1, q}(\Omega)}
\approx \| F \|_{L^q(\Omega)} + \| f \|_{L^q(\Omega)}$.
\end{proof}

\begin{thm}\label{G-thm-1}
Let $\lambda\in \Sigma_\theta$ with $|\lambda|=1$.
Let $1<q< \infty$.
There exists $c_0\in (0, 1)$, depending only on $d$, $q$ and $\theta$, such that
if $\|\nabla^\prime\psi  \|_\infty\le  c_0$,  then 
$S_\psi^\lambda: X_\psi ^q \to Y_\psi ^q$ is a bijection and
\begin{equation}\label{G-6}
\| ( S_\psi^\lambda) ^{-1} \|_{Y_\psi ^q \to X_\psi ^q} \le C,
\end{equation}
where $C$ depends only on $d$, $q$ and $\theta$.
\end{thm}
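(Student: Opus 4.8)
The plan is to reduce the problem to the flat half-space by means of the bi-Lipschitz change of variables $\Psi(x^\prime,x_d)=(x^\prime,x_d-\psi(x^\prime))$, which maps $\H_\psi$ onto $\Rdp$ and $\partial\H_\psi$ onto $\R^{d-1}\times\{0\}$. Writing $\Phi=\Psi^{-1}$ and $J=D\Psi$, one checks that $J=I+E$, where $E$ has its only nonzero entries in the last row, equal to $-\partial_i\psi$ for $1\le i\le d-1$, so that $\|E\|_\infty\le\|\nabla^\prime\psi\|_\infty$ and $\det J\equiv 1$. First I would verify that the pullback $P:(u,p)\mapsto(u\circ\Phi,\,p\circ\Phi)$ is a bounded isomorphism $X_\psi^q\to X_0^q$, and likewise induces a bounded isomorphism on the target spaces $Y_\psi^q\to Y_0^q$, with operator norms and inverse norms bounded by a constant depending only on $d$ whenever $\|\nabla^\prime\psi\|_\infty\le 1$. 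Since $\det J\equiv 1$ this is immediate for the $L^q$ factors, and for the first-derivative factors it follows from $\nabla(u\circ\Phi)=(J^{-T}\nabla u)\circ\Phi$ together with $\|J^{\pm1}\|\le 1+\|\nabla^\prime\psi\|_\infty$, passing to the dual spaces $W^{-1,q}$ and $\hW^{-1,q}$ by duality. Because $\Psi$ preserves the boundary and the zero trace, $P$ also respects the condition $u\in W_0^{1,q}$.

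The heart of the argument is the perturbation identity \eqref{per-0}. Rather than differentiating $u=\wu\circ\Psi$ twice, which would produce uncontrollable $\nabla^2\psi$ terms and force a $C^{1,1}$ hypothesis, I would compute $S_\psi^\lambda$ entirely in the weak, divergence form in which $Y_\psi^q$ is defined. Testing the momentum equation against $\varphi\in W_0^{1,q^\prime}(\H_\psi;\Cd)$ and changing variables (using $dx=dy$), the bilinear form $\int\nabla u:\nabla\varphi$ becomes $\int\nabla\wu:\nabla\widetilde{\varphi}$, with $\widetilde{\varphi}=\varphi\circ\Phi$, plus terms each carrying at least one factor of $E$; the pressure pairing $\int p\,\mathrm{div}\,\varphi$ and the constraint $\mathrm{div}(u)=g$ split analogously into a flat main term and an $E$-linear remainder. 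Collecting the main terms reproduces $S_0^\lambda(\wu,\wp)$, and the remainders define an operator $R$, so that $P\circ S_\psi^\lambda\circ P^{-1}=S_0^\lambda+R$ on $X_0^q$. By construction every term of $R$ is at least linear in $E$, hence in $\nabla^\prime\psi$.

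It then remains to establish $\|R\|_{X_0^q\to Y_0^q}\le C\|\nabla^\prime\psi\|_\infty$, which I expect to be the main obstacle, and finally to invert by a Neumann series. The genuinely delicate pieces are the $\hW^{-1,q}$ estimate of the divergence remainder $-\sum_{i<d}\partial_i\psi\,\partial_d\wu_i$ and the contribution of the $\hW^{1,q}$ part of the pressure, since in both the naive bound controls only an $L^q$ quantity. The key observation is that $\partial_i\psi$ is independent of $y_d$: integrating by parts in $y_d$ and using that the test functions have zero trace on $\{y_d=0\}$ moves the $y_d$-derivative off $\wu$ (respectively onto $\nabla\wp$), yielding the identity $\langle-\sum_i\partial_i\psi\,\partial_d\wu_i,\phi\rangle=\int\sum_i\partial_i\psi\,\wu_i\,\partial_d\phi$ and hence the clean bound $C\|\nabla^\prime\psi\|_\infty\,\|(\wu,\wp)\|_{X_0^q}$. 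The remaining error terms are manifestly of divergence form times an $L^q$ function and are estimated directly, the quadratic $E^2$ contributions being absorbed via $\|\nabla^\prime\psi\|_\infty\le 1$. Since $S_0^\lambda$ is invertible with $\|(S_0^\lambda)^{-1}\|_{Y_0^q\to X_0^q}\le C_0$ by Lemma~\ref{G-lemma}, I would then choose $c_0=c_0(d,q,\theta)\in(0,1)$ so small that $C\,c_0\,C_0\le\tfrac12$; then $S_0^\lambda+R$ is invertible with norm at most $2C_0$, and conjugating back by the uniformly bounded isomorphism $P$ yields the invertibility of $S_\psi^\lambda$ and the estimate \eqref{G-6}.
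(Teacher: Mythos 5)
Your proposal is correct and is essentially the paper's own proof: the same change of variables $\Psi(x^\prime,x_d)=(x^\prime,x_d-\psi(x^\prime))$, the same perturbation identity $P\circ S_\psi^\lambda\circ P^{-1}=S_0^\lambda+R$, the same divergence-form arrangement of the error terms exploiting that $\psi$ is independent of $x_d$ (so no $\nabla^2\psi$ ever needs to be estimated -- the paper computes $\Delta u$ classically but keeps the remainders as $\partial_d(\partial_k\wu\,\partial_k\psi)$, $\partial_k(\partial_d\wu\,\partial_k\psi)$, etc., which is the distributional form of your weak computation), the same linear-in-$\|\nabla^\prime\psi\|_\infty$ bound on $R$ including the split $\wp=\wp_1+\wp_2\in L^q(\Rdp;\C)+\hW^{1,q}(\Rdp;\C)$ for the pressure remainder and the duality estimate for the divergence remainder in $L^q\cap\hW^{-1,q}$, and the same Neumann-series inversion with $c_0\sim (2C_0)^{-1}$ followed by conjugation back through the uniformly bounded pullback. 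One harmless slip worth noting: in your $\hW^{-1,q}$ estimate of the divergence remainder, the boundary term at $\{y_d=0\}$ vanishes because $\wu\in W_0^{1,q}(\Rdp;\Cd)$ has zero trace, not because the test function $\phi\in\hW^{1,q^\prime}(\Rdp;\C)$ does (elements of $\hW^{1,q^\prime}$ need not vanish on the boundary); the identity you write is nonetheless correct.
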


\begin{proof}

Suppose $\|\nabla^\prime \psi\|_\infty\le  1$.
Define a bi-Lipschitz map $\Psi: \H_\psi \to \Rdp$ by
$$
\Psi (x^\prime, x_d) = (x^\prime, x_d -\psi (x^\prime)).
$$
Note that $\Psi^{-1} (x^\prime, x_d) = (x^\prime, x_d +\psi (x^\prime))$.
For a function $u$  in $\H_\psi $, let $\wu =u \circ \Psi^{-1} $, defined in $\Rdp$.
Thus, $u=\wu\circ \Psi$ and 
$$
\left\{
\aligned
\partial_j u  & =\partial_j \wu \circ \Psi- \partial_d (\wu \partial_j \psi) \circ \Psi \quad \text{ for } 1\le j \le d-1,\\
\partial_d u & =\partial_d \wu \circ \Psi.
\endaligned
\right.
$$
A computation shows that 
$$
\Delta u = \Delta \wu\circ \Psi
-\partial_d ( \partial_k\wu \partial_k \psi ) \circ \Psi
-\partial_k( \partial_d \wu \partial_k \psi) \circ \Psi
+ \partial_d ( \partial_d\wu  |\nabla^\prime \psi|^2 ) \circ \Psi,
$$
where $|\nabla^\prime \psi|^2 = |\partial_1 \psi|^2 +\cdots + |\partial_{d-1} \psi|^2$ and the repeated index $k$ is summed from $1$ to $d-1$.
For $(u, p) \in X_\psi^q$, let  $\wu=u\circ \Psi^{-1} $ and $\wp=p\circ\Psi^{-1} $. Then 
\begin{equation}\label{G-10}
\aligned
-\Delta u_j +\partial_j p +\lambda u_j 
  =- & \Delta \wu_j \circ \Psi +\partial_j \wp \circ \Psi+ \lambda \wu_j\circ \Psi\\
&+\partial_d (\partial_k \wu_j \partial_k \psi) \circ \Psi
+ \partial_k (\partial_d \wu_j \partial_k \psi) \circ \Psi\\
& -\partial_d ( \partial_d \wu_j |\nabla^\prime \psi|^2 ) \circ \Psi
-\partial_d ( \wp \partial_j \psi) \circ \Psi
\endaligned
\end{equation}
for $1\le j \le d-1$, and
\begin{equation}\label{G-11}
\aligned
-\Delta u_d +\partial_dp +\lambda u_d 
  = & -\Delta \wu_d  \circ \Psi +\partial_d \wp \circ \Psi+ \lambda \wu_d \circ \Psi\\
&+\partial_d (\partial_k \wu_d \partial_k \psi) \circ \Psi
+ \partial_k (\partial_d \wu_d \partial_k \psi) \circ \Psi\\
& -\partial_d ( \partial_d \wu_d  |\nabla^\prime \psi|^2 ) \circ \Psi, 
\endaligned
\end{equation}
where the repeated index $k$ is summed from $1$ to $d-1$.
Also, note that
\begin{equation}\label{G-12}
\text{\rm div} (u) =\text{\rm div}(\wu) \circ \Psi -\partial_d ( \wu_k \partial_k \psi ) \circ \Psi.
\end{equation}
In view of \eqref{G-10}, \eqref{G-11} and \eqref{G-12}, we obtain 
\begin{equation}\label{G-13}
S^\lambda_\psi (u, p)
=S^\lambda_0 (\wu, \wp) \circ \Psi + R(\wu, \wp)\circ \Psi,
\end{equation}
where $R(\wu, \wp)= (R_1 (\wu, \wp), \dots, R_d(\wu, \wp), R_{d+1} (\wu, \wp))$ with 
\begin{equation}\label{G-14}
R_j (\wu, \wp)
=\partial_d (\partial_k \wu_j \partial_k \psi) 
+ \partial_k (\partial_d \wu_j \partial_k \psi) 
 -\partial_d ( \partial_d \wu_j |\nabla^\prime \psi|^2 ) 
-\partial_d ( \wp \partial_j \psi) \\
\end{equation}
for $1\le j \le d-1$,  and 
\begin{equation}\label{G-15}
\left\{
\aligned
R_d (\wu, \wp)
&=
\partial_d (\partial_k \wu_d \partial_k \psi) 
+ \partial_k (\partial_d \wu_d \partial_k \psi) 
-\partial_d ( \partial_d \wu_d  |\nabla^\prime \psi|^2 ), \\
R_{d+1}(\wu, \wp)
&=-\partial_d ( \wu_k \partial_k \psi ).
\endaligned
\right.
\end{equation}

We claim that  for any $(\wu, \wp)\in X_0^q$, 
\begin{equation}\label{G-16}
\| R (\wu, \wp) \|_{Y_0^q}
\le C \|\nabla^\prime \psi \|_\infty \| (\wu, \wp) \|_{X_0^q},
\end{equation}
where $C$ depends only on $d$ and $q$.
To show \eqref{G-16}, we note that 
\begin{equation}\label{G-17}
\| R_j (\wu, \wp)\|_{W^{-1, q}(\Rdp)}
\le C \| \nabla^\prime \psi \|_\infty \| \nabla \wu \|_{L^q(\Rdp)}
+ \| \partial_d (\wp \partial_j \psi) \|_{W^{-1, q} (\Rdp)}
\end{equation}
for $1\le j \le d-1$, and
\begin{equation}\label{G-18}
\| R_d (\wu, \wp)\|_{W^{-1, q}(\Rdp)}
\le C \| \nabla^\prime \psi \|_\infty \| \nabla \wu \|_{L^q(\Rdp)},
\end{equation}
where we have used the assumption $\|\nabla^\prime \psi \|_\infty \le 1$.
To bound the second term in the right-hand side of \eqref{G-17}, we let
$$
\wp=\wp_1  + \wp_2 \in  L^q(\Rdp; \C) + \hW^{1, q} (\Rdp; \C) = A_0^q.
$$
Then
$$
\aligned
 \| \partial_d (\wp \partial_j \psi) \|_{W^{-1, q} (\Rdp)}
 & \le  \| \partial_d (\wp_1  \partial_j \psi) \|_{W^{-1, q} (\Rdp)} +  \| \partial_d (\wp_2 \partial_j \psi) \|_{W^{-1, q} (\Rdp)}\\
 & \le C \| \nabla^\prime \psi\|_\infty  \| \wp_1 \|_{L^q (\Rdp)} 
 + C \|\nabla^\prime \psi \|_\infty \|\partial_d \wp_2 \|_{L^q(\Rdp)}.
 \endaligned
 $$
This shows that
$$
 \| \partial_d (\wp \partial_j \psi) \|_{W^{-1, q} (\Rdp)}
\le C \| \nabla^\prime \psi \|_\infty  \| \wp \|_{A_0^q}.
$$
As a result, we have proved that
\begin{equation}
\| R_j (\wu, \wp) \|_{W^{-1, q}(\Rdp)}
\le C \| \nabla^\prime \psi \|_\infty \| (\wu, \wp) \|_{X_0^q}
\end{equation}
for $1\le j \le d$. This, together with the estimates,
$$
\| R_{d+1} (\wu, \wp)\|_{L^q(\Rdp)}
+ \| R_{d+1} (\wu, \wp) \|_{\hW^{-1, q}(\Rdp)}
\le C \| \nabla^\prime \psi \|_\infty
\left( \|\nabla \wu \|_{L^q(\Rdp)} + \| \wu \|_{L^q(\Rdp)} \right),
$$
gives \eqref{G-16}.

By Lemma \ref{G-lemma}, $S_0^\lambda: X_0^q \to Y_0^q$ is bounded and invertible
for $1< q< \infty$.
It follows by a standard perturbation argument  that $S_0^\lambda +R: X_0^q \to Y_0^q$ is bounded and invertible if
$$
\| R (S_0^\lambda)^{-1} \|_{Y_0^q \to Y_0^q} < 1.
$$
Moreover,  we have 
$$
\| (S_0^\lambda +R)^{-1} \|_{Y_0^q \to X_0^q}
\le \frac{ \| (S_0^\lambda)^{-1} \|_{Y_0^q \to X_0^q} }{ 1- \| R (S_0^\lambda)^{-1} \|_{Y_0^q \to Y_0^q}}.
$$
By \eqref{G-5} and  \eqref{G-16}, 
$$
\aligned
\| R (S_0^\lambda)^{-1} \|_{Y_0^q \to Y_0^q}
 & \le \| R \|_{X_0^q \to Y_0^q}   \| (S_0^\lambda)^{-1} \|_{Y_0^q \to X_0^q}\\
 &  \le C_0 \|\nabla^\prime \psi \|_\infty,
\endaligned
$$
where $C_0$ depends only on $d$, $q$ and $\theta$.
As a result, we have proved that if $\| \nabla^\prime \psi \|_\infty \le  (2C_0)^{-1}$, then 
$S_0^\lambda +R: X_0^q \to Y_0^q$ is invertible and
$$
\| (S_0^\lambda + R)^{-1} \|_{Y_0^q \to X_0^q} 
\le C
$$
for some $C$ depending on $d$, $q$ and $\theta$.
Finally, we note that 
$$
\| (u\circ \Psi^{-1} , p\circ \Psi^{-1} ) \|_{X_0^q} \approx \| (u, p) \|_{X_\psi ^q}
$$
for any $(u, p) \in X_\psi^q$, and
$$
\| (\Lambda \circ \Psi^{-1} , g \circ \Psi^{-1} )\|_{Y_0^q} \approx \| (\Lambda, g ) \|_{Y_\psi ^q}
$$
for any $(\Lambda, g ) \in Y_\psi^q$.
By \eqref{G-13}, we deduce that  if $\|\nabla^\prime\psi  \|_\infty
\le c_0(d, q, \theta)$,  then  $S_\psi^ \lambda: X_\psi^q \to Y_\psi^q$ is invertible and \eqref{G-6} holds.
This completes the proof.
\end{proof}

\begin{proof}[\bf Proof of Theorem \ref{thm-G}]
The case $|\lambda|=1$ follows readily from Theorem \ref{G-thm-1}.
The general case can be reduced to the case $|\lambda|=1$ by rescaling.
Indeed, let $(u, p)$ be a solution of \eqref{G-eq} in $\H_\psi$.
Let  $v(x)= u(|\lambda|^{-1/2} x)$ and $\phi (x) =|\lambda|^{-1/2} p ( |\lambda|^{-1/2}  x)$.
Then $(v, \phi)$ is a solution of the resolvent problem for the Stokes equations
in the graph domain  $\H_{\psi_\lambda} $ with the parameter $ \lambda |\lambda|^{-1}\in \Sigma_\theta$, where
$\psi_\lambda (x^\prime)= |\lambda|^{1/2} \psi ( |\lambda|^{-1/2}  x^\prime)$.
Moreover,  we have $\|\nabla^\prime \psi_\lambda  \|_\infty = \| \nabla^ \prime \psi \|_\infty$.
As a result, the general case follows from the case $|\lambda|=1$.
\end{proof}

\begin{remark}\label{re-G-0}
Let $1<q_1< q_2< \infty$. 
Let $\lambda\in \Sigma_\theta$ and $|\lambda|=1$.
It follows from Lemma \ref{G-lemma} and Remark \ref{re-H-reg} that 
$S_0^\lambda: X_0^{q_1} \cap X_0^{q_2}
\to Y_0^{q_1} \cap Y_0^{q_2}$ is a bijection and
$$
\| (S_0^\lambda)^{-1} \|_{Y_0^{q_1} \cap Y_0^{q_2} \to X_0^{q_1}\cap X_0^{q_2}} \le C,
$$
where $C$ depends only on $d$, $q_1$, $q_2$ and $\theta$.
By the same perturbation argument as in the proof of Theorem \ref{G-thm-1}, we deduce that 
$S_\psi ^\lambda: X_\psi ^{q_1} \cap X_\psi ^{q_2}
\to Y_\psi ^{q_1} \cap Y_\psi ^{q_2}$ is a bijection and
$$
\| (S_\psi ^\lambda)^{-1} \|_{Y_\psi ^{q_1} \cap Y_\psi ^{q_2} \to X_\psi ^{q_1}\cap X_\psi ^{q_2}} \le C,
$$
if  $\|\nabla^\prime \psi  \|_\infty\le  c_0(d,  q_1, q_2, \theta )$, where  $C$ depends only on $d$, $q_1$, $q_2$ and $\theta$.
Consequently, if $F\in L^{q_1} (\H_\psi;  \Cd)\cap L^{q_2} (\H_\psi; \Cd)$,
$f\in L^{q_1} (\H_\psi; \Cdd) \cap L^{q_2} (\H_\psi;  \Cdd)$ and $g \in B^{q_1}_\psi \cap B^{q_2}_\psi$, 
then the  solution $u$ of \eqref{G-eq},  given by Theorem \ref{thm-G}, belongs to $W^{1, q_1}_0 (\H_\psi; \Cd)\cap W^{1, q_2}_0(H_\psi; \Cd)$, 
provided that $\|\nabla^\prime \psi \|_\infty$ is sufficiently small.
\end{remark}

\begin{remark}\label{re-G}
Let $(u, p)$ be a solution of the resolvent problem for the Stokes equations in $\H_\psi$.
Let $v(x)  =O^T u (Ox)$ and $\phi (x) = p(Ox)$, where $O$ is a  $d\times d$ orthogonal matrix.
Then
$$
\left\{
\aligned
(-\Delta v +\nabla \phi +\lambda  v )(x)  & = O^T (-\Delta  u +\nabla p + \lambda  u )(Ox),\\
\text{div}(v)  (x) & =\text{\rm div} (u) (Ox).
\endaligned
\right.
$$
Consequently, Theorem \ref{thm-G} continues to hold if the domain $\H_\psi$ is replaced by 
$$
O \H_\psi=
\{ y\in \Rd: y= Ox \text{ for some } x \in \H_\psi \}
$$
for any   $d\times d$ orthogonal matrix.
\end{remark}


\section{A bounded $C^1$ domain and the proof of Theorem \ref{main-1}}\label{section-B}

Throughout this section we assume that $\Omega$ is a bounded $C^1$ domain in $\Rd$.
This implies  that for any $c_0>0$, there exists some $r_0>0$ such that for each $z=(z^\prime, z_d) \in \partial \Omega$, 
\begin{equation}\label{B-1}
\Omega \cap B(z, 2 r_0)= D \cap B(z, 2r_0)
\quad \text{ and } \quad
\partial \Omega \cap B(z, 2r_0) =\partial D \cap B(z, 2r_0),
\end{equation}
where  $D$ is given by
\begin{equation}\label{B-2}
D=  O \H_\psi  \quad \text{ for some orthogonal matrix $O$ and some $C^1$ function $\psi$ in $\mathbb{R}^{d-1}$} 
\end{equation}
 with $\nabla^\prime \psi(z^\prime)=0$ and $\|\nabla^\prime \psi \|_\infty\le  c_0$.
 Recall that $\nabla^\prime$ denotes the gradient with respect to $x^\prime =(x_1, \dots, x_{d-1})$.
We will use $L_0^q(\Omega; \C)$ to denote the subspace  of $L^q(\Omega; \C)$ of functions $p$ with $\int_\Omega p=0$.

The goal of this section is to prove the following theorem, which contains Theorem \ref{main-1} as 
a special case with  $f=0$ and $g=0$.

\begin{thm}\label{thm-B-1}
Let $\Omega$ be a bounded $C^1$ domain in $\Rd$, $d\ge 2$.
Let $1< q< \infty$ and $\lambda\in \Sigma_\theta$.
For any $F\in L^q(\Omega; \Cd)$,  $f\in L^q(\Omega; \Cdd)$ and $g \in L_0^q(\Omega; \C)$, 
there exists a unique $u \in W^{1, q}_0(\Omega; \Cd)$ such that
\begin{equation}\label{B-eq-0}
\left\{
\aligned
-\Delta u +\nabla p +\lambda u & = F +\text{\rm div}(f),\\
\text{\rm div}(u) & =g
\endaligned
\right.
\end{equation}
hold in $\Omega$ for some $p\in L^1_{\loc}(\Omega; \C)$ in the sense of distributions.
Moreover, the solution $u$ satisfies the estimate, 
\begin{equation}\label{B-est-0}
\aligned
 & (|\lambda| +1)^{1/2} \| \nabla u \|_{L^q(\Omega)}
+ (|\lambda| +1) \| u \|_{L^q(\Omega)}\\
&\qquad\qquad\qquad
\le C \left\{ \| F \|_{L^q(\Omega)}
+ (|\lambda| +1)^{1/2} \| f \|_{L^q(\Omega)} 
+ (|\lambda| +1) \| g \|_{L^q(\Omega)} \right\},
\endaligned
\end{equation}
and $p\in L^q(\Omega; \C)$,
where $C$ depends only on $d$, $q$, $\theta$ and $\Omega$.
\end{thm}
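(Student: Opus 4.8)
The plan is to reduce the bounded-domain problem to the already-established half-space/graph result (Theorem~\ref{thm-G} together with Remark~\ref{re-G}) via a localization argument, and to handle the pressure normalization and the compatibility condition $\int_\Omega g=0$ separately. First I would fix $c_0=c_0(d,q,\theta)$ to be the small constant from Theorem~\ref{thm-G}, and then use \eqref{B-1}--\eqref{B-2} to obtain a corresponding $r_0>0$ so that each boundary point has a neighborhood in which $\Omega$ agrees with a rotated graph domain $O\H_\psi$ satisfying $\|\nabla'\psi\|_\infty\le c_0$. By compactness of $\partial\Omega$ I would extract a finite cover of $\overline\Omega$ by balls $\{B_k\}$: boundary balls centered at points of $\partial\Omega$, together with interior balls whose doubles stay inside $\Omega$. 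I would take a subordinate partition of unity $\{\varphi_k\}$ with $\sum\varphi_k=1$ on $\overline\Omega$.

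The heart of the argument is then a perturbation/Neumann-series construction. For each $k$, I would localize the data by multiplying by $\varphi_k$; on a boundary patch the localized problem lives on a graph domain $O\H_\psi$ and is solved using Theorem~\ref{thm-G} (via Remark~\ref{re-G}), while on interior patches one uses the whole-space result of Theorem~\ref{R-thm-1}. The key point is that commuting $\varphi_k$ through the operator $-\Delta+\nabla p+\lambda$ and $\operatorname{div}$ produces lower-order error terms involving $\nabla\varphi_k$ and $\varphi_k$ applied to $u,\nabla u,p$; these are \emph{compact} relative to the leading part because they gain a derivative of decay (they involve $u$ and $\nabla u$ but not $\nabla^2 u$, and $\nabla\varphi_k$ is bounded). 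Summing the local solutions $\sum_k \varphi_k(u_k,p_k)$ gives an approximate solution operator $T$ with $S^\lambda_\Omega T = I + K$ where $K$ is compact on the relevant product space. Because the error terms carry factors of $|\lambda|^{-1/2}$ (from the scaling in \eqref{G-est}) when $|\lambda|$ is large, for $|\lambda|\ge \lambda_0$ sufficiently large one has $\|K\|<1$ and a Neumann series inverts $I+K$ directly, yielding \eqref{B-est-0} with the sharp $\lambda$-dependence.

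The remaining regime $\lambda\in\Sigma_\theta$ with $|\lambda|\le\lambda_0$ (including $\lambda$ near $0$) is where the factor $(|\lambda|+1)$ rather than $|\lambda|$ in \eqref{B-est-0} is essential and where a genuinely compact-operator/Fredholm argument is needed. Here I would combine the parametrix $T$ with the Fredholm alternative: $S^\lambda_\Omega$ is a Fredholm operator of index zero between $W^{1,q}_0\times L^q_0$ and the data space, so surjectivity follows from injectivity. Injectivity is the uniqueness statement, which I would prove by a standard duality/energy argument (as in Step~2 of the proofs of Theorems~\ref{R-thm-1} and~\ref{H-thm-1}): testing the homogeneous equation against the solution of a dual Stokes problem with right-hand side $|u|^{q-2}\overline u$ forces $\int_\Omega|u|^q=0$. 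The main obstacle, and the step demanding the most care, is verifying the uniform-in-$\lambda$ bound on the error operator $K$: one must check that every commutator term is controlled by $\|F\|_{L^q}+(|\lambda|+1)^{1/2}\|f\|_{L^q}+(|\lambda|+1)\|g\|_{L^q}$ with a constant independent of $\lambda$, which requires tracking the $|\lambda|$-weights through the rescaling and confirming that the lower-order terms are subordinate. The pressure normalization $\int_\Omega p=0$ and the passage from the $A^q_\psi$-type pressure spaces on the patches to the single space $L^q(\Omega;\C)$ on the bounded domain (using that $\Omega$ is bounded, so $\hW^{1,q}\hookrightarrow L^q$ locally) also require attention but are routine once the Fredholm framework is in place.
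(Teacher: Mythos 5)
Your overall architecture (localize to the graph domains of Theorem~\ref{thm-G} and to $\Rd$, treat the commutators as lower-order errors) matches the paper's, but your central quantitative claim --- that every commutator error carries a factor $|\lambda|^{-1/2}$, so that the error operator $K$ satisfies $\|K\|<1$ for $|\lambda|\ge \lambda_0$ and a Neumann series closes --- is false, and this is precisely the point where the proof is delicate. Two of the commutator terms are $O(1)$ relative to the data, uniformly in $\lambda$. First, the pressure term $p_k\nabla\eta_k$: the estimate \eqref{G-est} controls only $u$ and $\nabla u$; the pressure is controlled through the $X^q_\psi$-norm of Theorem~\ref{G-thm-1} at $|\lambda|=1$, and rescaling to general $\lambda$ gives $\|p_1\|_{L^q}\le C\bigl(|\lambda|^{-1/2}\|F\|_{L^q}+\|f\|_{L^q}\bigr)$ for the $L^q$-component but only $\|\nabla p_2\|_{L^q}\le C\bigl(\|F\|_{L^q}+|\lambda|^{1/2}\|f\|_{L^q}\bigr)$ for the $\hW^{1,q}$-component --- no gain in $\lambda$ at all relative to the data norm, so on a patch of fixed size $r_0$ the term $p_2\nabla\eta_k$, viewed as an $F$-type datum, is $O(1)$ times the data. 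Second, the divergence commutator $u_k\cdot\nabla\eta_k$ must be measured in the weighted norm $|\lambda|\,\|\cdot\|_{\hW^{-1,q}}$ as part of the $Y$-norm; the resolvent bound gives only $\|u_k\|_{L^q}\le C|\lambda|^{-1}(\text{data})$, and $|\lambda|\,\|u_k\|_{W^{-1,q}}$ is genuinely $O(1)$ (already in $\Rd$: $\lambda(\lambda+A_q)^{-1}\mathbb{P}F\to \mathbb{P}F$ as $|\lambda|\to\infty$). So $K$ is compact but not small, and no choice of $\lambda_0$ makes the Neumann series converge. This is exactly why the paper's a priori estimate, Lemma~\ref{Lemma-B-3}, cannot absorb everything for large $|\lambda|$: the pressure error is converted via Lemma~\ref{Lemma-B-2} and the equation $\nabla p=\Delta u-\lambda u+F+\text{div}(f)$ into $\|\nabla u\|_{L^q}$ (absorbable) \emph{plus} the non-absorbable lower-order term $|\lambda|\,\|u\|_{W_0^{-1,q}(\Omega)}$ in \eqref{B-3-2}.

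The paper then removes this surviving term not by Fredholm theory but quantitatively: in Lemma~\ref{Lemma-B-4} one bootstraps from the $q=2$ energy estimate using the embedding $L^2(\Omega;\Cd)\subset W^{-1,s}_0(\Omega;\Cd)$ for $2<s\le \frac{2d}{d-2}$, so that $|\lambda|\,\|u\|_{W_0^{-1,s}}\le C|\lambda|\,\|u\|_{L^2}$ is bounded by the data, iterating finitely many times to reach all $q>2$; small $|\lambda|$ is handled by the shift $\lambda\mapsto \lambda+2\lambda_0$, existence follows by approximating $\Omega$ by smooth domains (the constants depend only on the $C^1$ character, which is what makes this legitimate), and $1<q<2$ by duality. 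Your Fredholm-alternative fallback has two further problems even if executed carefully: invertibility of $S^\lambda_\Omega$ at each fixed $\lambda$ does not yield the uniform constant in \eqref{B-est-0} over $\lambda\in\Sigma_\theta$ --- one would still need an a priori bound uniform in $\lambda$, which reintroduces the problematic term, or a compactness-contradiction argument along sequences $\lambda^\ell$ as the paper uses only in the exterior case (Lemma~\ref{Lemma-E-3}); and your duality proof of injectivity for $1<q<2$ presupposes solvability of the dual problem in $L^{q'}$, so the cases must be ordered (uniqueness for $q\ge 2$ directly from the energy class, since $\Omega$ is bounded; then existence for $q\ge 2$; only then uniqueness and existence for $q<2$), which your write-up leaves circular.
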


Theorem \ref{thm-B-1} follows from  Theorems \ref{R-thm-1}  and \ref{thm-G} by  a localization argument.

\begin{lemma}\label {Lemma-B-1}
Let $u\in W^{1, q}_0(\Omega; \Cd)$ for some $1< q< \infty$. Suppose  $\text{\rm div} (u)=0$ in $\Omega$. Then
\begin{equation}\label{B-1-0}
\| \text{\rm div}(u \varphi)\|_{\hW^{-1, q}(\Rd)}
\le C  (\|\nabla \varphi\|_\infty + \| \nabla^2 \varphi \|_\infty) \| u \|_{W^{-1, q}(\Omega)},
\end{equation}
where  $\varphi \in C_0^\infty (\Omega; \R)$
and $C$ depends on $d$, $q$,  diam$(\Omega)$ and the Lipschitz character of $\Omega$.
\end{lemma}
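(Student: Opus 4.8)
The plan is to use the divergence-free condition to collapse $\text{div}(u\varphi)$ into an expression of order zero in $u$, and then to bound the resulting functional by duality against $\hW^{1, q^\prime}(\Rd)$. The whole point is that the dangerous term involving $\nabla u$ disappears, so that the estimate can be closed with the negative norm $\|u\|_{W^{-1,q}(\Omega)}$ rather than with $\|u\|_{L^q(\Omega)}$.

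First I would observe that, since $\varphi\in C_0^\infty(\Omega)$, the field $u\varphi$ extends by zero to an element of $W^{1,q}_0(\Rd;\Cd)$, and that the Leibniz rule combined with $\text{div}(u)=0$ yields the pointwise identity $\text{div}(u\varphi)=\varphi\,\text{div}(u)+u\cdot\nabla\varphi=u\cdot\nabla\varphi$. Thus $\text{div}(u\varphi)$ is an $L^q$ function supported in the fixed compact set $K=\text{supp}\,\varphi\Subset\Omega$, and since $\int_{\Rd}\text{div}(u\varphi)=0$ it defines a bounded functional on $\hW^{1,q^\prime}(\Rd)$ that annihilates constants. By definition of the dual norm it then suffices to estimate $|\langle\text{div}(u\varphi),w\rangle|$ for $w\in\hW^{1,q^\prime}(\Rd)$ with $\|\nabla w\|_{L^{q^\prime}(\Rd)}\le 1$.

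Next I would write $\langle\text{div}(u\varphi),w\rangle=\int_\Omega(u\cdot\nabla\varphi)\,w=\int_\Omega u\cdot V$, where $V:=w\,\nabla\varphi$. Because $\nabla\varphi$ is supported in $K\Subset\Omega$, the field $V$ has compact support in $\Omega$ and lies in $W^{1,q^\prime}_0(\Omega;\Cd)$, so the duality between $W^{-1,q}(\Omega)$ and $W^{1,q^\prime}_0(\Omega)$ gives $|\langle u,V\rangle|\le \|u\|_{W^{-1,q}(\Omega)}\,\|V\|_{W^{1,q^\prime}_0(\Omega)}$. It then remains to show that $\|V\|_{W^{1,q^\prime}_0(\Omega)}\le C(\|\nabla\varphi\|_\infty+\|\nabla^2\varphi\|_\infty)\|\nabla w\|_{L^{q^\prime}(\Rd)}$. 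Differentiating, $\nabla V$ splits into a term of the form $(\nabla w)\,\nabla\varphi$, immediately bounded by $\|\nabla\varphi\|_\infty\|\nabla w\|_{L^{q^\prime}}$, and a term $w\,\nabla^2\varphi$; together with $V=w\,\nabla\varphi$ itself, the two remaining contributions involve $w$ rather than $\nabla w$.

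This last point is the main obstacle: an element of $\hW^{1,q^\prime}$ is controlled only through its gradient and is defined merely modulo constants, whereas the lower-order terms genuinely require $w$ on $K$. I would resolve it using the gauge freedom noted above, namely that $\langle\text{div}(u\varphi),w\rangle$ is unchanged under $w\mapsto w+c$. Fixing a ball $B$ with $\overline\Omega\subset B$ and replacing $w$ by $w-\bar w_B$, the Poincar\'e--Wirtinger inequality yields $\|w-\bar w_B\|_{L^{q^\prime}(B)}\le C\,\text{diam}(\Omega)\,\|\nabla w\|_{L^{q^\prime}(\Rd)}$; since $K\subset B$, this controls $\|w\,\nabla\varphi\|_{L^{q^\prime}}$ and $\|w\,\nabla^2\varphi\|_{L^{q^\prime}}$ by the right-hand side, completing the bound on $\|V\|_{W^{1,q^\prime}_0(\Omega)}$. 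Taking the supremum over admissible $w$ gives the claim. Note that the constant is uniform in $\varphi$: although $K$ varies, the Poincar\'e constant depends only on the fixed ball $B$, hence only on $\text{diam}(\Omega)$, which is precisely where the stated geometric dependence of $C$ enters.
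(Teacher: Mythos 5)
Your proof is correct and follows essentially the same route as the paper's: both exploit $\mathrm{div}(u)=0$ to write $\mathrm{div}(u\varphi)=u\cdot\nabla\varphi$, pair $u$ against the compactly supported field $w\,\nabla\varphi\in W^{1,q'}_0(\Omega;\Cd)$ via the $W^{-1,q}(\Omega)$--$W^{1,q'}_0(\Omega)$ duality, and close the lower-order terms by subtracting a constant from the test function (legitimate since $\int_{\Rd}\mathrm{div}(u\varphi)=0$) and invoking a Poincar\'e inequality. The only cosmetic difference is that you subtract the mean over a ball $B\supset\overline\Omega$ and use Poincar\'e--Wirtinger on $B$, whereas the paper subtracts $\fint_\Omega h$ and uses the Poincar\'e inequality in $\Omega$ itself, which is why its constant carries the Lipschitz character of $\Omega$ while yours needs only $\mathrm{diam}(\Omega)$.
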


\begin{proof}
Let $h\in \hW^{1, q^\prime }(\Rd; \C)$.
Note that
$$
\int_{\Rd} \text{\rm div}(u\varphi) \cdot h
  =\int_\Omega (u \cdot \nabla \varphi ) \left( h -\fint_\Omega h\right), 
$$
where we have used the assumption $\text{\rm div}(u)=0$ in $\Omega$.
It follows that 
$$
\aligned
\left| \int_{\Rd} \text{\rm div}(u\varphi) \cdot h \right|
 & \le \| u \|_{W^{-1, q}(\Omega)}
\| \nabla \varphi (h-\fint_\Omega h) \|_{W_0^{1, q^\prime}(\Omega)}\\
&\le C ( \|\nabla \varphi \|_\infty
+ \| \nabla^2 \varphi \|_\infty) \| u \|_{W^{-1, q}(\Omega)} \| \nabla h \|_{L^{q^\prime} (\Rd)},
\endaligned
$$
where we have used a Poincar\'e inequality in $\Omega$. This gives \eqref{B-1-0}.
\end{proof}

\begin{remark}\label{re-B-1}
Let $u$ be the same as in Lemma  \ref{Lemma-B-1}.
Suppose $\varphi\in C_0^\infty (B(z, 2r_0); \R)$, where $z\in \partial \Omega$ and $\Omega \cap B(z, 2r_0)$ satisfies 
\eqref{B-1}-\eqref{B-2}.
Let $W^{-1, q}_0(\Omega; \C^d)$ denote the dual of $W^{1, q^\prime}(\Omega; \C^d)$.
Then
\begin{equation}\label{B-1-00}
\| \text{\rm div} (u \varphi)\|_{\hW^{-1, q}(D)}
\le C  (\|\nabla \varphi\|_\infty + \| \nabla^2 \varphi \|_\infty) \| u \|_{W_0^{-1, q}(\Omega)},
\end{equation}
where $D$ is given by \eqref{B-1}-\eqref{B-2}.
To see this, we note that for any $h\in \hW^{1, q^\prime}(D; \C)$, 
$$
\int_{D} \text{\rm div}(u\varphi) \cdot h
  =\int_\Omega (u \cdot \nabla \varphi ) \left( h -\fint_\Omega h\right), 
$$
where we have used the assumptions that $\text{\rm div}(u)=0$ in $\Omega$ and $u=0$ on $\partial \Omega$.
\end{remark}

\begin{lemma}\label{Lemma-B-2}
Let $1<q<\infty$. Then for any $p\in L_0^q(\Omega; \C)$, 
\begin{equation}\label{B-2-0}
\| p   \|_{L^q(\Omega)}
\le C \|  \nabla  p \|_{W^{-1, q}(\Omega)}, 
\end{equation}
where $C$ depends on $d$, $q$, diam$(\Omega)$ and the Lipschitz character of $\Omega$.
\end{lemma}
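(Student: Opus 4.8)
The plan is to prove \eqref{B-2-0} by a duality argument that reduces the estimate to the solvability of the divergence equation on $\Omega$ (Bogovskii's theorem). By the duality between $L^q(\Omega;\C)$ and $L^{q'}(\Omega;\C)$,
$$
\|p\|_{L^q(\Omega)}
=\sup\left\{ \left| \int_\Omega p\, g\, \right| : \ g\in L^{q'}(\Omega;\C),\ \|g\|_{L^{q'}(\Omega)}\le 1 \right\}.
$$
Fix such a $g$ and set $h=g-\fint_\Omega g$, so that $\int_\Omega h=0$ and $\|h\|_{L^{q'}(\Omega)}\le 2\|g\|_{L^{q'}(\Omega)}$. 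Since $\int_\Omega p=0$, we have $\int_\Omega p\,g=\int_\Omega p\,h$, and it therefore suffices to bound the latter quantity.

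The key input is Bogovskii's theorem: on a bounded Lipschitz domain, for any mean-zero $h\in L^{q'}_0(\Omega;\C)$ there exists $v\in W^{1,q'}_0(\Omega;\Cd)$ with $\text{\rm div}(v)=h$ in $\Omega$ and
$$
\|v\|_{W^{1,q'}_0(\Omega)}\le C\,\|h\|_{L^{q'}(\Omega)},
$$
where $C$ depends on $d$, $q$ and the Lipschitz character of $\Omega$; I would simply cite this result. Applying it to $h$, and recalling that $\nabla p\in W^{-1,q}(\Omega;\Cd)$, as an element of the dual of $W^{1,q'}_0(\Omega;\Cd)$, acts by $\langle \nabla p, w\rangle=-\int_\Omega p\,\text{\rm div}(w)$, I obtain
$$
\int_\Omega p\,h=\int_\Omega p\,\text{\rm div}(v)=-\langle \nabla p, v\rangle,
$$
whence
$$
\left|\int_\Omega p\,g\right|
\le \|\nabla p\|_{W^{-1,q}(\Omega)}\,\|v\|_{W^{1,q'}_0(\Omega)}
\le C\,\|\nabla p\|_{W^{-1,q}(\Omega)}\,\|g\|_{L^{q'}(\Omega)}.
$$
Taking the supremum over all admissible $g$ yields \eqref{B-2-0}.

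The only substantive ingredient is the solvability of $\text{\rm div}(v)=h$ together with the quantitative $W^{1,q'}_0$ bound, which is exactly where the geometry of $\Omega$ enters; this holds for all $1<q'<\infty$ on bounded Lipschitz (hence $C^1$) domains, and once it is granted the remainder is a routine duality computation requiring no compactness. An alternative route is to first invoke the Ne\v{c}as inequality $\|p\|_{L^q(\Omega)}\le C(\|p\|_{W^{-1,q}(\Omega)}+\|\nabla p\|_{W^{-1,q}(\Omega)})$ and then absorb the lower-order term $\|p\|_{W^{-1,q}(\Omega)}$ by using the mean-zero normalization together with the compact embedding $L^q(\Omega)\hookrightarrow W^{-1,q}(\Omega)$ in a standard compactness–contradiction argument; the divergence-equation approach above is more direct and yields the constant dependence claimed in the statement.
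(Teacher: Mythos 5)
Your proof is correct and takes essentially the same route as the paper: both rest on the Bogovskii solvability of $\text{\rm div}(v)=h$ in $W^{1,q'}_0(\Omega;\Cd)$ with the quantitative bound (the paper cites Galdi, Theorem III.3.1) combined with the duality pairing $\int_\Omega p\,\text{\rm div}(v)=-\langle\nabla p, v\rangle$. The only cosmetic difference is that the paper tests directly against the single extremal function $\overline{p}\,|p|^{q-2}$ (whose mean-zero correction is absorbed by $\int_\Omega p=0$), whereas you run the generic duality supremum over all $g$ with $\|g\|_{L^{q'}(\Omega)}\le 1$.
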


\begin{proof}

Since $\Omega$ is a bounded Lipschitz domain and $\overline{p} |p|^{q-2} \in L^{q^\prime}(\Omega; \C)$,
 there exists $v\in W^{1, q^\prime}_0 (\Omega; \Cd)$  such that
$$
\text{\rm div} (v)=
\overline{p}  | p|^{q-2} -\fint_\Omega \overline{p} |p|^{q-2} \quad \text{ in } \Omega
$$
(see \cite[Theorem III.3.1]{Galdi}).
Moreover, the function $v$ satisfies 
\begin{equation}\label{B-2-1}
\| v \|_{W^{1, q^\prime}(\Omega)}
\le C \| \overline{p} |p|^{q-2}\|_{L^{q^\prime}(\Omega)}
= C \| p \|_{L^{q}(\Omega)}^{q-1}.
\end{equation}
Using 
$$
\int_\Omega |p|^q
=\int_\Omega p\cdot  \text{\rm div} (v), 
$$
we obtain 
$$
\aligned
\|p\|_{L^q(\Omega)}^q
&\le \| \nabla p \|_{W^{-1, q}(\Omega)} \| v \|_{W_0^{1, q^\prime}(\Omega)}\\
& \le C \| \nabla p \|_{W^{-1, q}(\Omega)}
\| p \|_{L^q(\Omega)}^{q-1},
\endaligned
$$
where we have used \eqref{B-2-1} for the last inequality.
This yields \eqref{B-2-0}.
\end{proof}

The following lemma contains a key a priori estimate.
Recall that $W_0^{-1, q}(\Omega; \C^d)$ denotes the dual of $W^{1, q^\prime} (\Omega; \C^d)$.

\begin{lemma}\label{Lemma-B-3}
Let $1< q< \infty$ and $\lambda\in \Sigma_\theta $.
Let $(u, p)\in W^{1, q}_0 (\Omega; \Cd) \times L_0^q(\Omega; \C)$ be a solution of \eqref{B-eq-0}
with $F\in  L^q(\Omega; \Cd)$,  $f\in L^q(\Omega; \Cdd)$ and $g=0$.
There exist $\lambda_0>1$ and $C>0$, depending only on $d$, $q$, $\theta$, diam$(\Omega)$ and the $C^1$ character of  $\Omega$,
such that   if $|\lambda|\ge \lambda_0$, then 
\begin{equation}\label{B-3-00}
|\lambda|^{1/2} \| \nabla u \|_{L^q(\Omega)}
+ |\lambda| \| u \|_{L^q(\Omega)}
\le C \left\{
\| F \|_{L^q(\Omega)} + |\lambda|^{1/2} \| f \|_{L^q(\Omega)} 
+ |\lambda | \| u \|_{W_0^{-1, q}(\Omega)} \right\}.
\end{equation}
\end{lemma}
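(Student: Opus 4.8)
The plan is to prove the a priori estimate \eqref{B-3-00} by a localization argument, patching together the interior estimate (from Theorem \ref{R-thm-1} on $\R^d$) and the boundary estimate (from Theorem \ref{thm-G} on graph domains $\H_\psi$). First I would fix $c_0 = c_0(d,q,\theta)$ small enough that Theorem \ref{thm-G} applies, and let $r_0 > 0$ be the corresponding radius from \eqref{B-1}--\eqref{B-2}. I would then choose a finite cover of $\overline{\Omega}$ by balls: boundary balls $B(z_i, r_0)$ with $z_i \in \partial\Omega$, each satisfying \eqref{B-1}--\eqref{B-2} with an associated graph domain $D_i = O_i \H_{\psi_i}$, together with interior balls compactly contained in $\Omega$. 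Let $\{\varphi_i\}$ be a subordinate partition of unity with $\|\nabla \varphi_i\|_\infty + \|\nabla^2 \varphi_i\|_\infty \le C(r_0)$.

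\medskip

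The core of the argument is to multiply the equation \eqref{B-eq-0} by each $\varphi_i$ and record the equation satisfied by $(u\varphi_i, p\varphi_i)$. A direct computation gives
\begin{equation}\label{B-loc}
\aligned
-\Delta(u\varphi_i) + \nabla(p\varphi_i) + \lambda(u\varphi_i)
&= F\varphi_i + \text{\rm div}(f\varphi_i) - f\nabla\varphi_i \\
&\quad - 2\nabla u\,\nabla\varphi_i - u\,\Delta\varphi_i + p\,\nabla\varphi_i, \\
\text{\rm div}(u\varphi_i) &= u\cdot\nabla\varphi_i,
\endaligned
\end{equation}
where I have used $\text{\rm div}(u)=0$. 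For an interior patch I would apply Theorem \ref{R-thm-1} (extending by zero to $\R^d$), and for a boundary patch Theorem \ref{thm-G} (after the orthogonal change of variables of Remark \ref{re-G}), since $u\varphi_i$ is supported in $D_i$ and $\|\nabla^\prime\psi_i\|_\infty \le c_0$. The output is an estimate for $(|\lambda|^{1/2}\|\nabla(u\varphi_i)\|_q + |\lambda|\|u\varphi_i\|_q)$ in terms of the right-hand side data. Summing over $i$ controls $|\lambda|^{1/2}\|\nabla u\|_q + |\lambda|\|u\|_q$ by $\|F\|_q + |\lambda|^{1/2}\|f\|_q$ plus error terms. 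The pressure enters through the term $p\,\nabla\varphi_i$; I would estimate its $W^{-1,q}$ norm using $\|p\,\nabla\varphi_i\|_{W^{-1,q}} \le C\|p\|_{W^{-1,q}}$-type bounds, and then invoke Lemma \ref{Lemma-B-2} to return $\|p\|_{L^q}$ to $\|\nabla p\|_{W^{-1,q}}$, which the equation itself bounds by the other terms.

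\medskip

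The error terms all carry \emph{lower-order} factors relative to the left-hand side: the commutator terms $\nabla u\,\nabla\varphi_i$, $u\,\Delta\varphi_i$ cost only $\|\nabla u\|_q$ and $\|u\|_q$ (no $\lambda$ weight), and the divergence term $u\cdot\nabla\varphi_i$ is controlled in $B^q_\psi$ using Lemma \ref{Lemma-B-1} and Remark \ref{re-B-1}, which bound $\|\text{\rm div}(u\varphi_i)\|_{\hW^{-1,q}}$ by $\|u\|_{W_0^{-1,q}(\Omega)}$. The strategy is then an absorption argument: choosing $|\lambda| \ge \lambda_0$ large, the terms $\|\nabla u\|_q$ and $\|u\|_q$ on the right carry prefactors that are a fixed constant times $|\lambda|^{-1/2}$ and $|\lambda|^{-1}$ respectively relative to the weighted left-hand side, so for $\lambda_0$ large they can be absorbed into the left. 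What survives on the right is precisely $\|F\|_q + |\lambda|^{1/2}\|f\|_q + |\lambda|\|u\|_{W_0^{-1,q}(\Omega)}$, which is \eqref{B-3-00}.

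\medskip

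The main obstacle I expect is the careful bookkeeping of the pressure and of the weak-negative-norm terms so that every error genuinely comes with the right power of $|\lambda|$ to be absorbed. In particular the term $u\cdot\nabla\varphi_i$ must be measured in the space $B^q_\psi = L^q \cap \hW^{-1,q}$, and its $\hW^{-1,q}$ component is exactly what produces the $|\lambda|\|u\|_{W_0^{-1,q}(\Omega)}$ term on the right via Lemma \ref{Lemma-B-1} and Remark \ref{re-B-1}; getting this to have the correct scaling (rather than a worse $|\lambda|^{1/2}\|u\|_{L^q}$ that cannot be absorbed) is the delicate point. A secondary technical issue is that Theorem \ref{thm-G} is stated for graph domains $\H_{\psi_i}$ while the localized solution lives on $\Omega$; one must check that $u\varphi_i$, extended appropriately, is an admissible datum and that the norms transfer, which is why the $\hW^{-1,q}(D)$ estimate of Remark \ref{re-B-1} (rather than a global $\R^d$ estimate) is needed near the boundary.
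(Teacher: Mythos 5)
Your proposal is correct and follows essentially the same route as the paper's proof of Lemma \ref{Lemma-B-3}: localize with cutoffs, apply Theorem \ref{R-thm-1} on interior patches and Theorem \ref{thm-G} (with Remark \ref{re-G}) on boundary patches, control $\text{\rm div}(u\varphi_i)$ via Lemma \ref{Lemma-B-1} and Remark \ref{re-B-1}, recover $\| p \|_{L^q(\Omega)}$ from Lemma \ref{Lemma-B-2} together with the equation $\nabla p = \Delta u - \lambda u + F + \text{\rm div}(f)$, and absorb the unweighted error terms for $|\lambda| \ge \lambda_0$. One small slip in your side commentary: the term $|\lambda|^{1/2}\| u \|_{L^q(\Omega)}$ \emph{is} absorbable for large $|\lambda|$; the genuinely dangerous term a naive $\hW^{-1,q}$ bound on $u\cdot\nabla\varphi_i$ would produce is $|\lambda|\,\| u \|_{L^q(\Omega)}$, and that is exactly why the $W_0^{-1,q}$ bound of Lemma \ref{Lemma-B-1} and Remark \ref{re-B-1}, which exploits $\text{\rm div}(u)=0$ and $u=0$ on $\partial\Omega$, is required.
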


\begin{proof}

Let  $z\in \overline{\Omega}$ and $r_0>0$ be small.
Let $\varphi \in C_0^\infty( B(z, 2r_0); \R)$ such that
$\varphi =1$ in $B(z, r_0)$ and
$|\nabla \varphi |\le C r_0^{-1}$, $|\nabla^2 \varphi|\le C r_0^{-2}$.
A computation shows that
\begin{equation}\label{L-eq}
\left\{
\aligned
-\Delta (u \varphi) +\nabla (p \varphi) + \lambda u \varphi
 & =F \varphi + \text{\rm div}(f \varphi) - f (\nabla \varphi) + p \nabla \varphi
-2\,  \text{\rm div} (u \otimes \nabla\varphi) + u \Delta \varphi,\\
\text{\rm div} (u \varphi)
 & =u \cdot \nabla \varphi.
\endaligned
\right.
\end{equation}
We consider two cases: (1) $B(z, 2r_0)\subset \Omega$ and (2) $z\in \partial\Omega$.

Case (1). Suppose $B(z, 2r_0)\subset \Omega$.
Then  the Stokes equations in  \eqref{L-eq} hold in $\Rd$.
Since $u\varphi \in W^{1, q}(\Rd; \Cd)$ and $p\varphi \in L^q(\Rd; \C)$, 
 it  follows by  Theorem \ref{R-thm-1} that 
$$
\aligned
 & |\lambda|^{1/2}  \| \nabla (u \varphi) \|_{L^q(\Rd)}
+ |\lambda| \| u \varphi \|_{L^q(\Rd)}\\
& \le C \Big\{ 
\| F \varphi \|_{L^q(\Rd)} + |\lambda|^{1/2} \| f \varphi\|_{L^q(\Rd)} + \| f \nabla \varphi \|_{L^q(\Rd)} 
+ \| p \nabla \varphi \|_{L^q(\Rd)}\\
& \qquad \qquad\qquad
+ \| u \Delta \varphi \|_{L^q(\Rd)}
+ |\lambda|^{1/2} \| u \nabla \varphi \|_{L^q(\Rd)}
+  |\lambda| \| \text{\rm div}(u \varphi)  \|_{\hW^{-1, q} (\Rd)} \Big\}.
\endaligned
$$
This leads to 
\begin{equation}\label{B-3-0}
\aligned
& |\lambda|^{1/2} \| \nabla u \|_{L^q(B(z, r_0))} +|\lambda| \| u \|_{L^q(B(z, r_0))}\\
&\le C r_0^{-2}
\Big\{ \| F \|_{L^q(\Omega)}
+(1+|\lambda|^{1/2}) \| f \|_{L^q(\Omega)} \\
& \qquad\qquad\qquad
+ \| p \|_{L^q(\Omega)} 
+(1+ |\lambda|^{1/2} ) \| u \|_{L^q(\Omega)}
+ |\lambda| \| u \|_{W^{-1, q}(\Omega)} \Big\},
\endaligned
\end{equation}
where we have used Lemma \ref{Lemma-B-1} and  the fact $\varphi=1$ in $B(z, r_0)$.

Case (2).  Suppose $z\in \partial \Omega$.
Let $D$ be given by \eqref{B-1}-\eqref{B-2}.
We assume $r_0$ is sufficiently small so that $\|\nabla^\prime \psi \|_\infty< c_0$, where $c_0=c_0(d, q, \theta)>0$
is given by Theorem \ref{thm-G}.
Note that  $u\varphi\in W^{1, q}_0(D; \Cd)$, $p\varphi \in L^q(D; \C)$, and 
\eqref{L-eq} holds in $D$.
It follows by Theorem \ref{thm-G} and Remark \ref{re-G} that
$$
\aligned
 & |\lambda|^{1/2} \| \nabla (u \varphi ) \|_{L^q(D)}
+ |\lambda| \| u \varphi \|_{L^q(D)}\\
 & \le C \Big\{
\| F \varphi \|_{L^q(D)} +  |\lambda|^{1/2} \| f\varphi \|_{L^q(D)} + \| f \nabla \varphi \|_{L^q(D)}  
+ \| p \nabla \varphi\|_{L^q(D)} + \| u \Delta \varphi\|_{L^q(D)}\\
& \qquad \qquad + |\lambda|^{1/2} \| u \nabla \varphi \|_{L^q(D)}
+ |\lambda| \| \text{div}(u \varphi)\|_{\hW^{-1, q}(D)} \Big\},
\endaligned
$$
which yields
\begin{equation}\label{B-3-1}
\aligned
& |\lambda|^{1/2} \| \nabla u \|_{L^q(\Omega \cap B(z, r_0))} +|\lambda| \| u \|_{L^q(\Omega \cap B(z, r_0))}\\
&\quad \le C r_0^{-2}
\Big\{ \| F \|_{L^q(\Omega)} + (1+ |\lambda|^{1/2} ) \| f\|_{L^q(\Omega)} 
+ \| p \|_{L^q(\Omega)} \\
&\qquad\qquad\qquad
+(1+ |\lambda|^{1/2} ) \| u \|_{L^q(\Omega)}
+ |\lambda| \| u \|_{W_0^{-1, q}(\Omega)} \Big\},
\endaligned
\end{equation}
where we have used the estimate in Remark \ref{re-B-1} and  the fact $\varphi=1$ in $B(z, r_0)$.

We now cover $\Omega$ by a finite number of balls $\{ B(z_k, r_0)\}$ with the properties that either $B(z_k, 2r_0)\subset \Omega$ or
$z_k \in \partial \Omega$.
In view of \eqref{B-3-0} and \eqref{B-3-1}, by summation, we deduce that 
\begin{equation}\label{B-3-2}
\aligned
& |\lambda|^{1/2} \| \nabla u \|_{L^q(\Omega)} +|\lambda| \| u \|_{L^q(\Omega)}\\
&\le C 
\left\{ \| F \|_{L^q(\Omega)} + (1+|\lambda| ^{1/2} ) \| f \|_{L^q(\Omega)} 
+ \| p \|_{L^q(\Omega)} 
+ (1+|\lambda| ^{1/2 }) \| u \|_{L^q(\Omega)}
+ |\lambda| \| u \|_{W_0^{-1, q}(\Omega)} \right\}\\
&\le C 
\left\{ \| F \|_{L^q(\Omega)}+ (1+|\lambda|^{1/2} ) \| f \|_{L^q(\Omega)}  +\| \nabla u \|_{L^q(\Omega)}  
+(1+ |\lambda|^{1/2})  \| u \|_{L^q(\Omega)}
+  |\lambda|  \| u \|_{W_0^{-1, q}(\Omega)} \right\},
\endaligned
\end{equation}
where we have used Lemma \ref{Lemma-B-2} 
and  the equation $\nabla p=\Delta u -\lambda u +F+\text{\rm div}(f) $
for the last  inequality.
 The constant $C$ in \eqref{B-3-2}  depends only on $d$, $q$, $\theta$ and $\Omega$.
We obtain \eqref{B-3-00} by choosing $\lambda_0>1$ so large that $|\lambda| \ge 4C |\lambda|^{1/2}$ for $|\lambda|\ge \lambda_0$.
\end{proof}

\begin{lemma}\label{Lemma-B-4}
Let $2\le  q< \infty$ and   $\lambda\in \Sigma_\theta $.
Let $(u, p)\in W^{1, q}_0 (\Omega; \Cd) \times L_0^q(\Omega; \C)$ be a solution of \eqref{B-eq-0}
with $F\in L^q(\Omega; \Cd)$,  $f\in L^q(\Omega; \Cdd)$ and $g=0$.
Then,
\begin{equation}\label{B-4-0}
(|\lambda|+1) ^{1/2} \| \nabla u \|_{L^q(\Omega)}
+ (|\lambda|+1)  \| u \|_{L^q(\Omega)}
\le C \left\{ 
\| F \|_{L^q(\Omega)} + (|\lambda|+1)^{1/2} \| f \|_{L^q(\Omega)} \right\}, 
\end{equation}
where $C>0$ depends on $d$, $q$, $\theta $,  diam$(\Omega)$ and the $C^1$ character of $\Omega$.
\end{lemma}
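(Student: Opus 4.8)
The plan is to upgrade the a priori estimate of Lemma~\ref{Lemma-B-3} in two respects: to eliminate the lower-order term $|\lambda|\,\|u\|_{W_0^{-1,q}(\Omega)}$ on its right-hand side, and to enlarge the range of $\lambda$ from $|\lambda|\ge\lambda_0$ to all of $\Sigma_\theta$, which is what produces the factors $(|\lambda|+1)$ in \eqref{B-4-0}. The natural anchor is the case $q=2$, which I would settle directly by an energy argument valid for every $\lambda\in\Sigma_\theta$: testing \eqref{B-eq-0} (with $g=0$) against $\bar u$ makes the pressure term drop, since $\mathrm{div}(u)=0$ and $u|_{\partial\Omega}=0$, leaving $\|\nabla u\|_{L^2(\Omega)}^2+\lambda\|u\|_{L^2(\Omega)}^2=\int_\Omega F\cdot\bar u-\int_\Omega f:\nabla\bar u$. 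The cone condition $\lambda\in\Sigma_\theta$ gives $\|\nabla u\|_{L^2(\Omega)}^2+|\lambda|\,\|u\|_{L^2(\Omega)}^2\le C_\theta(\|F\|_{L^2}\|u\|_{L^2}+\|f\|_{L^2}\|\nabla u\|_{L^2})$, and Young's inequality together with the Poincar\'e inequality yields \eqref{B-4-0} for $q=2$ and all $\lambda$. This is precisely where the hypothesis $q\ge 2$ enters: the whole scheme is anchored at $L^2$ and then pushed upward in the integrability exponent.

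For $|\lambda|\ge\lambda_0$ and $q>2$ I would argue by induction on $q$, using Lemma~\ref{Lemma-B-3} to reduce everything to controlling $|\lambda|\,\|u\|_{W_0^{-1,q}(\Omega)}$. The key observation is that this term, although it carries the full weight $|\lambda|$, involves a \emph{negative}-order norm and can therefore be traded for a lower-integrability $L^{q_1}$-norm: by duality $\|u\|_{W_0^{-1,q}(\Omega)}=\sup\{|\int_\Omega u\cdot w|:\|w\|_{W^{1,q'}(\Omega)}\le 1\}$, and the Sobolev embedding $W^{1,q'}(\Omega)\hookrightarrow L^{q_1'}(\Omega)$ for a suitable $q_1<q$ (a Sobolev predecessor of $q$, with $q_1\ge 2$) gives $\|u\|_{W_0^{-1,q}(\Omega)}\le C\|u\|_{L^{q_1}(\Omega)}$ by H\"older. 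Since $u$ is simultaneously the solution at the exponent $q_1$ (by uniqueness and $L^q(\Omega)\hookrightarrow L^{q_1}(\Omega)$ on the bounded domain), the induction hypothesis \eqref{B-4-0} at $q_1$ bounds $|\lambda|\,\|u\|_{L^{q_1}}\le C\{\|F\|_{L^{q_1}}+(|\lambda|+1)^{1/2}\|f\|_{L^{q_1}}\}\le C\{\|F\|_{L^q}+(|\lambda|+1)^{1/2}\|f\|_{L^q}\}$. Feeding this back into Lemma~\ref{Lemma-B-3} removes the offending term, and finitely many Sobolev steps starting from $q=2$ reach any finite $q$.

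For $\lambda\in\Sigma_\theta$ with $|\lambda|\le\lambda_0$ the weights satisfy $(|\lambda|+1)\approx 1$, so \eqref{B-4-0} amounts to the coercive bound $\|\nabla u\|_{L^q}+\|u\|_{L^q}\le C(\|F\|_{L^q}+\|f\|_{L^q})$. I would obtain this by a compactness-and-uniqueness argument: assuming it fails, one extracts a normalized sequence of solutions $(u_n,p_n)$ with parameters $\lambda_n\to\lambda_*\in\overline{\Sigma_\theta}$ and data tending to $0$; Lemma~\ref{Lemma-B-2} together with $\nabla p=\Delta u-\lambda u+F+\mathrm{div}(f)$ bounds $p_n$ in $L^q_0(\Omega)$, and Rellich's theorem gives $u_n\to u$ in $L^q(\Omega)$. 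The limit $(u,p)$ solves the homogeneous problem, and since $q\ge 2$ forces $u\in W_0^{1,2}(\Omega)$, the $L^2$ energy identity shows $u=0$; upgrading $\nabla u_n$ to strong $L^q$-convergence via the $L^q$-solvability of the stationary Stokes system on $\Omega$ then contradicts the normalization. (Alternatively, one may run the same $q$-bootstrap as above, now based on the stationary Stokes a priori estimate \cite{Galdi} in place of Lemma~\ref{Lemma-B-3}.)

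The hard part is the removal of $|\lambda|\,\|u\|_{W_0^{-1,q}(\Omega)}$. This quantity is genuinely lower order only through its negative Sobolev exponent, not through its $\lambda$-weight, so a naive absorption into the left-hand side fails; the mechanism that makes it work is the duality trade $\|u\|_{W_0^{-1,q}}\le C\|u\|_{L^{q_1}}$ combined with induction \emph{downward} in integrability toward the $L^2$ base, and this is exactly what confines the present lemma to $q\ge 2$. A secondary technical point is the passage from weak to strong convergence of $\nabla u_n$ in the bounded-$\lambda$ regime, which genuinely requires the $L^q$-solvability of the stationary Stokes problem rather than the mere $L^2$ energy estimate.
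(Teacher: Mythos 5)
Your base case $q=2$ and your treatment of the regime $|\lambda|\ge \lambda_0$ are correct and coincide in substance with the paper's proof: there, the a priori estimate of Lemma \ref{Lemma-B-3} is invoked at every intermediate exponent $s\in[2,q]$, and the term $|\lambda|\,\| u\|_{W_0^{-1,s}(\Omega)}$ is absorbed through the embedding $L^t(\Omega;\Cd)\subset W_0^{-1,s}(\Omega;\Cd)$ with $\frac{1}{t}=\frac{1}{s}+\frac{1}{d}$, anchored at the energy estimate via $L^2(\Omega;\Cd)\subset W_0^{-1,s}(\Omega;\Cd)$ for $2<s\le \frac{2d}{d-2}$, and iterated finitely many times --- precisely your downward induction through Sobolev predecessors. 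One small simplification: you do not need uniqueness to apply the induction hypothesis at $q_1$; since $\Omega$ is bounded, $(u,p)\in W_0^{1,q}(\Omega;\Cd)\times L^q_0(\Omega;\C)$ automatically lies in the $q_1$-class, and \eqref{B-4-0} at $q_1$ is an a priori bound for any such solution.

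For $|\lambda|\le\lambda_0$, however, your compactness-and-uniqueness argument diverges from the paper and has two genuine defects. First, the upgrade of $\nabla u_n$ from weak to strong $L^q$ convergence requires the $W^{1,q}$ solvability of the stationary Stokes system in the bounded $C^1$ domain $\Omega$, i.e.\ the estimate \eqref{W-1-q}; but in this paper that estimate is obtained in Remark \ref{re-B-2} by letting $\lambda\to 0$ in Theorem \ref{thm-B-1}, whose proof rests on the present lemma, so within the paper's architecture your argument is circular. You can escape by importing \eqref{W-1-q} from \cite{Mitrea-2004}, but that makes the proof non-self-contained exactly where the paper's perturbation scheme was designed to rederive it. Second, and more seriously, a contradiction argument run on the fixed domain $\Omega$ only produces a constant depending on $\Omega$ itself; it does not deliver the asserted dependence on $d$, $q$, $\theta$, diam$(\Omega)$ and the $C^1$ character alone. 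This uniformity is not cosmetic: in Step 1 of the proof of Theorem \ref{thm-B-1} one constructs smooth approximating domains $\Omega_k\subset\Omega$ and applies \eqref{B-4-0} on each $\Omega_k$ with a constant independent of $k$, which your compactness proof does not furnish (recovering it would require rerunning the compactness over the whole class of domains with a fixed $C^1$ character, a substantially more delicate argument involving convergence of domains). The paper avoids both problems with a direct shift trick: rewrite the system as \eqref{B-4-2}, with resolvent parameter $\lambda+2\lambda_0\in\Sigma_\theta$ satisfying $|\lambda+2\lambda_0|>\lambda_0$ and right-hand side $F+\text{div}(f)+2\lambda_0 u$; the already-proved large-$|\lambda|$ case then yields $\|\nabla u\|_{L^q(\Omega)}\le C\{\|F\|_{L^q(\Omega)}+\|f\|_{L^q(\Omega)}+\|u\|_{L^q(\Omega)}\}$, and $\|u\|_{L^q(\Omega)}$ is controlled from the $q=2$ energy estimate via $W_0^{1,2}(\Omega;\Cd)\subset L^{2d/(d-2)}(\Omega;\Cd)$ and a finite bootstrap, with a Poincar\'e inequality closing the estimate. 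Every constant in that route manifestly carries the stated dependence, so you should replace your third paragraph by this shift argument (or supply the uniform-in-domain compactness it would otherwise require).
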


\begin{proof}
The case $q=2$ is well known and follows from the energy estimates.
For $q>2$, we  first consider the case $|\lambda|\ge \lambda_0$,
where $\lambda_0>1$ is given by Lemma \ref{Lemma-B-3}.
Since $\Omega$ is bounded,  by Lemma \ref{Lemma-B-3}, the estimate
\begin{equation}\label{B-4-1}
|\lambda|^{1/2} \|\nabla u \|_{L^s (\Omega)}
+ |\lambda| \| u \|_{L^s(\Omega)}
\le C \left\{ \| F \|_{L^s(\Omega)} + |\lambda|^{1/2} \| f \|_{L^s(\Omega)} 
+  |\lambda| \| u\|_{W_0^{-1, s} (\Omega)} \right\}
\end{equation}
holds for any $s\in [2, q]$.
By Sobolev imbedding, $L^t (\Omega; \C^d) \subset W_0^{-1, s}(\Omega; \Cd)$, where
$1<t<d$ and $\frac{1}{t }=\frac{1}{ s} +\frac{1}{d}$.
In particular,  if $2< s\le  \frac{2d}{d-2}$, then $L^2(\Omega;  \Cd) \subset W_0^{-1, s}(\Omega; \Cd)$ and 
$$
\aligned
|\lambda| \| u \|_{W_0^{-1, s}(\Omega)}
 & \le C |\lambda| \| u \|_{L^2(\Omega)}\\
&\le C\left\{  \| F \|_{L^2(\Omega)} + |\lambda|^{1/2} \| f\|_{L^2(\Omega)} \right\} \\
&\le C\left\{  \| F \|_{L^s(\Omega)} + |\lambda|^{1/2} \| f\|_{L^s(\Omega)} \right\}.
\endaligned
$$
This, together with \eqref{B-4-1},  gives \eqref{B-4-0} for $2< q \le  \frac{2d}{d-2}$.
By a bootstrapping  argument,  one may show that  the estimate \eqref{B-4-0} holds for any $2< q< \infty$
 in a finite number of steps.

We now consider the case $|\lambda|<\lambda_0$.
We rewrite the Stokes equations as
\begin{equation}\label{B-4-2}
\left\{
\aligned
-\Delta u +\nabla p + (\lambda + 2\lambda_0) u  & = F +\text{\rm div}(f) + 2\lambda_0 u,\\
\text{\rm div} (u) & =0.
\endaligned
\right.
\end{equation}
Since $\lambda+2 \lambda_0 \in \Sigma_\theta $ and $|\lambda +2\lambda_0|> \lambda_0$,
it follows from the previous case that
\begin{equation}\label{B-4-3}
\|\nabla u \|_{L^q(\Omega)}
\le C \left\{ \| F \|_{L^q(\Omega)} +\| f\|_{L^q(\Omega)} + \| u \|_{L^q(\Omega)}\right\}.
\end{equation}
Since $W_0^{1, 2}(\Omega; \Cd)\subset L^s(\Omega; \Cd)$ for $s=\frac{2d}{d-2}$, we obtain 
$$
\aligned
\|\nabla u \|_{L^q(\Omega)}
& \le C \left\{ \| F \|_{L^q(\Omega)} + \| f\|_{L^q(\Omega)}  + \|\nabla u \|_{L^2(\Omega)} \right\}\\
& \le C \left\{ \| F \|_{L^q(\Omega)} + \| f \|_{L^q(\Omega)} \right\}
\endaligned
$$
for $2<q\le \frac{2d}{d-2}$, where we have used the estimate \eqref{B-4-0}  for  $q=2$
for the last inequality.
 As before, a bootstrapping  argument, using  \eqref{B-4-3}, gives 
$$
\aligned
\|\nabla u \|_{L^q(\Omega)} 
&\le C\left\{  \| F \|_{L^q(\Omega)} + \| f \|_{L^q(\Omega)} \right\}
\endaligned
$$
for $2<q<\infty$ in a finite number of steps.
 This, together with a Poincar\'e inequality,  yields \eqref{B-4-0} for the case $|\lambda|< \lambda_0$.
\end{proof}

We are now in a position to give the proof of Theorem \ref{thm-B-1}.

\begin{proof}[\bf Proof of Theorem \ref{thm-B-1}]

Step 1. Consider the case  $2< q<\infty$ and $g=0$.

The uniqueness follows from the case $q=2$. To show the existence and the estimate \eqref{B-est-0},
let $F\in L^q(\Omega; \Cd)$ and $f\in L^q(\Omega; \Cdd)$.
Note  that the constant $C$ in \eqref{B-4-0}
depends only on $d$, $q$, $\theta$,  the diameter of $\Omega$ as well as the $C^1$ character of $\partial \Omega$.
As a result, we may construct  a sequence of smooth domains $\{\Omega_k \}$ such that $\Omega_k \subset \Omega$ and the
estimate \eqref{B-4-0} holds in $\Omega_k$ with a constant $C$ independent of $k$.
Let  $(u^k, p^k)$ be the unique solution in $W_0^{1, 2}(\Omega_k; \Cd)
\times L^2_0(\Omega_k; \C)$ of the Stokes system  \eqref{B-eq-0} in $\Omega_k$ 
with $g=0$, $F^k$ in the place of $F$ and $f^k$ in the place of $f$, where $F^k \in C_0^\infty(\Omega_k; \Cd)$,
$f^k \in C_0^\infty(\Omega_k; \Cdd)$ and $\|F^k -F\|_{L^q(\Omega)} + \| f^k -f \|_{L^q(\Omega) }  \to 0$.
Since  $\Omega_k$ and $F^k, f^k $  are smooth, it is well known that $(u^k, p^k)\in W^{1, q}_0(\Omega_k; \C^d) \times L_0^q(\Omega_k; \C)$ \cite{Galdi}.
We extend $(u^k, p^k)$ to $\Omega$ by zero and still denote the extension by $(u^k, p^k)$.
It follows by Lemma \ref{Lemma-B-4} that 
\begin{equation}\label{B-5-1}
(|\lambda| + 1)^{1/2}
\| \nabla u^k \|_{L^q(\Omega)}
+ ( |\lambda| +1) \| u^k \|_{L^q(\Omega)}
\le C\left\{  \| F^k \|_{L^q(\Omega_k)} + (|\lambda| +1)^{1/2} \| f ^k \|_{L^q(\Omega_k)} \right\},
\end{equation}
where $C$ depends only on $d$, $q$, $\theta$ and $\Omega$.
Note that by Lemma \ref{Lemma-B-2}, $\{ p^k \}$ is bounded in $L^q(\Omega; \C)$.
By passing to a subsequence, we may assume that 
$u^k \to u$ weakly in $W^{1, q}_0(\Omega; \Cd)$ and $p^k \to p$ weakly in $L^q(\Omega; \C)$.
It is not hard to see that $(u, p)$ is a solution of \eqref{B-eq-0} in $\Omega$  with data $(F, f)$  and  $g=0$.
By letting $k \to \infty$ in  \eqref{B-5-1}, it follows that  $u$
satisfies the estimate \eqref{B-est-0}.

\medskip

Step 2. We establish the existence and estimate \eqref{B-est-0} for $1< q< 2$ and $g=0$.

For $F, G \in C_0^\infty (\Omega; \Cd)$ and $f, h \in C_0^\infty (\Omega; \Cdd)$, 
let $ (u, p), (v, \phi)  \in W^{1, 2}_0 (\Omega; \Cd)\times L^2_0(\Omega; \C)$ be  weak solutions of \eqref{B-eq-0} in $\Omega$
with data $(F, f), (G, h) $, respectively; i.e.,
$$
\left\{
\aligned
-\Delta u +\nabla p+\lambda u & =F+\text{\rm div}(f)  & \quad & \text{ in } \Omega,\\
\text{\rm div} (u) & =0 & \quad & \text{ in } \Omega,
\endaligned
\right.
$$
$$
\left\{
\aligned
-\Delta v +\nabla \phi +\lambda v & =G + \text{\rm div}(h) & \quad & \text{ in } \Omega,\\
\text{\rm div} (v) & =0 & \quad & \text{ in } \Omega.
\endaligned
\right.
$$
Note that
$$
\int_\Omega F \cdot v -\int_\Omega f \cdot \nabla v 
=\int_\Omega \nabla u \cdot \nabla v +\lambda \int_\Omega u \cdot v
=\int_\Omega G \cdot u -\int_\Omega h \cdot \nabla u.
$$
It follows that
$$
\aligned
 & \left|  \int_\Omega G \cdot u -\int_\Omega h \cdot \nabla u\right  |
  \le \| F \|_{L^q(\Omega)} \| v \|_{L^{q^\prime}(\Omega)} +  \| f \|_{L^q(\Omega)} \| \nabla v \|_{L^{q^\prime} (\Omega)} \\
& \le C (|\lambda| +1)^{-1} 
\left\{  \| F \|_{L^q(\Omega)}
+ (|\lambda| +1)^{1/2} \| f \|_{L^q(\Omega)} \right\}
\left\{ \| G \|_{L^{q^\prime} (\Omega)}  + (|\lambda| +1)^{1/2} \| h \|_{L^{q^\prime}(\Omega)} \right\},
\endaligned
$$
where we have used the estimate,  
$$
(|\lambda| +1)^{1/2}  \| \nabla  v \|_{L^{q^\prime}(\Omega)} + (|\lambda| +1 ) \| v \|_{L^{q^\prime}(\Omega)}
\le C\left\{  \| G \|_{L^{q^\prime} (\Omega)}  + ( |\lambda | +1 )^{1/2} \| h \|_{L^{q^\prime}(\Omega)} \right\},
$$
 obtained in Step 1 for $q^\prime>2$.
By duality this gives
$$
(|\lambda| +1 )^{1/2} \| \nabla u \|_{L^q(\Omega)} + (|\lambda| +1 ) \| u \|_{L^q(\Omega)}
\le C\left\{  \| F \|_{L^q(\Omega)} + (|\lambda| +1 )^{1/2} \| f \|_{L^q(\Omega)} \right\}.
$$
As a result, we have proved  the existence and the estimate \eqref{B-est-0} for $F\in C_0^\infty (\Omega; \Cd)$ and $f\in C_0^\infty (\Omega; \Cdd)$.
The general case, where $F\in L^q(\Omega; \Cd)$,  $f\in L^q(\Omega; \Cdd)$  and $g=0$,   for $1< q< 2$,   follows readily by a density argument.

\medskip

Step 3. We establish the uniqueness.

The uniqueness for $q>2$ follows from the uniqueness for $q=2$.
To handle the case $1<q<2$, let  $u\in W^{1, q}_0 (\Omega;  \Cd)$ be 
a solution of \eqref{B-eq-0} in $\Omega$ with $F=0$,  $f=0$ and $g=0$.
Since $\overline{u} |u|^{q-2}\in L^{q^\prime} (\Omega; \Cd)$, by Step 1,  there exists $(v, \phi)\in W^{1, q^\prime}_0 (\Omega; \Cd)
\times L_0^{q^\prime}(\Omega; \C)$ such that
$$
\left\{
\aligned
-\Delta v +\nabla \phi +\lambda v & =  |u|^{q-2} \overline{u}  & \quad & \text{ in } \Omega,\\
\text{\rm div}(v) & =0 & \quad &\text{ in } \Omega.
\endaligned
\right.
$$
As in the case $\Omega=\Rd$, this leads to $\int_\Omega |u|^q=0$.
Hence, $u=0$ in $\Omega$.

\medskip

Step 4. The  case $g\neq 0$.

Let $g\in L^q_0(\Omega; \C)$.  Since $\Omega$ is a bounded Lipschitz domain, 
there exists $w\in W_0^{1, q}(\Omega; \Cd)$ such that 
\begin{equation}
\text{\rm div}(w) =g \quad \text{ in } \Omega \quad  \text{ and } \quad 
\| w\|_{L^q(\Omega)} + \|\nabla w\|_{L^q(\Omega)} \le C \| g \|_{L^q(\Omega)}.
\end{equation}
By considering $\wu=u-w$, we reduce the problem to the case $g=0$.
Indeed, let $\wu$ be a solution of 
$$
\left\{ 
\aligned
-\Delta \wu + \nabla p +\lambda \wu & = F +\text{\rm div} (f+\nabla w) -\lambda w,\\
\text{\rm div}(\wu) & =0
\endaligned
\right.
$$
in $\Omega$. Then $u=\wu+ w$ is a solution of \eqref{B-eq-0}.
\end{proof}

\begin{remark}\label{re-B-2}
Let $1< q<\infty$ and $\Omega$ be a bounded $C^1$ domain in $\R^d$.
By letting $\lambda\in \R_+$ and $\lambda\to 0$ in Theorem  \ref{thm-B-1}, one may show that  for any $F\in L^q(\Omega; \Cd)$,
$f\in L^q(\Omega; \Cdd)$ and $g \in L^q_0 (\Omega; \C)$, there exists a unique $(u, p) \in W_0^{1, q}(\Omega; \Cd) \times L^q_0(\Omega; \C)$ such that 
\begin{equation}\label{eq-B-0}
\left\{
\aligned
-\Delta u +\nabla p & = F +\text{\rm div}(f),\\
\text{\rm div}(u) & =g
\endaligned
\right.
\end{equation}
in $\Omega$.
Moreover, the solution $(u, p)$ satisfies the estimate
\begin{equation}\label{W-1-q}
\| \nabla u \|_{L^q(\Omega)} + \| p\|_{L^q(\Omega)} 
\le C \left\{ \| F\|_{L^q(\Omega)} + \| f\|_{L^q(\Omega)}
+ \| g \|_{L^q(\Omega)} \right\},
\end{equation}
where $C$ depends on $d$, $q$ and $\Omega$.
The $W^{1, q}$ estimate \eqref{W-1-q}  is known for $C^1$ domains \cite {Mitrea-2004}.  
If $\Omega$ is a bounded Lipschitz domain, the estimate \eqref{W-1-q} holds  for $(3/2)-\e <q<3+\e$ if $d= 3$, and for $(4/3) -\e <q< 4+\e$ if $d=2$,
where $\e$ depends on $\Omega$ \cite{Brown-S-1995}.
If $d\ge 4$, some partial results are known \cite{Kilty-2015}.
We point out that 
the results in \cite {Brown-S-1995, Mitrea-2004, Kilty-2015} rely on the estimates for a non-homogeneous Dirichlet problem, which is solved
 by using the methods of layer potentials. 
The approach used in this paper, which  
is based on a perturbation argument, seems to be more accessible.
However, it does not work for a general Lipschitz domain.
\end{remark}

We end this section with  a localized $W^{1, q}$ estimate that will be used in the next section.

\begin{thm}\label{thm-B-2}
Let $\Omega$ be a bounded $C^1$ domain and $2<q< \infty$. 
Let $B=B(x_0, r_0)$, where $x_0\in \partial \Omega$ and $r_0>0$ is small. 
Suppose that $u  \in W^{1, 2}(2B\cap \Omega; \Cd)$, $p\in L^2(2B\cap \Omega; \C)$, and
\begin{equation}\label{reg-C}
\left\{
\aligned
-\Delta u +\nabla p  & = F +\text{\rm div} (f) & \quad & \text{ in } 2B\cap \Omega,\\
\text{\rm div} (u) & =g & \quad & \text{ in } 2B\cap \Omega,\\
u& =0& \quad & \text{  on } 2B \cap \partial \Omega,
\endaligned
\right.
\end{equation}
where  $F \in L^q(2B\cap \Omega; \Cd)$, $f\in L^q(2B\cap \Omega; \Cdd)$ 
and $g\in L^q(2B\cap \Omega; \C)$. Then $u\in W^{1, q}(B\cap \Omega; \Cd)$, $p \in L^q(B\cap \Omega; \C)$,   and
\begin{equation}\label{reg-C-1}
\aligned
 & \| \nabla u \|_{L^q(B\cap \Omega)} + \| p-\fint_{B\cap \Omega} p \|_{L^q(B\cap \Omega)}\\
&\qquad\qquad
\le C \left\{ \| F \|_{L^q(2B\cap \Omega)}
+ \| f\|_{L^q(2B\cap \Omega)} 
+\| g \|_{L^q(2B\cap \Omega)} +  \| u \|_{L^2(2B\cap \Omega)} \right\},
\endaligned
\end{equation}
where $C$ depends on $d$, $q$, $r_0$ and $\Omega$.
\end{thm}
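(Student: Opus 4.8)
The plan is to derive the local estimate \eqref{reg-C-1} from the global $W^{1,q}$ estimate \eqref{W-1-q} of Remark \ref{re-B-2} by a cut-off and bootstrap argument that raises the integrability of $(\nabla u, p)$ from $2$ up to $q$ in finitely many steps. Write $\Omega_\rho = \Omega\cap B(x_0,\rho)$ and fix a finite increasing sequence of radii $r_0=\rho_N<\rho_{N-1}<\cdots<\rho_0=\tfrac32 r_0$ together with cut-offs $\varphi_j\in C_0^\infty(B(x_0,\rho_{j-1});\R)$ with $\varphi_j=1$ on $B(x_0,\rho_j)$ and $|\nabla\varphi_j|+|\nabla^2\varphi_j|\le C r_0^{-2}$. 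For each $j$, set $\bar p_j=\fint_{\Omega_{\rho_{j-1}}}p$. Then $(u\varphi_j,(p-\bar p_j)\varphi_j)$, extended by zero, lies in $W^{1,2}_0(\Omega;\Cd)\times L^2(\Omega;\C)$ (it vanishes on $\partial\Omega$ since $u=0$ on $2B\cap\partial\Omega$ and $\varphi_j$ is supported in $2B$) and, exactly as in \eqref{L-eq} with $\lambda=0$, solves the stationary Stokes system on the bounded $C^1$ domain $\Omega$ with data
\begin{equation*}
\begin{aligned}
\tilde F_j &= \varphi_j F - (\nabla\varphi_j) f + (p-\bar p_j)\nabla\varphi_j + u\,\Delta\varphi_j,\\
\tilde f_j &= \varphi_j f - 2\, u\otimes\nabla\varphi_j,\qquad
\tilde g_j = \varphi_j g + u\cdot\nabla\varphi_j.
\end{aligned}
\end{equation*}
Because $u\varphi_j\in W^{1,2}_0(\Omega)$ one has $\int_\Omega \tilde g_j=0$ automatically. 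The terms carrying the prescribed data ($\varphi_j F$, $(\nabla\varphi_j)f$, $\varphi_j f$, $\varphi_j g$) already lie in $L^q$, whereas the commutator terms are supported in the annulus $B(x_0,\rho_{j-1})\setminus B(x_0,\rho_j)$ and carry only the current integrability of $u$ and $p$.

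Define the exponents by $q_0=2$ and $q_j=\min\{q,q_{j-1}^*\}$, where $\tfrac1{q_{j-1}^*}=\tfrac1{q_{j-1}}-\tfrac1d$ (and $q_j=q$ once $q_{j-1}\ge d$); this reaches $q$ after a number $N$ of steps depending only on $d$ and $q$. The inductive claim is that $u\varphi_j\in W^{1,q_j}(\Omega)$ and $p\in L^{q_j}(\Omega_{\rho_j})$, with norms bounded by the right-hand side of \eqref{reg-C-1}. Assuming this at level $j-1$, the Sobolev embedding applied to $u\varphi_{j-1}\in W^{1,q_{j-1}}_0(\Omega)$ gives $u\in L^{q_{j-1}^*}(\Omega_{\rho_{j-1}})$, so the $u$-commutators $u\,\Delta\varphi_j$, $u\otimes\nabla\varphi_j$ and $u\cdot\nabla\varphi_j$ all lie in $L^{q_{j-1}^*}\subseteq L^{q_j}$. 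Together with the $L^q$ data this shows $\tilde f_j,\tilde g_j\in L^{q_j}(\Omega)$ and that every part of $\tilde F_j$ except the pressure commutator is in $L^{q_j}$. Writing the momentum right-hand side $\tilde F_j+\text{\rm div}(\tilde f_j)$ as a single functional in $W^{-1,q_j}(\Omega)$ (using the standard representation $\Lambda=F'+\text{\rm div}(f')$ with $\|F'\|_{L^{q_j}}+\|f'\|_{L^{q_j}}\approx\|\Lambda\|_{W^{-1,q_j}}$, as in Lemma \ref{G-lemma}) and invoking \eqref{W-1-q} at exponent $q_j$ then yields $u\varphi_j\in W^{1,q_j}(\Omega)$ and $(p-\bar p_j)\varphi_j\in L^{q_j}(\Omega)$. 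Since $\varphi_N=1$ on $B$, after $N$ steps this gives $u\in W^{1,q}(B\cap\Omega)$ and $p\in L^q(B\cap\Omega)$.

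The crux, and the step I expect to be the main obstacle, is the pressure commutator $(p-\bar p_j)\nabla\varphi_j$: a priori $p$ is known only in $L^{q_{j-1}}$, so this term is \emph{not} in $L^{q_j}$, and a naive iteration stalls at exponent $2$ because $\nabla u$ and $p$ control one another at the same exponent. The device that breaks this circularity is to keep the whole momentum datum in the negative norm and exploit the Sobolev embedding $L^{(q_j)_*}\hookrightarrow W^{-1,q_j}$, where $\tfrac1{(q_j)_*}=\tfrac1{q_j}+\tfrac1d$; indeed
\begin{equation*}
\| (p-\bar p_j)\nabla\varphi_j \|_{W^{-1,q_j}(\Omega)}
\le C\, \| (p-\bar p_j)\nabla\varphi_j \|_{L^{(q_j)_*}(\Omega)}
\le C r_0^{-1}\, \| p-\bar p_j \|_{L^{q_{j-1}}(\Omega_{\rho_{j-1}})},
\end{equation*}
since $q_j\le q_{j-1}^*$ forces $(q_j)_*\le q_{j-1}$, so $p-\bar p_j\in L^{q_{j-1}}\subseteq L^{(q_j)_*}$ (with equality $(q_j)_*=q_{j-1}$ when $q_j=q_{j-1}^*$). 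Thus the pressure commutator is absorbed by the previous-level pressure bound, and both $\nabla u$ and $p$ gain a full Sobolev step at each iteration. This is precisely where the all-exponent global theory \eqref{W-1-q} for $C^1$ domains is used essentially, and it is what permits reaching every finite $q$.

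Finally, I would remove $\|\nabla u\|_{L^2}$ and $\|p\|_{L^2}$ from the right-hand side in favour of $\|u\|_{L^2(2B\cap\Omega)}$, as demanded by \eqref{reg-C-1}. For this I would establish a local energy (Caccioppoli) inequality by testing \eqref{reg-C} against $u\,\eta^2$ with $\eta=1$ on $\tfrac32 B$ and $\eta$ supported in $2B$, controlling the pressure contribution through the Ne\v{c}as inequality of Lemma \ref{Lemma-B-2}; since $q\ge 2$ and $\Omega$ is bounded this gives
\begin{equation*}
\| \nabla u \|_{L^2(\frac32 B\cap\Omega)} + \big\| p-\fint_{\frac32 B\cap\Omega} p \big\|_{L^2(\frac32 B\cap\Omega)}
\le C\Big\{ \| u \|_{L^2(2B\cap\Omega)} + \| F \|_{L^q(2B\cap\Omega)} + \| f \|_{L^q(2B\cap\Omega)} + \| g \|_{L^q(2B\cap\Omega)} \Big\}.
\end{equation*}
Feeding this as the base case $j=0$ of the bootstrap (all cut-offs being supported in $\tfrac32 B$), and tracking the constants, which depend on $d$, $q$, $r_0$, the finitely many intermediate exponents, and $\Omega$, yields \eqref{reg-C-1}. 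A minor point to check along the way is the consistency of the solutions across exponents, namely that the $W^{1,q_j}$ solution produced by \eqref{W-1-q} coincides with $u\varphi_j$; this follows from uniqueness for the stationary global problem exactly as in the proof of Theorem \ref{thm-B-1}.
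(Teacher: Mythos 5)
Your proposal is correct, and its skeleton --- localize with cut-offs as in \eqref{L-eq} with $\lambda=0$, feed the commutator terms into the all-exponent global $W^{1,q}$ theory of Remark \ref{re-B-2}, and bootstrap through $q_0=2$, $q_j=\min\{q,q_{j-1}^*\}$ --- is the same as the paper's. You also isolated exactly the crux the paper flags, namely the pressure commutator $(p-\bar p_j)\nabla\varphi_j$; the only divergence is in how the one-Sobolev-step gain is implemented. The paper first upgrades \eqref{W-1-q} by an explicit duality argument to \eqref{iW}, in which the zeroth-order datum is measured only in $L^{s_*}$ with $s_*=\max\{2,s\}$ and $\frac1s=\frac1q+\frac1d$; you instead measure the whole momentum datum in $W^{-1,q_j}(\Omega)$, using the representation $\Lambda=F'+\text{\rm div}(f')$ with $\|F'\|_{L^{q_j}}+\|f'\|_{L^{q_j}}\approx\|\Lambda\|_{W^{-1,q_j}}$ together with the embedding $L^{(q_j)_*}\hookrightarrow W^{-1,q_j}$. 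These are dual formulations of the same fact ($L^{s_*}(\Omega)\hookrightarrow W^{-1,q}(\Omega)$ is precisely your embedding), so the two arguments are mathematically equivalent; if anything, your route is slightly more economical, since \eqref{W-1-q} already carries $\text{\rm div}(f)$ data and the representation lemma renders the paper's separate duality step unnecessary. Your exponent bookkeeping is sound, including the borderline case $d=2$: there $q_0=2=d$ forces $q_1=q$ at once, $(q_1)_*<2$, and the embedding $L^{(q_j)_*}\hookrightarrow W^{-1,q_j}$ is only invoked for $q_j>d/(d-1)$, which your convention ``$q_j=q$ once $q_{j-1}\ge d$'' guarantees.

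Two points you compress deserve a word, though neither is a genuine gap (the paper itself hides them inside ``a standard localization procedure \ldots we omit the details''). First, in the Caccioppoli base case, estimating $\int (p-c)\,\overline{u}\,\eta\,\nabla\eta$ via the Ne\v{c}as inequality of Lemma \ref{Lemma-B-2} (applied on the Lipschitz domain $2B\cap\Omega$, whose proof via the Bogovskii operator localizes without change) reintroduces $\|\nabla u\|_{L^2}$ on the \emph{support} of $\eta$, a larger region than where the left-hand side lives; after Young's inequality the absorption therefore requires the standard iteration lemma over nested radii (hole-filling), which you should cite or state. Second, the pressure produced by \eqref{W-1-q} at level $j$ is normalized to have mean zero over $\Omega$, so it agrees with $(p-\bar p_j)\varphi_j$ only up to an additive constant (by the $W^{1,2}_0\times L^2_0$ uniqueness you correctly invoke, legitimate since $q_j\ge 2$ and $\Omega$ is bounded); since every estimate in your induction and in \eqref{reg-C-1} concerns $p$ modulo constants, this costs only a harmless factor when passing from $\bar p_{j-1}$ to $\bar p_j$, but it should be said. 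With these two routine additions your proof is complete and matches the paper's level of rigor.
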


\begin{proof}

Theorem \ref{thm-B-2} follows from the estimate \eqref{W-1-q} by a localization argument.
However, some cares are needed to handle the error term $p (\nabla \varphi)$,  introduced by the pressure $p$,
where $\varphi$ is a cut-off function.

Consider the Stokes equations \eqref{eq-B-0} with $F=0$ and $g=0$;  i.e., 
$$
-\Delta u +\nabla p =\text{\rm div}(f) \quad \text{ and } \quad \text{\rm div}(u)=0
$$
in $\Omega$. It follows from \eqref{W-1-q} that  $\| \nabla u \|_{L^q(\Omega)} \le C \| f \|_{L^q(\Omega)}$.
By Sobolev imbedding, we obtain 
$$
\| u \|_{L^s(\Omega)} \le C \| f \|_{L^q(\Omega)}, 
$$
where $\frac{1}{s} =\frac{1}{q} -\frac{1}{d}$ and $1< q<d$.
By a duality argument, as in  Step 2  in the proof of Theorem \ref{thm-B-1},   this implies that the solution of
$$
- \Delta u +\nabla p =F \quad \text{ and } \quad \text{\rm div}(u)=0
$$
in $\Omega$ satisfies the estimate, 
$$
\| \nabla u \|_{L^q(\Omega)}  + \| p \|_{L^q(\Omega)} \le C \| F \|_{L^s (\Omega)}, 
$$
where $\frac{1}{s}=\frac{1}{q}+\frac{1}{d}$ and $1< s<d$.
This observation allows us  to  improve the estimate \eqref{W-1-q} to 
\begin{equation}\label{iW}
\|\nabla u \|_{L^q(\Omega)}
+ \| p \|_{L^q(\Omega)}
\le C \left\{ \| F\|_{L^{s_*} (\Omega)} + \| f \|_{L^q(\Omega)} + \| g \|_{L^q(\Omega)} \right\},
\end{equation}
where $s_* =\max \{ 2, s\}<q$ and $\frac{1}{s}=\frac{1}{q} +\frac{1}{d}$.
Using \eqref{iW}, a standard localization procedure, together with a bootstrapping argument,  yields \eqref{reg-C-1}.
We omit the details. 
\end{proof}


\section{An exterior $C^1$ domain and the proof of Theorem \ref{main-2}}\label{section-E}

In this section we consider the case of an exterior $C^1$ domain $\Omega$; i.e., $\Omega$ is a connected open subset of $\Rd$  with compact complement 
and $C^1$ boundary.
Let $F\in L^2(\Omega; \Cd)$, $ f\in L^2(\Omega; \Cdd)$ and $\lambda\in \Sigma_\theta$.
By  the Lax-Milgram Theorem, 
there  exists a unique  $u\in W^{1, 2}_0 (\Omega; \Cd)$ such that 
\begin{equation}\label{E-eq}
\left\{
\aligned
-\Delta u +\nabla p + \lambda u & =F+\text{\rm div}(f),\\
\text{\rm div} (u) & =0
\endaligned
\right.
\end{equation}
 holds in $\Omega$ for some $p\in L^2_{\loc}(\overline{\Omega}; \C)$ in the sense of distributions. Moreover, the solution satisfies 
\begin{equation}
|\lambda|^{1/2}  \| \nabla u \|_{L^2(\Omega)}
+|\lambda| \| u \|_{L^2(\Omega)}
\le C\left\{  \| F \|_{L^2(\Omega)} +  |\lambda|^{1/2} \| f \|_{L^2(\Omega)} \right\},
\end{equation}
where $C$ depends only on $d$ and $\theta$.
We will call  $u$ the energy solution of \eqref{E-eq}.
Note that, if $F\in L^q(\Omega; \Cd)\cap L^2(\Omega; \Cd)$
and $f\in L^2(\Omega; \Cdd) \cap L^q(\Omega; \Cdd)$  for some $q>2$,
 then $(u, p) \in W^{1, q}(\Omega\cap B; \Cd)\times  L^q(\Omega\cap B; \C)$ for any ball $B$ in $\Rd$.
This follows   from the regularity theory for the Stokes equations  \eqref{eq-B-0}  in bounded $C^1$ domains.
See Theorem \ref{thm-B-2}.

Let
\begin{equation}\label{S}
\Sigma_{\theta, \delta}
=\left\{ z\in \C: |z|> \delta \text{ and } |\text{arg} (z)|< \pi -\theta \right\},
\end{equation}
where $\theta \in (0, \pi/2)$ and $\delta \in (0, 1)$.
The goal of this section is to prove the following.

\begin{thm}\label{thm-E}
Let $\Omega$ be an exterior $C^1$ domain in $\Rd$, $d\ge 2$.
Let $1<q<\infty$ and $\lambda \in \Sigma_{\theta, \delta}$.
For any $F \in L^q(\Omega; \Cd)$ and $f\in L^q(\Omega; \Cdd)$,  there exists a unique $u\in W^{1, q}_0(\Omega; \Cd)$ such that 
\eqref{E-eq} holds in $\Omega$ for some $p\in L^1_{\loc} (\Omega; \C)$. 
Moreover, the solution satisfies the estimate, 
\begin{equation}\label{E-0-0}
|\lambda|^{1/2} \| \nabla u \|_{L^q(\Omega)}
+|\lambda| \| u \|_{L^q(\Omega)}
\le C \left\{
\| F \|_{L^q(\Omega)}
+ |\lambda|^{1/2} \| f \|_{L^q(\Omega)} \right\},
\end{equation}
where $C$ depends on $d$, $q$, $\theta$, $\delta$ and $\Omega$.
\end{thm}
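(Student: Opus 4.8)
The plan is to follow the Farwig--Sohr localization scheme, reducing Theorem~\ref{thm-E} to the whole-space estimate of Theorem~\ref{R-thm-1} far from the compact part of $\partial\Omega$ and to the bounded-domain estimate of Theorem~\ref{thm-B-1} near it. I would first establish \eqref{E-0-0} as an a priori estimate for $2\le q<\infty$, then obtain existence by approximating the energy solution, and finally reach $1<q<2$ by duality.

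Fix $R>0$ so large that $\Rd\setminus\Omega\subset B(0,R)$, choose $\eta\in C_0^\infty(B(0,2R);\R)$ with $\eta\equiv1$ on $B(0,R)$, and split $u=\eta u+(1-\eta)u$. Since $\text{\rm div}(u)=0$, the far part $(1-\eta)u$ vanishes near $\partial\Omega$, extends by zero to $W^{1,q}(\Rd;\Cd)$, and solves a whole-space Stokes system with right-hand side $(1-\eta)F+\text{\rm div}((1-\eta)f)$, commutator terms $-2\,\text{\rm div}(u\otimes\nabla\eta)+u\Delta\eta+p\nabla\eta-f\nabla\eta$, and divergence datum $g=-u\cdot\nabla\eta=-\text{\rm div}(\eta u)$; I apply Theorem~\ref{R-thm-1}, noting that $g$ being in divergence form with compact support controls $\|g\|_{\hW^{-1,q}(\Rd)}$ by $\|u\|_{L^q(\Omega\cap B(0,2R))}$. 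The near part $\eta u$ is supported in the bounded $C^1$ domain $\Omega\cap B(0,2R)$ and satisfies there an analogous Stokes system, to which I apply Theorem~\ref{thm-B-1}. Adding the two bounds and using $\eta+(1-\eta)=1$ gives
\begin{equation*}
|\lambda|^{1/2}\|\nabla u\|_{L^q(\Omega)}+|\lambda|\|u\|_{L^q(\Omega)}
\le C\big\{\|F\|_{L^q(\Omega)}+|\lambda|^{1/2}\|f\|_{L^q(\Omega)}+\mathcal{E}\big\},
\end{equation*}
where $\mathcal{E}$ gathers lower-order terms living on the bounded set $\Omega\cap B(0,2R)$, the pressure contribution being controlled through the local estimate of Theorem~\ref{thm-B-2} together with $\nabla p=\Delta u-\lambda u+F+\text{\rm div}(f)$.

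The heart of the argument is to remove $\mathcal{E}$. For $|\lambda|\ge\lambda_0$ with $\lambda_0$ large, the terms carrying a power of $|\lambda|$ below the left-hand weights are absorbed on the left, exactly as in Lemma~\ref{Lemma-B-3}; what survives are genuinely lower-order quantities on a bounded region, chiefly $|\lambda|\,\|u\|_{L^q(\Omega\cap B(0,2R))}$, which the gradient cannot absorb. To eliminate these I would run a compactness--contradiction argument: assuming the estimate fails, take solutions $(u_k,p_k)$ with $\lambda_k\in\Sigma_{\theta,\delta}$ and data satisfying $\|F_k\|_{L^q}+|\lambda_k|^{1/2}\|f_k\|_{L^q}\to0$ while the left-hand side equals $1$. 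The bound $|\lambda|\,\|u\|_{L^q(\Omega)}\le1$ controls the mass at infinity and the compact embedding $W^{1,q}(\Omega\cap B(0,2R))\hookrightarrow L^q(\Omega\cap B(0,2R))$ handles the bounded region, so, separating the regimes $\lambda_k\to\infty$ and $\lambda_k$ confined to a compact subset of $\Sigma_{\theta,\delta}$, a limit yields a nonzero solution of the homogeneous exterior problem, contradicting the uniqueness theorem proved in the Appendix. This delivers \eqref{E-0-0} for $2\le q<\infty$ with $C=C(d,q,\theta,\delta,\Omega)$; existence then follows by upgrading the energy solution via the local regularity of Theorem~\ref{thm-B-2} and passing to a weak limit under the uniform a priori bound, with general $L^q$ data reached by density.

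Finally, the range $1<q<2$ is treated exactly as in Step~2 of the proof of Theorem~\ref{thm-B-1}: pairing $u$ with the solution of the adjoint resolvent problem for $q'>2$ data, whose estimate is now available, gives \eqref{E-0-0} by duality, existence follows by density, and uniqueness for $1<q<2$ comes from the same duality with right-hand side $|u|^{q-2}\overline{u}$. \emph{The main obstacle} is the removal of the non-absorbable bounded-region term: an exterior domain has neither a global Poincar\'e inequality nor global compactness, so one must combine the decay furnished by $|\lambda|\,\|u\|_{L^q(\Omega)}$ with local Stokes compactness and the exterior-domain uniqueness result, and verify that the limiting solution is nonzero and lies in the correct space in both the large-$|\lambda|$ and bounded-$|\lambda|$ regimes.
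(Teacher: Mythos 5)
Your overall architecture (localization to the whole space and a bounded $C^1$ piece, an a priori estimate for $2\le q<\infty$, a compactness--contradiction step, then duality for $1<q<2$) is the same as the paper's, but there is a genuine gap in how you treat the divergence datum created by the cut-off, and it breaks your compactness argument precisely in the regime $|\lambda_k|\to\infty$. Applying Theorem \ref{thm-B-1} to $\eta u$ with $g=u\cdot\nabla\eta$ puts the term $(|\lambda|+1)\|u\cdot\nabla\eta\|_{L^q}$ on the right, and your bound $\|g\|_{\hW^{-1,q}(\Rd)}\le C\|u\|_{L^q(\Omega\cap B(0,2R))}$ for the far piece is similarly lossy: in both cases the error $\mathcal{E}$ contains $|\lambda|\,\|u\|_{L^q(\Omega\cap B(0,2R))}$, which carries the \emph{same} weight $|\lambda|$ as the left-hand side. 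In the contradiction argument with $|\lambda_k|\to\infty$, the normalization gives $\|u_k\|_{W^{1,q}}\le C$, so compact embedding yields $\|u_k\|_{L^q(\Omega\cap 2B_0)}\to 0$ --- but at no quantified rate, so $|\lambda_k|\,\|u_k\|_{L^q(\Omega\cap 2B_0)}$ need not vanish; and $\lambda_k u_k$ itself has no gradient bound ($\|\lambda_k\nabla u_k\|_{L^q}\lesssim|\lambda_k|^{1/2}\to\infty$), so there is no strong local $L^q$ compactness for $\lambda_k u_k$ either. This is why the paper proves the sharper negative-norm estimate $\|\text{\rm div}(u\varphi)\|_{\hW^{-1,q}}\le C\|u\|_{W_0^{-1,q}(\Omega\cap 2B_0)}$ (Lemma \ref{Lemma-B-1} and Remark \ref{re-B-1}, exploiting $\text{\rm div}(u)=0$ and $u=0$ on $\partial\Omega$), so that the only surviving lower-order term in Lemma \ref{Lemma-E-1} is $|\lambda|\,\|u\|_{W_0^{-1,q}(\Omega\cap 2B_0)}=\|\lambda u\|_{W_0^{-1,q}(\Omega\cap 2B_0)}$: boundedness of $\lambda_k u_k$ in $L^q$ plus the compact embedding $L^q\hookrightarrow W^{-1,q}$ on the bounded set then turns weak vanishing into strong vanishing.

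Even granting the corrected error term, your case $|\lambda_k|\to\infty$ is missing its key idea: there is no limiting homogeneous resolvent problem to which exterior uniqueness applies, so your statement that ``a limit yields a nonzero solution of the homogeneous exterior problem'' does not make sense in this regime (and in the bounded-$\lambda$ regimes the logic runs the other way: the weak limit is shown to be \emph{zero} by Lemma \ref{Lemma-E-2}, and the contradiction comes from the localized estimate, since the normalization does not pass to weak limits). For $|\lambda_k|\to\infty$ one must identify the weak $L^q$ limit $v$ of $\lambda_k u_k$: testing against solenoidal fields gives $v=\nabla\phi$, and testing against gradients gives $\Delta\phi=0$ in $\Omega$ with $\partial\phi/\partial n=0$ on $\partial\Omega$; the Liouville-type Lemma \ref{re-harmonic} in the Appendix (not a Stokes uniqueness statement) then forces $v=0$, after which $\|\lambda_k u_k\|_{W^{-1,q}(\Omega\cap 2B_0)}\to 0$ closes the argument. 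Your proposal neither isolates this Neumann-harmonic rigidity step nor supplies a substitute for it, and together with the $L^q$-versus-$W^{-1,q}$ issue above, the large-$|\lambda|$ case of the a priori estimate \eqref{E-3-0} --- the heart of the theorem --- remains unproved as written. The remaining ingredients (uniqueness for $q\ge 2$ via cut-off and whole-space regularity, existence by density from energy solutions, and duality for $1<q<2$) match the paper and are fine.
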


 Fix a large ball $B_0=B(0, 2R_0)$ such that  $\Omega\setminus B(0, R_0)
=\Rd \setminus B(0, R_0)$ and $B_0\cap \Omega$ is a bounded $C^1$ domain.

\begin{lemma}\label{Lemma-E-1}
Let $1< q< \infty$ and $\lambda \in \Sigma_{\theta} $.
Let $u\in W^{1, 2}_0(\Omega; \Cd)$ be an energy  solution of \eqref{E-eq}
with $F \in L^q(\Omega; \Cd)\cap L^2(\Omega; \Cd)$ and $f\in L^2(\Omega; \Cdd)\cap L^q(\Omega; \Cdd)$. Then $u\in W^{1, q}_0(\Omega; \Cd)$. Moreover, 
 if $|\lambda|\ge \lambda_0$,
\begin{equation}\label{E-1-0}
|\lambda|^{1/2} \| \nabla u \|_{L^q(\Omega)}
+ |\lambda| \| u \|_{L^q(\Omega)}
\le C \left\{ \| F \|_{L^q(\Omega)} + |\lambda|^{1/2}
\| f \|_{L^q(\Omega)} +  |\lambda| \| u \|_{W_0^{-1, q} (\Omega\cap 2B_0)} \right\},
\end{equation}
where $\lambda_0>1$ and  $C$ depend on $d$, $q$, $\theta $ and $\Omega$.
\end{lemma}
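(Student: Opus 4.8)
The plan is to treat this as the exterior analogue of Lemma \ref{Lemma-B-3}, establishing the membership $u\in W^{1,q}_0(\Omega;\Cd)$ and the a priori estimate \eqref{E-1-0} together by localization. I would use a partition of unity consisting of one piece $\eta$ supported near infinity, where $\Omega$ coincides with $\Rd$, and finitely many pieces $\{\varphi_k\}$ covering the bounded part $\overline{\Omega}\cap B_0$, each of interior type ($B(z_k,2r_0)\subset\Omega$) or boundary type ($z_k\in\partial\Omega$). For the membership, the energy solution already satisfies $u\in W^{1,q}(\Omega\cap B;\Cd)$ and $p\in L^q(\Omega\cap B;\C)$ for every ball $B$: for $q>2$ this is the regularity recalled before Theorem \ref{thm-E} (via Theorem \ref{thm-B-2}), while for $q\le 2$ it follows from $W^{1,2}\subset W^{1,q}$ on bounded sets. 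Since the piece $\eta u$ will be shown to lie in $W^{1,q}(\Rd;\Cd)$ by Theorem \ref{R-thm-1}, combining the two and using that $u$ has zero trace on the $C^1$ boundary $\partial\Omega$ gives $u\in W^{1,q}_0(\Omega;\Cd)$ for every $\lambda\in\Sigma_\theta$; the quantitative bound is then extracted for $|\lambda|\ge\lambda_0$.

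First I would take $\eta\in C^\infty(\Rd)$ with $\eta=0$ on $B(0,R_0)$ and $\eta=1$ outside $B_0=B(0,2R_0)$, so that $\eta u$ is supported in $\{|x|\ge R_0\}$, where $\Omega=\Rd$, and hence extends to $\Rd$. Multiplying \eqref{E-eq} by $\eta$ produces, exactly as in \eqref{L-eq}, the system with right-hand side $\eta F+\text{\rm div}(\eta f)-f(\nabla\eta)+p\nabla\eta-2\,\text{\rm div}(u\otimes\nabla\eta)+u\Delta\eta$ and constraint $\text{\rm div}(\eta u)=u\cdot\nabla\eta$, all error terms being supported in the bounded annulus $B_0\setminus B(0,R_0)$. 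Applying Theorem \ref{R-thm-1} on $\Rd$ controls $|\lambda|^{1/2}\|\nabla(\eta u)\|_{L^q(\Rd)}+|\lambda|\|\eta u\|_{L^q(\Rd)}$ by $\|F\|_{L^q(\Omega)}$, $|\lambda|^{1/2}\|f\|_{L^q(\Omega)}$, lower-order quantities $\|p\|_{L^q(\Omega\cap 2B_0)}$ and $(1+|\lambda|^{1/2})\|u\|_{L^q(\Omega\cap 2B_0)}$, together with $|\lambda|\|\text{\rm div}(\eta u)\|_{\hW^{-1,q}(\Rd)}$. For this last term, since $u=0$ on $\partial\Omega$ and $\text{\rm div}(u)=0$, one checks $\int_{\Rd}u\cdot\nabla\eta=0$, so the argument of Lemma \ref{Lemma-B-1} (subtracting the mean of a test function over the annulus and invoking a Poincar\'e inequality on that bounded set) yields $\|\text{\rm div}(\eta u)\|_{\hW^{-1,q}(\Rd)}\le C\|u\|_{W^{-1,q}_0(\Omega\cap 2B_0)}$. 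This is the source of the last term in \eqref{E-1-0}.

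For the bounded part I would proceed exactly as in Lemma \ref{Lemma-B-3}: over each interior ball apply Theorem \ref{R-thm-1}, and over each boundary ball apply Theorem \ref{thm-G} together with Remark \ref{re-G}, choosing $r_0$ small enough that the associated graph satisfies $\|\nabla^\prime\psi\|_\infty<c_0$. The only difference from the bounded case is that, because the cutoffs $\varphi_k$ have supports contained in $2B_0$, the same mean-zero Poincar\'e argument (Lemma \ref{Lemma-B-1} and Remark \ref{re-B-1}) makes every $W^{-1,q}$ contribution of $u$ a norm over $\Omega\cap 2B_0$ only. Summing the localized estimates then gives
\begin{equation*}
\aligned
|\lambda|^{1/2}\|\nabla u\|_{L^q(\Omega)}+|\lambda|\|u\|_{L^q(\Omega)}
\le C\Big\{ &\|F\|_{L^q(\Omega)}+|\lambda|^{1/2}\|f\|_{L^q(\Omega)}+\|p\|_{L^q(\Omega\cap 2B_0)}\\
&+(1+|\lambda|^{1/2})\|u\|_{L^q(\Omega)}+|\lambda|\|u\|_{W^{-1,q}_0(\Omega\cap 2B_0)}\Big\}.
\endaligned
\end{equation*}

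The main obstacle, and the point where the exterior case genuinely differs from Lemma \ref{Lemma-B-3}, is the pressure term $\|p\|_{L^q(\Omega\cap 2B_0)}$, since $p$ is only locally integrable and admits no global normalization. I would exploit that $p$ enters only through the cutoff gradients $\nabla\varphi_k$ and $\nabla\eta$, whose sum vanishes because $\sum_k\varphi_k+\eta\equiv 1$, so replacing $p$ by $p-\fint_{\Omega\cap 2B_0}p$ changes none of the error terms. As $\Omega\cap 2B_0$ is a bounded $C^1$ domain, Lemma \ref{Lemma-B-2} then gives $\|p-\fint_{\Omega\cap 2B_0}p\|_{L^q(\Omega\cap 2B_0)}\le C\|\nabla p\|_{W^{-1,q}(\Omega\cap 2B_0)}$, and substituting $\nabla p=\Delta u-\lambda u+F+\text{\rm div}(f)$ bounds this by $C\{\|\nabla u\|_{L^q(\Omega\cap 2B_0)}+|\lambda|\|u\|_{W^{-1,q}_0(\Omega\cap 2B_0)}+\|F\|_{L^q(\Omega)}+\|f\|_{L^q(\Omega)}\}$. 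Feeding this back, the remaining terms $\|\nabla u\|_{L^q(\Omega)}$ and $(1+|\lambda|^{1/2})\|u\|_{L^q(\Omega)}$ are of lower order relative to $|\lambda|^{1/2}\|\nabla u\|_{L^q(\Omega)}$ and $|\lambda|\|u\|_{L^q(\Omega)}$ on the left, so choosing $\lambda_0>1$ large enough that $|\lambda|^{1/2}\ge 2C$ and $|\lambda|\ge 2C(1+|\lambda|^{1/2})$ for $|\lambda|\ge\lambda_0$ lets me absorb them and conclude \eqref{E-1-0}.
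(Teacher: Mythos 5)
Your argument is correct and is essentially the paper's own proof: a cutoff $\eta$ at infinity handled by Theorem \ref{R-thm-1}, with the divergence term $\text{\rm div}(\eta u)=u\cdot\nabla\eta$ controlled in $\hW^{-1,q}(\Rd)$ by the argument of Lemma \ref{Lemma-B-1}; the bounded part treated exactly as in Lemma \ref{Lemma-B-3} via Theorem \ref{thm-G} and Remark \ref{re-G}; the pressure eliminated by Lemma \ref{Lemma-B-2} on the bounded $C^1$ domain $\Omega\cap 2B_0$ together with $\nabla p=\Delta u-\lambda u+F+\text{\rm div}(f)$; and absorption of the lower-order terms for $|\lambda|\ge\lambda_0$. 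One small repair is needed in your pressure normalization: the cancellation $\sum_k\nabla\varphi_k+\nabla\eta\equiv 0$ does not show that the error terms are unchanged under $p\mapsto p-\fint_{\Omega\cap 2B_0}p$, since each individual term $\|p\nabla\varphi_k\|_{L^q}$ does change under this substitution. The correct and simpler justification, which is what the paper does, is that the pressure in \eqref{E-eq} is determined only up to an additive constant, so one may assume $\int_{\Omega\cap 2B_0}p=0$ from the outset; then $(u,p)$ with the normalized pressure satisfies every localized system and Lemma \ref{Lemma-B-2} applies directly. Also, when you identify $\eta u$ (a priori only in $W^{1,2}$) with the $W^{1,q}$ solution produced by Theorem \ref{R-thm-1}, so as to conclude $u\in W^{1,q}_0(\Omega;\Cd)$, you are implicitly using that the whole-space solution does not depend on the exponent, i.e., Remark \ref{re-R-u}, which the paper cites explicitly at this step; your claim $\int_{\Rd}u\cdot\nabla\eta=0$ is fine (write $\nabla\eta=-\nabla(1-\eta)$ with $1-\eta$ compactly supported and use $\text{\rm div}(u)=0$ with $u=0$ on $\partial\Omega$).
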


\begin{proof}
The proof, which uses a localization argument,  is similar to that of Lemma \ref{Lemma-B-3} for the bounded domain.
However, we need to add another case to handle the neighborhood of  $\infty$.
Choose  $\varphi \in C^\infty (\Rd; \R)$ such that $\varphi =1$ in $\Rd\setminus B(0, 2R_0)$ and
$\varphi =0$ in $B(0, R_0)$.
Then the Stokes equations in \eqref{L-eq} hold in $\Rd$. Since $p\in L^q(\Omega\cap 2B_0)$, where $B_0=B(0, 2R_0)$,
it  follows by Theorem \ref{R-thm-1}  and Remark \ref{re-R-u} that  $u\varphi \in W^{1, q}_0(\Rd; \Cd)$ and 
$$
\aligned
& | \lambda|^{1/2} \| \nabla (u \varphi) \|_{L^q(\Rd)}
+ |\lambda| \| u\varphi  \|_{L^q(\Rd)}\\
& \le C \Big\{ \| F \varphi \|_{L^q(\Rd)} + 
|\lambda|^{1/2} \| f \varphi \|_{L^q(\Rd)}
+ \| f\nabla \varphi \|_{L^q(\Rd)} 
+ \| p \nabla \varphi \|_{L^q(\Rd)}\\
&\qquad\qquad\qquad
+ \|u\Delta \varphi \|_{L^q(\Rd)}
+ |\lambda|^{1/2} \| u \nabla \varphi \|_{L^q(\Rd)}
+ |\lambda| \| \text{div} (u \varphi) \|_{\hW^{-1, q} (\Rd )} \Big\}.
\endaligned
$$
Note that the same argument as in the proof of Lemma \ref{Lemma-B-1} also yields
$$
\|\text{\rm div} (u \varphi)\|_{\hW^{-1, q}(\Rd)} \le C \| u \|_{W_0^{-1, q} (\Omega\cap B_0)}.
$$
Hence, 
\begin{equation}\label{E-1-1}
\aligned
 & |\lambda|^{1/2} \| \nabla u \|_{L^q(\Omega \setminus B_0)}
+ |\lambda| \| u \|_{L^q(\Omega \setminus B_0)}\\
& \le C \Big\{
\| F \|_{L^q(\Omega)} + |\lambda|^{1/2} \| f \|_{L^q(\Omega)} + \| f \|_{L^q(\Omega\cap B_0)} 
+ \| p \|_{L^q(\Omega\cap B_0)}\\
& \qquad \qquad
+(1+ |\lambda|^{1/2} ) \| u \|_{L^q(\Omega\cap B_0)}
+ |\lambda| \| u \|_{W_0^{-1, q}(\Omega \cap B_0)}\Big\} . \\
\endaligned
\end{equation}
Since $\Omega\cap B_0$ is a bounded $C^1$ domain, 
it follows from the proof of Lemma \ref{Lemma-B-3} that
$$
\aligned
 & |\lambda|^{1/2} \| \nabla u \|_{L^q(\Omega \cap B_0  )}
+ |\lambda| \| u \|_{L^q(\Omega \cap  B_0)}\\
& \le C \Big\{
\| F \|_{L^q(\Omega)} +|\lambda|^{1/2} \| f \|_{L^q(\Omega)} + \| f \|_{L^q(\Omega\cap 2B_0)} 
+ \| p \|_{L^q(\Omega\cap 2 B_0)}\\
&\qquad\qquad
+(1+ |\lambda|^{1/2} ) \| u \|_{L^q(\Omega\cap 2B_0)}
+ |\lambda| \| u \|_{W_0^{-1, q}(\Omega \cap 2B_0)}\Big\}. \\
\endaligned
$$
This, together with \eqref{E-1-1}, gives
\begin{equation}\label{E-1-2}
\aligned
 & |\lambda|^{1/2} \| \nabla u \|_{L^q(\Omega )}
+ |\lambda| \| u \|_{L^q(\Omega )}\\
& \le C \Big\{
\| F \|_{L^q(\Omega)} + ( |\lambda|^{1/2} +1) \| f\|_{L^q(\Omega)} 
+ \| p \|_{L^q(\Omega\cap2 B_0)}\\
&\qquad\qquad
+( |\lambda|^{1/2} +1 ) \| u \|_{L^q(\Omega\cap 2B_0)}
+ |\lambda| \| u \|_{W_0^{-1, q}(\Omega \cap 2B_0)}\Big\} \\
&  \le C \Big\{ 
\| F \|_{L^q(\Omega)}+ ( |\lambda|^{1/2} +1) \| f\|_{L^q(\Omega)} 
+ \|\nabla u \|_{L^q(\Omega\cap 2B_0)}\\
&\qquad\qquad
+ ( |\lambda|^{1/2} +1 ) \| u \|_{L^q(\Omega\cap 2B_0)}
+ |\lambda| \| u \|_{W_0^{-1, q}(\Omega \cap 2B_0)}\Big\}, 
\endaligned
\end{equation}
where we have  assumed $\int_{\Omega\cap 2B_0} p=0$ and used Lemma \ref{Lemma-B-2} for the last inequality.
As a result, we have proved that $u\in W^{1, q}_0(\Omega; \Cd)$. Moreover, 
we obtain  \eqref{E-1-0} if $|\lambda|\ge  \lambda_0$ and $\lambda_0>1$ is sufficiently large.
\end{proof}

\begin{remark}\label{re-E-1-0}
Suppose that $\lambda \in \Sigma_{\theta}  $ and $ |\lambda|\le \lambda_0$.
Let $2<q< \infty$.
It follows from \eqref{E-1-2}  and Theorem \ref{thm-B-2}  as well as the interior estimates for the Stokes
equations with $\lambda=0$ that 
\begin{equation}\label{E-1-3}
\aligned
|\lambda|^{1/2} \| \nabla u \|_{L^q(\Omega)} + |\lambda| \| u \|_{L^q(\Omega)}
\le C \left\{
\| F \|_{L^q(\Omega)} + \| f \|_{L^q(\Omega)}
+ \| u \|_{L^q(\Omega \cap 3B_0)} \right\},
\endaligned
\end{equation}
where $C$ depends on $d$, $q$, $\theta$ and $\Omega$.
\end{remark}

The next lemma gives the uniqueness for $q>2$.

\begin{lemma}\label{Lemma-E-2}
Let $2\le q<\infty$ and $\lambda\in \Sigma_\theta$.
Let $u \in W^{1, q}_0 (\Omega; \Cd)$ be a solution of \eqref{E-eq} in $\Omega$ with $F=0$ and $f=0$.
Then $u=0$  in $\Omega$.
\end{lemma}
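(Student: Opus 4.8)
The plan is to reduce the problem to the energy case $q=2$ by showing that the given solution is in fact an energy solution, i.e. $u\in W^{1,2}_0(\Omega;\Cd)$, and then to run the standard energy identity. The only genuine difficulty is that on an exterior domain $W^{1,q}_0$ with $q>2$ does \emph{not} embed into $L^2$, so the global $L^2$-integrability of $u$ and $\nabla u$ near infinity must be established by hand; everything else is routine.

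First I would record the local regularity. Since $\partial\Omega\subset B(0,R_0)$ and $\Omega\setminus B(0,R_0)=\Rd\setminus B(0,R_0)$, on the annulus $\{R_0\le|x|\le 2R_0\}$ the pair $(u,p)$ solves the interior Stokes system $-\Delta u+\nabla p=-\lambda u$, $\text{\rm div}(u)=0$ with $-\lambda u\in L^q$, so interior regularity gives $u\in W^{1,q}$ and $p\in L^q$ (after normalizing the additive constant) there. On the bounded part $\Omega\cap 2B_0$ one simply has $u\in W^{1,q}\subset W^{1,2}$, because $q\ge 2$ and $\Omega\cap 2B_0$ is bounded.

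Next I would cut off near infinity. Choosing $\varphi\in C^\infty(\Rd;\R)$ with $\varphi=1$ on $\Rd\setminus B(0,2R_0)$ and $\varphi=0$ on $B(0,R_0)$, the field $v=u\varphi\in W^{1,q}(\Rd;\Cd)$ satisfies, by the computation \eqref{L-eq} with $F=f=0$,
$$
-\Delta v+\nabla(p\varphi)+\lambda v = p\nabla\varphi+u\Delta\varphi-2\,\text{\rm div}(u\otimes\nabla\varphi),\qquad \text{\rm div}(v)=u\cdot\nabla\varphi,
$$
in $\Rd$. All the data on the right are supported in the annulus and, by the previous step, lie in $L^2\cap L^q$; moreover $\text{\rm div}(v)=\text{\rm div}(u\varphi)$ with $u\varphi\in L^2\cap L^q$ compactly supported belongs to $\hW^{-1,2}\cap\hW^{-1,q}$. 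Hence Theorem \ref{R-thm-1} produces a $W^{1,2}(\Rd)$ solution for this data, which by the exponent-independence in Remark \ref{re-R-u} is simultaneously the $W^{1,q}(\Rd)$ solution; uniqueness in $W^{1,q}(\Rd)$ then forces it to coincide with $v$. Consequently $v\in W^{1,2}(\Rd)$, so $u=v\in W^{1,2}$ on $\Omega\setminus 2B_0$. Combining with the bounded part and the vanishing trace inherited from $u\in W^{1,q}_0(\Omega)$, I obtain $u\in W^{1,2}_0(\Omega;\Cd)$.

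Finally, with $u\in W^{1,2}_0(\Omega;\Cd)$ and $\text{\rm div}(u)=0$, testing \eqref{E-eq} against $\overline{u}$ (legitimate by density of divergence-free fields in $W^{1,2}_0$, the pressure dropping out) gives
$$
\int_\Omega |\nabla u|^2 + \lambda\int_\Omega |u|^2 = 0.
$$
Since $\int_\Omega|\nabla u|^2$ and $\int_\Omega|u|^2$ are nonnegative reals and $\lambda\in\Sigma_\theta$ is not a nonpositive real number, this identity forces $\int_\Omega|u|^2=0$, whence $u=0$ in $\Omega$. As indicated, the main obstacle is the integrability upgrade at infinity, which is resolved precisely by the cut-off together with the exponent-independence of the whole-space solutions in Remark \ref{re-R-u}.
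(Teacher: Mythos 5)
Your proposal is correct and follows essentially the same route as the paper: cut off near infinity, observe that the resulting whole-space data are compactly supported and hence in $L^2\cap L^q$, invoke the exponent-independence of Remark \ref{re-R-u} together with uniqueness in $W^{1,q}(\Rd)$ to conclude $u\varphi\in W^{1,2}(\Rd)$ and thus $u\in W^{1,2}_0(\Omega;\Cd)$, and finish with the standard $q=2$ energy uniqueness (which the paper simply cites as well known). One small slip in wording: it is $\text{\rm div}(u\varphi)=u\cdot\nabla\varphi$, not $u\varphi$ itself, that is compactly supported, and its membership in $\hW^{-1,2}(\Rd)\cap\hW^{-1,q}(\Rd)$ rests not on compact support alone but on the mean-zero structure coming from $\text{\rm div}(u)=0$ and $u=0$ on $\partial\Omega$, exactly as in Lemma \ref{Lemma-B-1} and Remark \ref{re-B-1}.
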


\begin{proof}
The case $q=2$ is well known.
To handle the case $q>2$, we choose $\varphi \in C^\infty(\Rd; \R)$ such that $\varphi=1$ in $\Omega\setminus B(0, 2R_0)$
and $\varphi=0$ in $B(0, R_0)$, as in the proof of Lemma \ref{Lemma-E-1}.
Then the Stokes equations in \eqref{L-eq} hold in $\Rd$ with $F=0$ and $f=0$.
Since the right-hand sides of \eqref{L-eq} have compact support and thus are in $L^2(\Rd; \Cd)$, it follows from Remark \ref{re-R-u} that 
$u \varphi \in W^{1, 2} (\Rd; \Cd)$.
As a result, $u \in W^{1, 2}_0(\Omega; \Cd) $.
By the uniqueness for $q=2$, we conclude that  $u=0$ in $\Omega$.
\end{proof}

\begin{lemma}\label{Lemma-E-3}
Let $2\le  q< \infty$ and $\lambda\in \Sigma_{\theta, \delta} $.
Let $u \in W^{1, 2}_0(\Omega; \Cd) $ be an energy  solution of \eqref{E-eq}
with $F \in L^q(\Omega; \Cd)\cap L^2(\Omega; \Cd)$ and $f\in L^q(\Omega; \Cdd)\cap L^2(\Omega; \Cdd)$. Then
\begin{equation}\label{E-3-0}
|\lambda|^{1/2} \| \nabla u \|_{L^q(\Omega)}
+ |\lambda| \| u \|_{L^q(\Omega)}
\le C \left\{  \| F \|_{L^q(\Omega)} + |\lambda|^{1/2} \| f \|_{L^q(\Omega)} \right\},
\end{equation}
where $C$ depends on $d$, $q$, $\theta$, $\delta$ and $\Omega$.
\end{lemma}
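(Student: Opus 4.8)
The plan is to argue by contradiction, exploiting the two a priori estimates with lower-order error terms already established—Lemma~\ref{Lemma-E-1} for large $|\lambda|$ and Remark~\ref{re-E-1-0} for bounded $|\lambda|$—together with the uniqueness result of Lemma~\ref{Lemma-E-2}. The case $q=2$ is the energy estimate, so assume $q>2$. Suppose \eqref{E-3-0} fails; then there are $\lambda_k\in\Sigma_{\theta,\delta}$, data $F_k\in L^q(\Omega;\Cd)\cap L^2(\Omega;\Cd)$ and $f_k\in L^q(\Omega;\Cdd)\cap L^2(\Omega;\Cdd)$, and energy solutions $u_k$ (which lie in $W^{1,q}_0(\Omega;\Cd)$ by Lemma~\ref{Lemma-E-1}) with
\[
|\lambda_k|^{1/2}\|\nabla u_k\|_{L^q(\Omega)}+|\lambda_k|\,\|u_k\|_{L^q(\Omega)}=1,
\qquad
\|F_k\|_{L^q(\Omega)}+|\lambda_k|^{1/2}\|f_k\|_{L^q(\Omega)}\to 0 .
\]
Passing to a subsequence, either $|\lambda_k|$ remains in a compact subset of $[\delta,\infty)$ or $|\lambda_k|\to\infty$, and I treat these regimes separately.

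\textbf{Bounded $|\lambda_k|$.} Here $\delta\le|\lambda_k|\le\lambda_0$, so the normalization bounds $\{u_k\}$ in $W^{1,q}_0(\Omega;\Cd)$. I would extract a weak limit $u_k\rightharpoonup u_*$ there (so $u_*$ has zero trace), with $u_k\to u_*$ strongly in $L^q$ on every bounded subset of $\overline{\Omega}$ by Rellich's theorem, and $\lambda_k\to\lambda_*$ with $\lambda_*\neq0$ and $|\arg\lambda_*|\le\pi-\theta$, hence $\lambda_*\in\Sigma_{\theta'}$ for some $\theta'<\theta$. Since the data tend to $0$, the limit $u_*\in W^{1,q}_0(\Omega;\Cd)$ solves the homogeneous problem \eqref{E-eq} with $F=f=0$, so $u_*=0$ by Lemma~\ref{Lemma-E-2}. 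Now the a priori estimate of Remark~\ref{re-E-1-0}, whose only error term is $\|u_k\|_{L^q(\Omega\cap 3B_0)}$, gives $1\le C\{\,o(1)+\|u_k\|_{L^q(\Omega\cap 3B_0)}\}$, and the right-hand side tends to $0$ by the strong local convergence together with $u_*=0$; this contradicts the normalization.

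\textbf{Large $|\lambda_k|$.} This is the crux. Lemma~\ref{Lemma-E-1} yields $1\le C\{\,o(1)+|\lambda_k|\,\|u_k\|_{W_0^{-1,q}(\Omega\cap 2B_0)}\}$, so $|\lambda_k|\,\|u_k\|_{W_0^{-1,q}(\Omega\cap 2B_0)}\ge c>0$. A naive compactness argument fails here: the renormalized fields $|\lambda_k|u_k$ are bounded only in $L^q(\Omega\cap 2B_0)$, and testing the equation against divergence-free fields shows their weak limit is a \emph{nonzero} divergence-free gradient—a spurious pressure mode—because the Dirichlet condition is lost when one renormalizes by the scalar $|\lambda_k|$ instead of rescaling space. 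I would instead perform a blow-up at the natural resolvent scale: choose $z_k\in\overline{\Omega\cap 2B_0}$ witnessing the concentration, dilate by $|\lambda_k|^{-1/2}$ about $z_k$, and write $\mu_k=\lambda_k/|\lambda_k|$, $|\mu_k|=1$. The dilated $C^1$ domains converge locally to $\Rdp$ (when $z_k$ approaches $\partial\Omega$) or to $\Rd$, the rescaled data vanish, and—using the local regularity of Theorem~\ref{thm-B-2} to upgrade the $L^q$ bounds to $W^{1,q}$ bounds so that the zero boundary values \emph{persist}—the rescaled solutions converge to a solution of the homogeneous model problem with parameter $\mu_*\in\Sigma_\theta$. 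By the clean, error-free estimates of Theorems~\ref{R-thm-1} and~\ref{H-thm-1}, this limit must vanish, which contradicts the retained concentration.

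I expect the large-$|\lambda|$ regime to be the genuine obstacle. The difficulty is exactly the removal of the factor $|\lambda|\,\|u\|_{W^{-1,q}(\Omega\cap 2B_0)}$: on a fixed bounded region no amount of soft compactness suffices, since the limiting object is a legitimate (nonzero) harmonic pressure gradient, and the only way to recover the boundary condition in the limit is to rescale space and pass to the model half-space or whole-space problem. Executing this cleanly requires tracking the location of the concentration, the $C^1$-convergence of the dilated boundaries, and the transfer of uniform interior and boundary $W^{1,q}$ bounds to the blow-up sequence.
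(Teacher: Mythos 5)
Your bounded-parameter regime is essentially the paper's argument (its Cases (1) and (2)), up to one small bookkeeping gap: your dichotomy allows $|\lambda_k|$ to stay in a compact subset of $[\delta,\infty)$ with $|\lambda_k|>\lambda_0$, yet you invoke Remark \ref{re-E-1-0}, which requires $|\lambda|\le\lambda_0$. That subrange must instead be handled as in the paper's Case (1): by Lemma \ref{Lemma-E-2} the weak limit vanishes, weak convergence in $W^{1,q}_0$ upgrades to strong convergence in $W^{-1,q}_0(\Omega\cap 2B_0;\Cd)$ by compactness, and then \eqref{E-1-0} yields the contradiction. This is easily repaired.

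The genuine flaw is your treatment of the regime $|\lambda_k|\to\infty$. Your central claim --- that soft compactness fails because the weak limit of $\lambda_k u_k$ is a \emph{nonzero} ``spurious pressure mode,'' the Dirichlet condition having been lost --- is exactly backwards, and the paper's proof turns on refuting it. Testing against divergence-free fields does show the weak $L^q$ limit $v$ of $\lambda^\ell u^\ell$ is a gradient, $v=\nabla\phi$ with $\phi\in\hW^{1,q}(\Omega;\C)$. But only the \emph{tangential} part of the boundary condition is lost in the limit: since $\lambda^\ell u^\ell\in W^{1,q}_0(\Omega;\Cd)$ with $\text{\rm div}(\lambda^\ell u^\ell)=0$, one has $\int_\Omega \lambda^\ell u^\ell\cdot\nabla\varphi=0$ for every $\varphi\in C_0^\infty(\Rd;\C)$, and this identity passes to the limit, so $\phi$ is harmonic in $\Omega$ with $\frac{\partial\phi}{\partial n}=0$ on $\partial\Omega$ and $\nabla\phi\in L^q(\Omega;\Cd)$. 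Lemma \ref{re-harmonic} in the Appendix (a Liouville-type result for the exterior Neumann problem, proved precisely for this purpose) then forces $\nabla\phi=0$, i.e.\ $v=0$. Consequently $\lambda^\ell u^\ell\rightharpoonup 0$ weakly in $L^q$, hence strongly in $W^{-1,q}(\Omega\cap 2B_0;\Cd)$ by the compact embedding $L^q\hookrightarrow W^{-1,q}$ on the bounded set, and \eqref{E-1-0} with the normalization \eqref{E-3-2}--\eqref{E-3-3} gives $1\le o(1)$, the desired contradiction. Your substitute blow-up scheme, besides being unnecessary, is not executed and faces a real obstruction: the concentration you propose to retain is a lower bound on the negative-norm quantity $|\lambda_k|\|u_k\|_{W_0^{-1,q}(\Omega\cap 2B_0)}\ge c>0$, which neither localizes at points $z_k$ in any evident way nor persists under the weak limits available after dilation by $|\lambda_k|^{-1/2}$ (the fixed set $\Omega\cap 2B_0$ expands to the entire limit domain, and vanishing/splitting of the profile is not excluded). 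You never show the blow-up limit is nontrivial, so the clean estimates of Theorems \ref{R-thm-1} and \ref{H-thm-1} produce no contradiction; as written, the large-$|\lambda|$ case of your proof is incomplete, and its motivating diagnosis is incorrect.
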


\begin{proof}
The case $q=2$ is the well known energy estimate.
To handle the case $q> 2$, we argue by contradiction.
Note that by Lemma \ref{Lemma-E-1}, $u\in W^{1, q}_0(\Omega; \Cd)$.
Suppose the estimate  \eqref{E-3-0} is not true.
Then there exist sequences $\{u^\ell \}\subset  W^{1, q}_0 (\Omega; \Cd)$,
 $\{ F^\ell \}\subset L^q(\Omega; \Cd)\cap L^2(\Omega; \Cd)$,
 $\{ f^\ell \}\subset L^q(\Omega; \Cdd)\cap L^2(\Omega; \Cdd)$  and $\{\lambda^\ell\}
\subset \Sigma_{\theta, \delta}  $ such that 
\begin{equation}\label{E-3-1}
\left\{
\aligned
-\Delta u^\ell +\nabla p^\ell + \lambda^\ell  u^\ell & = F^\ell + \text{\rm div}(f^\ell)  & \quad & \text{ in } \Omega,\\
\text{\rm div} (u^\ell) & = 0& \quad & \text{ in } \Omega,
\endaligned
\right.
\end{equation}
for some $p^\ell \in L^2_{\loc} (\overline{\Omega}; \C)$, 
\begin{equation}\label{E-3-2}
|\lambda^\ell|^{1/2} \|\nabla u^\ell \|_{L^q(\Omega)} + |\lambda^\ell | \| u^\ell \|_{L^q(\Omega)}
=1,
\end{equation}
and
\begin{equation}\label{E-3-3}
\| F^\ell \|_{L^q(\Omega)}  +|\lambda^\ell |^{1/2} \| f^\ell \|_{L^q(\Omega)} \to 0 \quad \text{ as } \ell \to \infty.
\end{equation}
Since  $| \lambda^\ell|\ge \delta$, it follows from \eqref{E-3-2} that $\| u^\ell \|_{W^{1, q}_0(\Omega)} \le C$. 
By passing to a subsequence, we may assume that $u^\ell \to u$ weakly in $W^{1, q}_0(\Omega; \Cd)$.
 We may also assume that either $|\lambda^\ell |\to \infty$ or
$\lambda ^\ell \to \lambda\in \C$.

We consider three  cases: (1) $ \lambda^\ell  \to \lambda\in \C$ and $|\lambda|> 2 \lambda_0$, where $\lambda_0>1$ is given by
Lemma \ref{Lemma-E-1};  (2) $\lambda^\ell  \to \lambda$ and $|\lambda|\le 2 \lambda_0$;    and (3) $|\lambda^\ell | \to \infty$.

Case (1). Suppose $\lambda^\ell \to \lambda \in \C$ and $|\lambda|> 2\lambda_0$.
It follows that $u \in W^{1, q}_0 (\Omega; \Cd)$ is a solution of \eqref{E-eq} in $\Omega$ with
$F=0$ and $f=0$. By Lemma \ref{Lemma-E-2}, we obtain $u=0$ in $\Omega$.
Thus, $u^\ell \to 0$ weakly in $W^{1, q}_0 (\Omega; \Cd)$.
This implies that  $u^\ell \to 0$ strongly in $W_0^{-1, q} (\Omega \cap 2B_0; \Cd)$.
However, by \eqref{E-1-0}  and  \eqref{E-3-2}-\eqref{E-3-3}, we have
\begin{equation}\label{E-3-4}
\aligned
1= & |\lambda^\ell|^{1/2} \|\nabla u^\ell \|_{L^q(\Omega)}   + |\lambda^\ell | \| u^\ell \|_{L^q(\Omega)}\\
 & \le C \left\{ \| F^\ell \|_{L^q(\Omega)} +|\lambda^\ell|^{1/2} \| f^\ell \|_{L^q(\Omega)} 
+ |\lambda^\ell | \| u^\ell \|_{W_0^{-1, q}(\Omega\cap 2B_0)}\right\}
\to 0,
\endaligned
\end{equation}
which yields  a  contradiction.

Case (2). Suppose $\lambda^\ell \to \lambda$ and $|\lambda|\le 2 \lambda_0$.
As in case (1),  $u^\ell \to 0$ weakly in $W^{1, q}_0(\Omega; \Cd)$.
  It   follows from \eqref{E-1-3}  that
$$
\aligned
 & 1=|\lambda^\ell |^{1/2}  \|\nabla u^\ell \|_{L^q(\Omega)}
+ |\lambda^\ell | \| u^\ell \|_{L^q(\Omega)}\\
 & \qquad
 \le C \left\{ \| F^\ell \|_{L^q(\Omega)} + \| f^\ell \|_{L^q(\Omega)} 
+ \| u^\ell \|_{L^q(\Omega\cap 3B_0) } \right\}.
\endaligned
$$
This gives us a contradiction,  as $u^\ell \to 0$ strongly in $L^q(\Omega\cap 3B_0; \Cd)$.

Case (3). Suppose that $|\lambda^\ell | \to \infty$. In view of \eqref{E-3-2}, we have $u^\ell \to 0$ strongly in $L^q(\Omega; \Cd)$.
By passing to a subsequence, we assume that $\lambda^\ell u^\ell \to v$ weakly in $L^q(\Omega; \Cd)$.
Note that if $w\in C_0^\infty (\Omega; \Cd)$ and $ \text{\rm div}(w)=0$ in $\Omega$, then
$$
-\int_\Omega u^\ell \cdot \Delta w + \int_\Omega \lambda^\ell u^\ell \cdot w 
=\int_\Omega F^\ell \cdot w -\int_\Omega f^\ell \cdot \nabla w.
$$
By letting $\ell \to \infty$, we obtain 
$
\int_\Omega v\cdot w=0.
$ 
This implies that $v=\nabla \phi$ for some $\phi\in \hW^{1, q}  (\Omega; \C)$.
Since $\lambda^\ell u^\ell\in W^{1, q}_0(\Omega; \Cd)$ and $\text{\rm div} (\lambda^\ell u^\ell)=0$ in $\Omega$, we also have 
$\int_\Omega v \cdot \nabla \varphi=0$ for any $\varphi\in C_0^\infty(\Rd; \C)$.
It follows that $\phi \in \hW^{1, q}(\Omega;\C)$ is a solution of the Neumann problem:
$
\Delta \phi   =0 
$  in  $\Omega$ and $
\frac{\partial \phi}{\partial n}  =0 
$
  on $ \partial \Omega$.
Since $\nabla \phi \in L^q (\Omega; \Cd)$, we conclude  that $v=\nabla \phi =0$ in $\Omega$. See Lemma \ref{re-harmonic} in Appendix.
Thus, $\lambda^\ell u^\ell \to 0$ weakly in $L^q(\Omega; \Cd)$ and thus strongly in $W^{-1, q}(\Omega\cap 2B_0; \Cd)$.
Consequently, \eqref{E-3-4} holds and gives us a contradiction.
This completes the proof.
\end{proof}

\begin{proof}[\bf Proof of Theorem \ref{thm-E}]

Step 1. Assume  $2\le  q< \infty$. The uniqueness is given by Lemma \ref{Lemma-E-2}. 
Since $L^2(\Omega; \Cd)\cap L^q(\Omega; \Cd)$ is dense in $L^q(\Omega; \Cd)$,
the existence as well as the estimate \eqref{E-0-0} follows from Lemma \ref{Lemma-E-3} by a standard density argument.

Step 2. Assume $1<q<2$. As in the cases of $\Rd$ and $\Rdp$, the uniqueness follows from the existence for $q^\prime>2$, proved in Step 1.
By a duality argument, similar to that in the proof of Theorem \ref{thm-B-1}, one may show that if $F \in C_0^\infty(\Omega; \Cd)$ and
$f\in C_0^\infty (\Omega; \Cdd)$, the energy solutions of \eqref{E-eq} satisfy  the estimate \eqref{E-0-0}.
As before, the existence and the estimate \eqref{E-0-0} for $F\in L^q(\Omega; \Cd)$ and
$f\in L^q(\Omega; \Cdd)$ follow  by a density argument. 
\end{proof}

\begin{proof}[\bf Proof of Theorem \ref{main-2}]

The estimate \eqref{est-1} with $C$ depending on $\delta$ is contained in Theorem \ref{thm-E}.
To establish  the estimate \eqref{est-2} with $C$ independent of $\delta$ for $d\ge 3$, we first consider the case $q<(d/2)$ and 
argue by contradiction.
Suppose \eqref{est-2} is not true.
Then there exist sequences $\{F^\ell \}\subset L^q(\Omega; \Cd)$, $\{u^\ell \}\subset W^{1,q}_0(\Omega; \Cd)$,
$\{ \lambda^\ell \}\subset \Sigma_\theta$ such that  $\lambda^\ell \to 0$, 
\begin{equation}\label{E-4-0}
\left\{
\aligned
-\Delta u^\ell +\nabla p^\ell + \lambda^\ell u^\ell & = F^\ell, \\
\text{\rm div}(u^\ell) & =0,
\endaligned
\right.
\end{equation}
in $\Omega$, 
\begin{equation}\label{E-4-1}
 |\lambda^\ell | \| u^\ell \|_{L^q(\Omega)} =1,
 \end{equation}
and  $\| F^\ell \|_{L^q(\Omega)} \to 0$ as $\ell \to \infty$.
By Theorem \ref{thm-A-2} in the Appendix, 
$$
\|\nabla u^\ell \|_{L^s(\Omega)}
\le C \left\{ \| F^\ell \|_{L^q(\Omega)} + \|\lambda^\ell u^\ell \|_{L^q(\Omega)} \right\},
$$
where $ \frac{1}{s}=\frac{1}{q}-\frac{1}{d}$.
As a result, $\{ \nabla u^\ell \}$ is bounded in $L^s(\Omega; \Cdd)$ and by Sobolev imbedding, 
$\{ u^\ell \}$ is bounded in $L^{s_*}(\Omega; \Cd)$, where $\frac{1}{s_*}=\frac{1}{s}-\frac{1}{d}=\frac{1}{q}-\frac{2}{d}$ and
we have used the fact $u^\ell \in L^q(\Omega; \Cd)$.
By passing to a subsequence, we may assume that $\lambda^\ell u^\ell \to v$ weakly in $L^q(\Omega; \Cd)$,
$u^\ell \to u$ weakly in $L^{s_*}(\Omega; \Cd)$, and
$\nabla u^\ell \to \nabla u$ weakly in $L^s(\Omega; \Cdd)$.
Since $\lambda^\ell \to 0$, we obtain $v=0$.
It then follows from \eqref{E-4-0} that 
$-\Delta u +\nabla p=0$, $\text{\rm div} (u)=0$ in $\Omega$ and $u=0$ on $\partial \Omega$. 
Since $u \in L^{s_*} (\Omega; \Cd)$,  $\nabla u \in L^s(\Omega; \Cdd)$ and $s<d$, 
we deduce  from Lemma \ref{Lemma-U}  that $u=0$ in $\Omega$.
This implies that $u^\ell \to 0$ strongly in $L^q(\Omega\cap B; \Cd)$ for any ball $B$.
However, by \eqref{E-1-3} and \eqref{E-4-1},
we have
$$
1= |\lambda^\ell |  \| u^\ell \|_{L^q(\Omega)}
\le C \left\{ \| F^\ell \|_{L^q(\Omega)}
+ \| u^\ell \|_{L^q(\Omega\cap 3B_0)} \right\},
$$ 
which yields a contradiction.

Finally, we note that by duality, the estimate \eqref{est-2} holds for $\frac{d}{d-2}< q< \infty$.
This gives the estimate for $1<q< \infty$ in the case $d\ge 4$.
If $d=3$, the range $(3/2)\le q\le 3$ follows by using the Riesz-Thorin Interpolation Theorem.
\end{proof}


\section{Appendix} \label{Section-A} 

In this Appendix we prove several uniqueness and regularity  results in   exterior $C^1$ domains, which are used in the previous sections.
In the case of exterior domains with $C^2$ boundaries, the proofs may be found in  \cite{Galdi}. 

\begin{lemma}\label{re-harmonic}
Let $\Omega$ be an exterior $C^1$ domain in $\R^d$, $d\ge 2$ and $1< q< \infty$.
Suppose that $\phi\in \hW^{1, q}(\Omega; \C)$, $\Delta \phi=0$ in $\Omega$ and $n \cdot \nabla \phi=0$ on $\partial \Omega$.
Then $\phi$ is constant in $\Omega$.
\end{lemma}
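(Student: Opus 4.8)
The plan is to prove that $\nabla\phi\equiv0$ by reducing matters to an $L^2$ energy identity, after first pinning down the behaviour of $\phi$ at infinity. Fix $R_0$ so large that $\partial\Omega\subset B(0,R_0)$; then $\phi$ is harmonic, hence real-analytic, on $\{x:|x|>R_0\}$, and there it admits the classical expansion into interior harmonic polynomials plus decaying multipole fields. Since $\nabla\phi\in L^q(\Omega)$ and $\Omega$ is unbounded, every harmonic polynomial of degree $\ge1$ must be absent, as its gradient cannot lie in $L^q$. Thus $\phi=c+w$ on the exterior region, with $c$ constant and $w$ a decaying field, and using the identification of functions differing by a constant in $\hW^{1,q}(\Omega;\C)$ I may normalise $c=0$.

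The only term not yet controlled is the slowest multipole: in dimension $d\ge3$ this is the monopole $\sim|x|^{-(d-2)}$, which already decays, whereas in $d=2$ it is a logarithm $a\log|x|$ whose gradient $\sim|x|^{-1}$ does belong to $L^q$ when $q>2$, so the $L^q$ hypothesis alone does not exclude it. I would remove it using the vanishing-flux consequence of the Neumann condition: testing the weak formulation $\int_\Omega\nabla\phi\cdot\nabla\eta=0$ against $\eta\in C_0^\infty(\Rd)$ with $\eta\equiv1$ on $\overline{B(0,R_0)}$ and $\operatorname{supp}\eta\subset B(0,2R_0)$, and integrating by parts on the smooth annulus $\{R_0<|x|<2R_0\}$ where $\Delta\phi=0$, yields $\int_{|x|=R_0}\partial_r\phi\,dS=0$. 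Since this flux equals a nonzero multiple of $a$, we get $a=0$. Hence $|\phi(x)|\lesssim|x|^{-(d-2)}$ and $|\nabla\phi(x)|\lesssim|x|^{-(d-1)}$ for $d\ge3$, and $|\phi(x)|\lesssim|x|^{-1}$, $|\nabla\phi(x)|\lesssim|x|^{-2}$ for $d=2$.

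With these bounds the energy argument is routine. Testing $\int_\Omega\nabla\phi\cdot\nabla\eta=0$ with $\eta=\bar\phi\,\zeta_R^2$, where $\zeta_R\equiv1$ on $B(0,R)$, $\operatorname{supp}\zeta_R\subset B(0,2R)$ and $|\nabla\zeta_R|\lesssim R^{-1}$, gives
\begin{equation*}
\int_\Omega|\nabla\phi|^2\zeta_R^2=-2\int_\Omega\zeta_R\,\bar\phi\,\nabla\phi\cdot\nabla\zeta_R.
\end{equation*}
The right-hand side lives on the annulus $\{R<|x|<2R\}$, where the pointwise decay gives a bound $\lesssim R^{-1}\int_{R<|x|<2R}|\phi|\,|\nabla\phi|\,dx$ which tends to $0$ as $R\to\infty$ in every dimension $d\ge2$. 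Letting $R\to\infty$ and applying monotone convergence on the left forces $\int_\Omega|\nabla\phi|^2=0$, so $\nabla\phi\equiv0$ and $\phi$ is constant.

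The step that needs care, and the main obstacle, is the admissibility of $\bar\phi\,\zeta_R^2$ as a test function when $1<q<2$: the term $\int\zeta_R^2|\nabla\phi|^2$ requires $\nabla\phi\in L^2_{\loc}(\overline\Omega)$, which is not granted by $\nabla\phi\in L^q$ near $\partial\Omega$. Interior ellipticity makes $\phi$ smooth inside $\Omega$, so the only delicate region is a neighbourhood of the $C^1$ boundary, where I would invoke the local $W^{1,p}$ regularity (for all $1<p<\infty$) of the Neumann problem for the Laplacian in $C^1$ domains to upgrade $\nabla\phi$ to $L^2_{\loc}(\overline\Omega)$; for $q\ge2$ this step is vacuous. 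Thus the genuine work lies in the asymptotic analysis of the first step—uniform in $q$ and $d$, and in particular the exclusion of the two-dimensional logarithm by the flux identity—while the energy estimate is standard.
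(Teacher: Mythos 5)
Your argument is correct, but it takes a genuinely different route from the paper's in dimension two, which is where the real difficulty sits. The common core is the same: both proofs use the expansion at infinity together with $\nabla\phi\in L^q(\Omega)$ to remove the growing harmonic part, and both need the same local regularity up to the $C^1$ boundary for the Neumann problem in order to have $\nabla\phi\in L^2_{\loc}(\overline\Omega)$ when $1<q<2$ (the paper asserts this in one line, exactly as you do). The divergence is in the endgame. You kill the two-dimensional logarithm $a\log|x|$ outright by testing the weak Neumann formulation $\int_\Omega\nabla\phi\cdot\nabla\eta=0$ with a cutoff equal to $1$ on $B(0,R_0)$, obtaining the zero-flux identity $\int_{|x|=R_0}\partial_r\phi\,dS=0$ and hence $a=0$; this upgrades the decay to $|\nabla\phi(x)|\lesssim |x|^{-2}$ in $d=2$, after which a single cutoff energy estimate closes all dimensions $d\ge 2$ uniformly. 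The paper never excludes the logarithm: it integrates by parts over $\Omega\cap B(0,R)$, bounds the sphere term by Cauchy--Schwarz and a Poincar\'e inequality on $\partial B(0,R)$ after subtracting the spherical average $\beta$ (which silently absorbs the radially constant log term), and with $\nabla\phi=O(|x|^{1-d})$ this gives $\nabla\phi\equiv 0$ at once for $d\ge 3$ but only \emph{finiteness} of $\|\nabla\phi\|_{L^2(\Omega)}$ for $d=2$; the two-dimensional case is then finished by a hole-filling iteration of the Caccioppoli inequality, $\int_{\Omega\cap B(0,R)}|\nabla\phi|^2\le \frac{C_0}{C_0+1}\int_{\Omega\cap B(0,2R)}|\nabla\phi|^2$. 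What each approach buys: your flux identity yields sharper asymptotics, a unified energy step, and makes explicit that it is the Neumann condition, in its weak form --- which is indeed the correct reading of $n\cdot\nabla\phi=0$ here, and exactly how the lemma is invoked in Case (3) of the proof of Lemma \ref{Lemma-E-3} --- that forbids the monopole; the paper's hole-filling is slightly more robust, since it needs only $\nabla\phi=O(|x|^{-1})$ in $d=2$ and no normalization of the constant. One phrase of yours to tighten: a priori the growing part of the expansion at infinity is an entire harmonic function, not necessarily a polynomial, so ``every harmonic polynomial of degree $\ge 1$ is absent'' should be argued as the paper does --- $\nabla\phi(x)=o(1)$ as $|x|\to\infty$ from the mean value property and $\nabla\phi\in L^q$, then Liouville applied to the gradient of the entire part --- a cosmetic repair, not a gap.
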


\begin{proof}
By using  the  mean value property for harmonic functions and $|\nabla \phi|\in L^q(\Omega)$, we obtain 
$\nabla  \phi (x) = o(1)$ as $|x|\to \infty$.
By the expansion theorem at $\infty$  for harmonic functions \cite{Axler}, we deduce that 
$\nabla \phi (x) =O(|x|^{-1})$ for $d=2$.
In the case $d\ge 3$, we obtain $\nabla \phi(x)= O(|x|^{2-d})$.
It follows that $\phi (x) = O(\log |x|)$ for $d=3$ and  $\phi (x) = O(1)$ for $d\ge 4$.
Since $\phi$ is harmonic,  by the expansion theorem, this  implies that $\phi(x)= L  + O(|x|^{2-d})$ for some $L \in \C$ and that 
$\nabla \phi(x)=O(|x|^{1-d})$ as $|x|\to \infty$ for $d\ge 3$.
As a result, we have proved that $\nabla \phi(x)=O(|x|^{1-d})$ as $|x|\to \infty$ for $d\ge 2$.

Next,  note that since $\partial \Omega$ is $C^1$ and $n \cdot \nabla \phi=0$ on $\partial\Omega$, we have $\nabla \phi  \in L^2(\Omega\cap B(0, R); \Cd)$ for any $R>1$.
Moreover, for $R$ sufficiently large, 
$$
\aligned
\int_{\Omega\cap B(0, R)} |\nabla \phi |^2
 & =\int_{\partial B(0, R)} \frac{\partial \phi }{\partial n} (\phi-\beta)\\
 & \le \| \nabla \phi  \|_{L^2(\partial B(0, R))} \| \phi  -\beta \|_{L^2(\partial B(0, R))}
\le C R \|\nabla \phi \|^2_{L^2(\partial B(0, R))},
\endaligned
$$
where  $\beta =\fint_{\partial B(0, R)} \phi $ and we have used a Poincar\'e inequality on $\partial B(0, R)$.
By letting $R\to \infty$ and using $\nabla \phi(x)=O(|x|^{1-d})$ as $|x|\to \infty$ for $d\ge 2$,
 we see that $ \| \nabla \phi \|_{L^2(\Omega)} =0$ if $d\ge 3$ and 
$\|\nabla \phi \|_{L^2(\Omega)} < \infty$ if $d=2$.
As a result,  $\nabla \phi=0$ and $\phi$ is constant  in $\Omega$ for $d\ge 3$.
Finally, to handle the case $d=2$, we use the Caccioppoli  inequality,
\begin{equation}
\aligned
\int_{\Omega\cap B(0, R)} |\nabla \phi |^2
 & \le \frac{C}{R^2} \int_{B(0, 2R)\setminus B(0, R)}  |\phi - \alpha |^2\\
 & \le C_0 \int_{B(0, 2R)\setminus B(0, R)} |\nabla \phi|^2,
\endaligned
\end{equation}
for $R$ large, 
where $\alpha =\fint_{B(0, 2R)\setminus B(0, R)} \phi$ and we have used a Poincar\'e inequality.
It follows that
$$
\int_{\Omega\cap B(0, R)} |\nabla \phi|^2 \le \frac{C_0}{C_0 +1} \int_{\Omega\cap B(0, 2R)} |\nabla \phi|^2.
$$
By letting $R \to \infty$, we obtain $\|\nabla \phi \|_{L^2(\Omega)} \le  c_0 \|\nabla \phi \|_{L^2(\Omega)}$ for some $c_0<1$.
This implies that $\|\nabla \phi \|_{L^2(\Omega)}=0$ if $\|\nabla \phi\|_{L^2(\Omega)}< \infty$.
Consequently, we conclude that $\nabla \phi=0$ and $\phi$ is constant  in  $\Omega$  for $d\ge 2$.
\end{proof}

\begin{lemma}\label{Lemma-U}
Let $\Omega$ be an exterior $C^1$ domain in $\R^d$, $d\ge 2$. Let  $1<q<  d $ and $\frac{1}{q_*}=\frac{1}{q}-\frac{1}{d}$.
Suppose that $u \in L^{q_*} (\Omega; \Cd)$, $\nabla u \in L^q(\Omega; \Cdd)$, $u=0$ on $\partial\Omega$, and
\begin{equation}\label{A-20}
-\Delta u +\nabla p  =0 \quad \text{ and} \quad
\text{\rm div}(u)  =0
\end{equation}
hold in $\Omega$ in the sense of distributions. Then $u=0$ in $\Omega$.
\end{lemma}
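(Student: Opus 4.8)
The plan is to show that the Dirichlet integral $\int_\Omega|\nabla u|^2$ vanishes and then to deduce $u\equiv 0$, mirroring the structure of the proof of Lemma \ref{re-harmonic}. First I would record that, since the data are zero, interior elliptic regularity for the Stokes system together with the localized estimate of Theorem \ref{thm-B-2} gives $u\in W^{1,s}_{\loc}(\overline\Omega;\Cd)$ and $p\in L^s_{\loc}(\overline\Omega;\C)$ for every $s$, which is ample to justify the integrations by parts below. Fix a radial cut-off $\eta_R\in C^\infty(\Rd)$ with $\eta_R=1$ on $B(0,R)$, $\eta_R=0$ off $B(0,2R)$ and $|\nabla\eta_R|\le C/R$; for $R>R_0$ we have $\eta_R=1$ near $\partial\Omega$. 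Testing $-\Delta u+\nabla p=0$ against $\eta_R^2\overline u$ and integrating by parts over $\Omega$, the boundary terms on $\partial\Omega$ vanish because $u=0$ there, and the leading pressure contribution drops because $\text{\rm div}(u)=0$, leaving the identity
$$
\int_\Omega \eta_R^2|\nabla u|^2
=-2\operatorname{Re}\int_{A_R}\eta_R\,\partial_j u_k\,\overline{u_k}\,\partial_j\eta_R
+2\operatorname{Re}\int_{A_R}(p-c_R)\,\overline u\cdot\eta_R\nabla\eta_R ,
$$
where $A_R=B(0,2R)\setminus B(0,R)$ and $c_R\in\C$ is arbitrary, the freedom to subtract $c_R$ coming from $\int_\Omega \overline u\cdot\nabla(\eta_R^2)=-\int_\Omega\eta_R^2\,\overline{\text{\rm div}(u)}=0$. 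Writing $T_1(R),T_2(R)$ for the two error terms, and noting that the left-hand side increases to $\int_\Omega|\nabla u|^2$, it suffices to prove $T_1(R),T_2(R)\to0$ as $R\to\infty$.

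For $1<q\le 2$ this is immediate. Choosing $c_R=\fint_{A_R}p$ and invoking the localized pressure estimate for the Stokes system (the interior analogue of Theorem \ref{thm-B-2}, applied on annuli inside $\Omega$), one has $\|p-c_R\|_{L^q(A_R)}\le C\|\nabla u\|_{L^q(\widetilde A_R)}$ on a slightly larger annulus $\widetilde A_R$. Then Hölder's inequality with exponents $q$ and $q_*$ and the residual measure factor, together with $|\nabla\eta_R|\le C/R$ and $|A_R|\le CR^d$, yields
$$
|T_1(R)|+|T_2(R)|
\le C\,R^{\,d(1-2/q)}\,\|\nabla u\|_{L^q(\widetilde A_R)}\,\|u\|_{L^{q_*}(\widetilde A_R)} .
$$
Since $d(1-2/q)\le 0$ for $q\le 2$ and the tail norms tend to $0$, both error terms vanish in the limit.

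The substantive case is $2<q<d$, which forces $d\ge 3$ and where $u$ need not lie in $L^2$ near infinity, so the exponent above is positive and decay must be proved. On $\{|x|>R_0\}$ one has $\text{\rm div}(u)=0$ and $-\Delta u+\nabla p=0$; taking the divergence shows $p$ is harmonic, and applying $\Delta$ shows each $u_k$ is biharmonic there. Combining interior estimates for harmonic and biharmonic functions with $u\in L^{q_*}$, $\nabla u\in L^q$, and the expansion theorem at infinity (cf.\ \cite{Axler}), and using the integrability to discard every non-decaying term of the expansion, I would obtain $|u(x)|=O(|x|^{2-d})$ and $|\nabla u(x)|+|p(x)-p_\infty|=O(|x|^{1-d})$ as $|x|\to\infty$, where $p_\infty$ is a constant absorbed into $c_R$. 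With these rates $\int_{A_R}|\nabla u|\,|u|$ and $\int_{A_R}|p-p_\infty|\,|u|$ are $O(R^{3-d})$, so $|T_1(R)|+|T_2(R)|=O(R^{2-d})\to0$.

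In every case $\int_\Omega|\nabla u|^2=\lim_{R\to\infty}\int_\Omega\eta_R^2|\nabla u|^2=0$, hence $\nabla u\equiv 0$; as $\Omega$ is connected $u$ is constant, and $u=0$ on $\partial\Omega$ forces $u\equiv 0$. (When $d=2$ the constraint $q<d=2$ already places us in the regime of the second paragraph, so no separate decay analysis is required; alternatively, arguing through the Dirichlet integral as in Lemma \ref{re-harmonic}, the borderline integrability is absorbed by the Caccioppoli/hole-filling iteration used there once the pressure has been eliminated via Theorem \ref{thm-B-2}.) I expect the main obstacle to be the decay step for $2<q<d$: upgrading the bare integrability hypotheses to the fundamental-solution rates, that is, justifying that the exterior expansion of the biharmonic velocity and harmonic pressure contains no non-integrable terms — precisely the analogue of the step that is the crux of the proof of Lemma \ref{re-harmonic}.
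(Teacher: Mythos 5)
Your overall strategy --- derive decay of $(u,p)$ at infinity, then kill the Dirichlet integral by integration by parts over expanding regions --- is the same as the paper's (the paper integrates directly over $\partial B(0,R)$ rather than using a cutoff, a cosmetic difference). But the proof is incomplete at exactly the point you flag as the expected obstacle, and the mechanism you propose there would not close it. Treating $u$ as a generic biharmonic field and $p$ as a generic harmonic function, and discarding the terms of their expansions at infinity that are incompatible with the integrability hypotheses, does \emph{not} yield $|u(x)|=O(|x|^{2-d})$ and $|\nabla u(x)|+|p(x)-p_\infty|=O(|x|^{1-d})$: the biharmonic expansion contains intermediate homogeneities such as $|x|^{4-d}$ (note $\Delta |x|^{4-d}=2(4-d)|x|^{2-d}$, so $\Delta^2|x|^{4-d}=0$), and for $q$ close to $d$ a term of size $|x|^{4-d}$ is perfectly compatible with $u\in L^{q_*}$ and with the preliminary decay $\nabla u(x)=o(|x|^{-d/q})$ coming from interior estimates, since $d/q$ may be barely larger than $1$. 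With only $|u|\sim|x|^{4-d}$, $|\nabla u|\sim|x|^{3-d}$, your annulus error terms are $O(R^{6-d})$ and do not vanish for $d\le 6$. What rules out such terms is the coupled Stokes structure, not biharmonicity and harmonicity separately; this is precisely what the paper's proof supplies: after the interior-estimate step (giving $\nabla u(x)=o(|x|^{-d/q})$, $u(x)=o(1)$, and, after normalizing, $p(x)=o(|x|^{-d/q})$), it represents $(u,p)$ by the Green representation formula for the Stokes system on $\{R_0<|x|<R\}$ as layer potentials on the two spheres, shows the potentials on $\partial B(0,R)$ tend to $0$ as $R\to\infty$ using the preliminary decay, and reads off the rates $O(|x|^{2-d})$, $O(|x|^{1-d})$ from the Stokes fundamental solution. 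Without this (or an equivalent) argument, your case $2<q<d$ is an assertion, not a proof.

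Two smaller points. First, your ``immediate'' case $1<q\le 2$ is not immediate as written: the three-exponent H\"older inequality behind the bound $CR^{d(1-2/q)}\|\nabla u\|_{L^q}\|u\|_{L^{q_*}}$ requires $\frac1q+\frac1{q_*}\le 1$, i.e.\ $q\ge \frac{2d}{d+1}$, and fails for $1<q<\frac{2d}{d+1}$ (which in particular covers part of the admissible range when $d=2$). This is fixable --- e.g.\ replace H\"older by the interior sup bound $\sup_{A_R}|u|\le CR^{-d/q_*}\|u\|_{L^{q_*}(\widetilde A_R)}$, which restores the same power of $R$, or simply use the pointwise decay $\nabla u(x)=o(|x|^{-d/q})$ with $d/q>1$ as the paper does for $d=2$ --- but the step needs to be said. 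Second, the preliminary step that you do not make explicit, namely upgrading the bare hypotheses to $u(x)=o(1)$ and $\nabla u(x)=o(|x|^{-d/q})$ via scaled interior estimates on balls $B(x,|x|/4)$, is what makes any limiting argument at infinity (yours or the paper's) run; it should be carried out before invoking any expansion or representation formula.
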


\begin{proof} The proof is similar to that of Lemma \ref{re-harmonic} for the case $d\ge 3$.
By the interior estimates for the Stokes equations,
\begin{equation}\label{A-22}
|x| |\nabla^2 u (x)| + |\nabla u(x)|
\le C\left(\fint_{B(x, R/4)} |\nabla u|^q \right)^{1/q},
\end{equation}
where $R=|x|$ is sufficiently large. It   follows from  $|\nabla u|\in L^q(\Omega)$ that 
$ \nabla u (x) = o(|x|^{-\gamma}) $ as $|x|\to \infty$, where $\gamma = (d/q)$.
Since $\gamma>1$, this implies that $\lim_{|x|\to \infty}  u(x)$ exists.
Using $u \in L^{q_*} (\Omega; \Cd)$, we deduce that $u(x)=o(1)$ as $|x|\to \infty$.
Also note that by the interior estimates,
$\nabla^2 u (x)= o(|x|^{-\gamma -1}) $ as $|x|\to \infty$.
Thus, $\nabla p(x) =o(|x|^{-\gamma-1}) $.
It follows that $\lim_{|x|\to \infty} p(x)$ exists.
By subtracting a constant, we may assume that $\lim_{|x|\to \infty} p(x)=0$.
As a result, we obtain $p(x)= o(|x|^{-\gamma}) $ as $|x|\to \infty$.

Next, assume $d\ge 3$. We use the Green representation formula for the Stokes equations in the domain $D_R = \{ x:  R_0< |x|< R\}$ to write $(u(x), p(x)) $ as 
a sum of layer potentials on $\partial D_R=\partial B(0, R) \cup \partial B(0, R_0)$.
Since $ |\nabla u(x)| + |p(x)| =o(|x|^{-\gamma})$, where $\gamma>1$,  and $|u(x)| = o(1)$ as $|x|\to \infty$, it is not hard to see that  the 
layer potentials on $\partial B(0, R)$  converge to $0$ as $R \to \infty$.
This allows to upgrade the decay of $(u, p)$ at $\infty$ to 
\begin{equation}\label{A-21}
|x|^{-1} | u(x)| + |\nabla u(x) | + |p(x)| = O(|x|^{1-d}) \quad \text{ as } |x|\to \infty
\end{equation}
for $d\ge 3$.

Finally, we note that since $\partial \Omega$ is $C^1$ and $u=0$ on $ \partial \Omega$, we have
$u \in W^{1, 2} (\Omega \cap B(0, R); \C^d)$ for any $R>1$.
Moreover, for $R>1$ large, 
$$
\int_{\Omega\cap B(0, R)} |\nabla u|^2=
\int_{\partial B(0, R)} \left(\frac{\partial u}{\partial n} - n p \right) \cdot u. 
$$
In view of \eqref{A-21} for $d\ge 3$ as well as the decay estimates,  $ u(x)=o(1)$ and
$|\nabla u (x)| +|p(x)| =o(|x|^{-\gamma} ) $  for $d=2$, by  letting $R\to \infty$, 
we obtain $ \|\nabla u \|_{L^2(\Omega)} =0$.
Since $u=0$ on $\partial \Omega$, it  follows that $u=0$ in $\Omega$.
\end{proof}

The following theorem is used in the proof of  the estimate \eqref{est-3} for small $|\lambda|$.

\begin{thm}\label{thm-A-2}
Let $\Omega$ be an exterior $C^1$ domain in $\R^d$, $d\ge 3$ and  $1<q<  (d/2) $.
Let $u\in W^{1, q}_0(\Omega; \Cd)$ be 
a solution of
\begin{equation}\label{A-10}
-\Delta u +\nabla p    =F \quad \text{ and } \quad 
\text{\rm div}(u)  =0
\end{equation}
in $\Omega$, where $F\in L^q(\Omega; \Cd)$. Then $u\in W^{1, s}_0(\Omega; \Cd)$ and
\begin{equation}\label{A-11}
\| \nabla u \|_{L^s(\Omega)} 
\le C \| F \|_{L^q(\Omega)},
\end{equation}
where $\frac{1}{s} =\frac{1}{q}-\frac{1}{d}$ and $C$ depends on $d$, $q$ and $\Omega$.
\end{thm}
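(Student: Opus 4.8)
The plan is to establish \eqref{A-11} by a localization that separates the region near infinity, where $\Omega$ coincides with the exterior of a ball, from a bounded neighborhood of $\partial\Omega$, where the $C^1$ theory of Section \ref{section-B} is available. The gain of integrability $\frac1s=\frac1q-\frac1d$ for $\nabla u$ originates from the solution operator of the steady ($\lambda=0$) Stokes system in $\Rd$, whose gradient is of first-order Riesz-potential type rather than Calder\'on--Zygmund type; this is the conceptual heart of the argument.

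First I would record the whole-space estimate for $-\Delta V+\nabla\pi=G$, $\text{\rm div}(V)=0$ in $\Rd$. The Fourier representation (as in \eqref{R-3}, but with $\lambda=0$) shows that $\widehat{\partial_\ell V_j}(\xi)=m(\xi)\widehat{G}(\xi)$ with a matrix symbol $m$ that is smooth off the origin and homogeneous of degree $-1$; hence $G\mapsto\nabla V$ factors as a zeroth-order Calder\'on--Zygmund operator composed with the Riesz potential $I_1=(-\Delta)^{-1/2}$, and the Hardy--Littlewood--Sobolev inequality yields $\|\nabla V\|_{L^s(\Rd)}\le C\|G\|_{L^q(\Rd)}$ for $1<q<d$ and $\frac1s=\frac1q-\frac1d$. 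Note that Theorem \ref{R-thm-1} cannot be invoked here, since $0\notin\Sigma_\theta$, so this steady estimate must be supplied separately.

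Next I would localize. Fix $R_0$ with $\Omega\setminus B(0,R_0)=\Rd\setminus B(0,R_0)$ and take $\varphi\in C^\infty(\Rd)$ equal to $1$ outside $B(0,2R_0)$ and to $0$ on $B(0,R_0)$. Because $\text{\rm div}(u)=0$ and $u=0$ on $\partial\Omega$, the flux of $u$ across $\partial B(0,2R_0)$ vanishes, so the Bogovskii operator on the annulus $A=\{R_0<|x|<2R_0\}$ produces $w\in W^{2,q}$, supported in $A$, with $\text{\rm div}(w)=u\cdot\nabla\varphi$. Then $U:=\varphi u-w$ is divergence free on $\Rd$, equals $u$ outside $B(0,2R_0)$, and solves a steady Stokes system with forcing $G=\varphi F-2\nabla\varphi\cdot\nabla u-u\,\Delta\varphi+p\,\nabla\varphi+\Delta w$. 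Every term of $G$ other than $\varphi F$ is supported in $A$ and lies in $L^q$, using $u\in W^{1,q}$, $w\in W^{2,q}$, and the local regularity $p\in L^q_{\loc}(\overline\Omega)$ furnished by Theorem \ref{thm-B-2}; the whole-space estimate then controls $\|\nabla u\|_{L^s(\Omega\setminus B(0,2R_0))}=\|\nabla U\|_{L^s(\Omega\setminus B(0,2R_0))}$. On the bounded piece $\Omega\cap B(0,2R_0)$ I would apply the gain estimate \eqref{iW} of Theorem \ref{thm-B-2}, read with output exponent $s$ and input exponent $q$ (so that $\frac1q=\frac1s+\frac1d$), which upgrades $\nabla u$ from $L^q$ to $L^s$ there. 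Combining the two regions gives the a priori bound $\|\nabla u\|_{L^s(\Omega)}\le C\big(\|F\|_{L^q(\Omega)}+\|u\|_{L^q(\Omega\cap B(0,3R_0))}\big)$.

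The main difficulty is then to discard the compact term $\|u\|_{L^q(\Omega\cap B(0,3R_0))}$ and reach the clean bound $\|\nabla u\|_{L^s(\Omega)}\le C\|F\|_{L^q(\Omega)}$. I would argue by contradiction as in Lemma \ref{Lemma-E-3}: assuming failure, choose solutions $u^\ell$ with $\|\nabla u^\ell\|_{L^s(\Omega)}=1$ and $\|F^\ell\|_{L^q(\Omega)}\to0$; the a priori estimate keeps $\|u^\ell\|_{L^q(\Omega\cap B(0,3R_0))}$ bounded below, while Sobolev embedding bounds $u^\ell$ in $L^{s_*}$ with $\frac1{s_*}=\frac1s-\frac1d=\frac1q-\frac2d$. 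Passing to weak limits in $L^s$ and $L^{s_*}$ and to a strong local limit by Rellich, the limit $u$ solves the homogeneous exterior Stokes system and lies in the class $u\in L^{s_*}$, $\nabla u\in L^s$, $u|_{\partial\Omega}=0$ to which Lemma \ref{Lemma-U} applies; this is precisely where the hypothesis $q<d/2$ enters, since it guarantees $\frac1{s_*}=\frac1q-\frac2d>0$ and $1<s<d$. Lemma \ref{Lemma-U} then forces $u=0$, contradicting the lower bound on the local norm. The two most delicate points in executing this plan are the control of the pressure term $p\,\nabla\varphi$ (hence the need for the a priori local $L^q$ regularity of $p$) and the divergence correction $w$, both of which are required in order to reduce honestly to the solenoidal whole-space estimate of the second paragraph.
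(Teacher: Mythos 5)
Your proposal is correct and follows essentially the same route as the paper's proof: the same cutoff decomposition into a neighborhood of infinity and the bounded $C^1$ piece, the same improved estimate \eqref{iW} on the bounded piece, the same a priori bound with the compact lower-order term $\| u \|_{L^q(\Omega\cap B_0)}$, and the same compactness--contradiction argument resting on Lemma \ref{Lemma-U}, with $q<d/2$ entering exactly where you say it does. The only variation is minor: near infinity you restore solenoidality via a Bogovskii correction (with the flux compatibility correctly verified) and derive the whole-space gain $L^q\to L^s$ by the Riesz-potential/Hardy--Littlewood--Sobolev argument, whereas the paper simply invokes Galdi's $W^{2,q}$ theory for the steady Stokes system in $\Rd$ with the inhomogeneous divergence $u\cdot\nabla\varphi_1$, avoiding the Bogovskii step altogether.
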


\begin{proof}

Since $W^{1, q}_0(\Omega; \Cd)\subset L^s(\Omega; \Cd) $. 
It suffices to prove \eqref{A-11}. We divide the proof into two steps.

Step 1. We show that the solution $u$ satisfies the estimate,
\begin{equation}\label{A-12}
\|\nabla u \|_{L^s(\Omega)}
\le C \left\{ \| F \|_{L^q(\Omega)} + \| u \|_{L^q(\Omega\cap B_0)} \right\},
\end{equation}
where $B_0= B(0, 2R_0)$ and $R_0>1$ is sufficiently large. 
To this end, we choose $R_0>1$ such that $\Omega\setminus B(0, R_0)  = \Rd\setminus B(0, R_0)$ and
$\Omega \cap B(0, 2R_0)$ is a bounded $C^1$ domain.
Choose $\varphi_1 \in C_0^\infty(\Rd; \R)$ such that $\varphi_1=1$ in $\Omega\setminus B(0, (3/2)R_0)$ and
$\varphi_1 =0 $ in $B(0, (5/4) R_0)$.
Let $\varphi_2=1-\varphi_1$.
Then 
$$
\left\{
\aligned
-\Delta (u\varphi_1) +\nabla (p\varphi_1)
& =F\varphi_1 -2 (\nabla u) (\nabla \varphi_1)  - u \Delta \varphi_1 + p \nabla \varphi_1,\\
\text{\rm div}(u\varphi_1)
&=u\cdot  \nabla \varphi_1
\endaligned
\right.
$$
in $\Rd$.
It follows  from the $W^{2, q}$ estimates \cite{Galdi} for the Stokes equations (with $\lambda=0$) in $\Rd$ that 
\begin{equation}\label{A-13}
\aligned
\|\nabla (u \varphi_1)\|_{L^s(\Rd)}
& \le C \Big\{ \| F\varphi_1 \|_{L^q(\Rd)}
+ \| (\nabla u) (\nabla \varphi_1)\|_{L^q(\Rd)}\\
&\qquad\qquad
+ \| u \Delta \varphi_1\|_{L^q(\Rd)}
+ \| p \nabla \varphi_1 \|_{L^q(\Rd)} + \| \nabla (u \nabla \varphi_1) \|_{L^q(\Rd)}
 \Big\}.
\endaligned
\end{equation}
Let $\Omega_0 = \Omega \cap B(0, 2R_0)$.
Note that $u \varphi_2=0$ on $\partial \Omega_0=\partial\Omega \cup \partial B(0, 2R_0)$ and 
$$
\left\{
\aligned
-\Delta (u\varphi_2) +\nabla (p\varphi_2)
& =F\varphi_2 -2 (\nabla u) (\nabla \varphi_2)  - u \Delta \varphi_2 + p \nabla \varphi_2,\\
\text{\rm div}(u\varphi_2)
&=u\cdot  \nabla \varphi_2
\endaligned
\right.
$$
in $\Omega_0$.
It follows from the $W^{1, q}$ estimates for the Stokes equations (with $\lambda=0$) in the  $C^1$ domain $\Omega_0$  that
\begin{equation}\label{A-14} 
\aligned
\|\nabla (u\varphi_2)\|_{L^s(\Omega_0)}
 & \le C \Big\{ 
\| F \varphi_2 \|_{L^q(\Omega_0)}
+ \| (\nabla u) (\nabla \varphi_2) \|_{L^q(\Omega_0)}
+ \| u \Delta \varphi_2\|_{L^q(\Omega_0)}\\
 &\qquad\qquad  + \| p \nabla \varphi_2\|_{L^q (\Omega_0)} 
+\| u \nabla \varphi_2 \|_{L^q(\Omega_0)} \Big\}.
\endaligned
\end{equation}
See Remark \ref{re-B-2}.
The estimate \eqref{A-12} follows from \eqref{A-13} and \eqref{A-14} as well as the interior estimates for
the Stokes equations.

\medskip

Step 2. We establish the estimate \eqref{A-11} by a compactness argument.

Suppose \eqref{A-11} is not true.
Then there exist sequences $\{F^\ell \}\subset L^q(\Omega; \Cd)$, $\{ u^\ell \}\subset W^{1, q}_0 (\Omega; \Cd)\cap W^{1, s}_0(\Omega; \Cd)$, such that 
\begin{equation}\label{A-15}
\left\{
\aligned
-\Delta u^\ell  +\nabla p^\ell  & =F^\ell,\\
\text{\rm div}(u^\ell ) & =0
\endaligned
\right.
\end{equation}
hold in $\Omega$ for some $p^\ell \in L^1_{\loc} (\Omega; \C)$,
\begin{equation}\label{A-16}
\|\nabla u^\ell \|_{L^s(\Omega)}  =1,
\end{equation}
and $\| F^\ell \|_{L^q(\Omega)} \to 0$, as $\ell \to \infty$.
Since $\|u^\ell \|_{L^{s_*}(\Omega)}\le C \| \nabla u^\ell  \|_{L^s(\Omega)} =  C$, where $\frac{1}{s_*}=\frac{1}{s}-\frac{1}{d}$, 
by passing to a subsequence, we may assume $u^\ell \to u$ weakly in $L^{s_*}(\Omega; \Cd)$ and
$\nabla u^\ell \to \nabla u $ weakly in $L^s(\Omega; \Cdd)$.
It follows that $u$ is a solution of \eqref{A-10} with $F=0$.
Note that $q<(d/2)$ implies $s=q_*<d$. 
Thus,  by Lemma \ref{Lemma-U},   $u=0$ in $\Omega$.
This implies that $u^\ell \to 0$ strongly in $L^q(\Omega\cap B_0; \Cd)$.
However, by \eqref{A-12},
$$
\|\nabla u^\ell \|_{L^s(\Omega)}
\le C \left\{ \| F^\ell \|_{L^q(\Omega)}
+ \| u^\ell \|_{L^q(\Omega\cap B_0)} \right\},
$$
which leads to a contradiction with \eqref{A-16} if we let $\ell \to \infty$.
\end{proof}

Recall that 
$
C_{0, \sigma}^\infty (\Omega)
=\left\{ u\in C_0^\infty (\Omega; \Cd): \text{\rm div}(u)=0 \text{ in } \Omega \right\}.
$
Let  $L^q_\sigma (\Omega)$ denote  the closure of $C_{0, \sigma}^\infty(\Omega)$ in $L^q(\Omega; \Cd)$ and
$$
G_q (\Omega) =\left \{ u: u=\nabla p  \text{ for some } p \in \hW^{1, q} (\Omega; \C) \right\}.
$$

\begin{thm}\label{thm-A-1}
Let $\Omega$ be a bounded or exterior domain with $C^1$ boundary in $\Rd$, $d\ge 2$. Then
\begin{equation}\label{H-de}
L^q(\Omega; \Cd)
= L^q_\sigma (\Omega) \oplus G_q (\Omega)
\end{equation}
for $1< q< \infty$. That is, for any $u\in L^q(\Omega; \Cd)$, there exists a unique $(v, w) \in L^q_\sigma (\Omega)
\times G_q(\Omega)$ such that $u=v+w$ in $\Omega$ and
\begin{equation}
\| v\|_{L^q(\Omega)} + \| w \|_{L^q(\Omega)}\le C \| u \|_{L^q(\Omega)},
\end{equation}
where $C$ depends on $d$, $q$ and $\Omega$.
\end{thm}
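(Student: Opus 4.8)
The plan is to reduce the decomposition to the unique solvability, in $L^q$ and in the dual exponent $q^\prime$, of the weak Neumann problem: given $U\in L^q(\Omega;\Cd)$, find $p\in\hW^{1,q}(\Omega;\C)$, unique up to an additive constant, with
\[
\int_\Omega \nabla p\cdot\nabla\phi=\int_\Omega U\cdot\nabla\phi
\qquad\text{for all }\phi\in\hW^{1,q^\prime}(\Omega;\C),
\]
together with the bound $\|\nabla p\|_{L^q(\Omega)}\le C\|U\|_{L^q(\Omega)}$. Granting this, I would set $w=\nabla p\in G_q(\Omega)$ and $v=U-w$; the identity above says precisely that $v$ annihilates every gradient $\nabla\phi$ with $\phi\in\hW^{1,q^\prime}(\Omega;\C)$, and the resulting map $U\mapsto v$ is a bounded projection whose kernel is $G_q(\Omega)$. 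That its range coincides with $L^q_\sigma(\Omega)$, and that the decomposition is unique, both reduce to the fact that a gradient $\nabla p\in L^q(\Omega)$ which is solenoidal in the weak sense must vanish: this is immediate for a bounded domain, and for an exterior domain it is exactly Lemma~\ref{re-harmonic}, since such a $p$ is harmonic with vanishing Neumann data. The identification of the range with $L^q_\sigma(\Omega)$ additionally uses the solvability of the dual ($q^\prime$) problem through a Hahn--Banach/de Rham argument.

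The heart of the matter is the solvability and the $L^q$ estimate for the weak Neumann problem in a $C^1$ domain, which I would establish by the localization-and-perturbation scheme already used for the Stokes system in Sections~\ref{section-H}--\ref{section-B}. For $q=2$ existence and uniqueness follow from the Lax--Milgram theorem on $\hW^{1,2}(\Omega;\C)$ modulo constants. For the a priori estimate with general $q$, I would flatten the boundary to reduce to the half-space, where the Neumann problem for the Laplacian is solved explicitly by even/odd reflection and the Mikhlin multiplier theorem (Lemma~\ref{H-Lemma}); a region above a Lipschitz graph is then handled by the perturbation argument of Theorem~\ref{G-thm-1}, the error terms being controlled by $\|\nabla^\prime\psi\|_\infty$ because no $W^{2,q}$ norm enters and everything is phrased through $\nabla p\in L^q$ and the $\hW^{-1,q}$ duality. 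A partition of unity, together with the compactness argument of Lemmas~\ref{Lemma-B-3} and~\ref{Lemma-B-4}, then absorbs the lower-order terms and yields the estimate on a bounded $C^1$ domain. Existence for $q>2$ follows by approximation with smooth domains and smooth data exactly as in Step~1 of the proof of Theorem~\ref{thm-B-1}, and the range $1<q<2$ is obtained by duality against the case $q^\prime>2$, as in Step~2 there.

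For an exterior $C^1$ domain I would proceed as in Section~\ref{section-E}: fix a large ball $B_0$, split $U$ into a piece supported near $\partial\Omega$ (treated on the bounded $C^1$ domain $\Omega\cap B_0$) and a piece supported near infinity (treated on the exterior of a ball, essentially the whole-space Neumann problem), and glue the two potentials with a cutoff. The a priori estimate is then closed up by the contradiction/compactness scheme of Lemma~\ref{Lemma-E-3}: a normalized sequence of solutions would converge to a harmonic function with vanishing Neumann data and gradient in $L^q$, which must be constant by Lemma~\ref{re-harmonic}, contradicting the normalization.

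The main obstacle is precisely this $L^q$ Neumann estimate in the merely $C^1$ (small Lipschitz constant) setting: since $W^{2,q}$ regularity fails, one cannot invoke second-order estimates and must run the entire perturbation at the level of first derivatives, verifying that each error term produced by the change of variables $\Psi$ is bounded by $\|\nabla^\prime\psi\|_\infty$ in the relevant $L^q$ and $\hW^{-1,q}$ norms. A secondary delicate point is the exterior case, where $p$ is defined only modulo constants and may grow at infinity; controlling this and ruling out a nontrivial limiting Neumann solution both rest on Lemma~\ref{re-harmonic}.
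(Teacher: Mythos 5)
The paper does not actually prove Theorem \ref{thm-A-1}: it records the statement for context, refers to \cite{Sohr-1994} and \cite{Fabes-1998} for a (sketch of the) proof, and explicitly remarks that the theorem is not used anywhere in the paper. So there is no in-paper argument to compare against; judged on its own, your blueprint is correct and is essentially the route of the cited references, transplanted into this paper's machinery. The reduction of \eqref{H-de} to unique solvability of the weak Neumann problem in $\hW^{1,q}(\Omega;\C)$ \emph{and} in $\hW^{1,q^\prime}(\Omega;\C)$ is the standard Simader--Sohr equivalence; the half-space case by even/odd reflection and Lemma \ref{H-Lemma} is sound; and the graph case is, if anything, easier than Theorem \ref{G-thm-1}, since after the change of variables $\Psi$ one has a single scalar divergence-form equation whose coefficient matrix differs from the identity by $O(\|\nabla^\prime \psi\|_\infty)$ (for $\|\nabla^\prime\psi\|_\infty\le 1$), with no pressure coupling and no $W^{2,q}$ norms anywhere, so the Neumann-series perturbation closes at the level of $\nabla p\in L^q$ exactly as you say.

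Two points in your sketch need more care. First, in the localization on a bounded domain there is no resolvent parameter, so the lower-order error terms (of the type $p\,\nabla\varphi$) cannot be absorbed by taking $|\lambda|$ large as in Lemma \ref{Lemma-B-3}; your appeal to ``the compactness argument of Lemmas \ref{Lemma-B-3} and \ref{Lemma-B-4}'' conflates two different mechanisms, since those lemmas absorb errors via $|\lambda|\ge\lambda_0$ and bootstrapping, not compactness. The correct closing step is the contradiction/compactness scheme actually used in Lemma \ref{Lemma-E-3} and in Step 2 of Theorem \ref{thm-A-2}, combined with uniqueness of the weak Neumann solution (trivial for bounded $\Omega$, Lemma \ref{re-harmonic} for exterior $\Omega$). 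Second, the identification of the range of your projection with $L^q_\sigma(\Omega)$ --- i.e., that annihilating all $\nabla\phi$ with $\phi\in\hW^{1,q^\prime}(\Omega;\C)$ forces membership in the closure of $C^\infty_{0,\sigma}(\Omega)$ --- is not automatic and genuinely requires the dual $(q^\prime)$ problem together with a de Rham/density argument; you flag this correctly but it is where a full write-up would have to do real work, especially in the exterior case where $p$ is only defined modulo constants and the behavior at infinity enters. With those two points fleshed out, your proposal is a legitimate self-contained proof consistent with the references the paper points to.
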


The formula \eqref{H-de} is referred to as the Helmholtz decomposition,
which is well known in  the case of bounded or exterior domains with smooth boundaries (see \cite{Sohr-1994} for references).
In the case of bounded or exterior domains with $C^1$ boundaries,  a sketch of the proof for \eqref{H-de} 
may be found in \cite{Sohr-1994}.  Also see \cite{Fabes-1998}.
The decomposition also holds for $1<q<\infty$ if $\Omega$ is a bounded convex domain \cite{Geng-2010}.
If $\Omega$ is a bounded or exterior domain with Lipschitz boundaries,
the Helmholtz decomposition \eqref{H-de} holds if 
\begin{equation}\label{lip-r}
\left\{
\aligned
(3/2)-\e <  & q< 3+\e  & \quad & \text{ for } d\ge 3,\\
(4/3)-\e < & q < 4 +\e & \quad & \text{ for  } d=2,
\endaligned
\right.
\end{equation}
where $\e>0$ depends on $\Omega$. The ranges in \eqref{lip-r} are known to be sharp. See \cite{Fabes-1998}.
We remark that Theorem \ref{thm-A-1} is not used in this paper. 

\medskip

\noindent{\bf Conflict of interest.}  The authors declare that there is no conflict of interest.

\medskip

\noindent{\bf Data availability. }
 Data sharing not applicable to this article as no datasets were generated or analyzed during the current study.

 \bibliographystyle{amsplain}
 
\bibliography{Geng-Shen2023.bbl}

\bigskip

\begin{flushleft}

Jun Geng, School of Mathematics and Statistics, Lanzhou University, Lanzhou, People's Republic of China.

E-mail:  gengjun@lzu.edu.cn

\bigskip

Zhongwei Shen,
Department of Mathematics,
University of Kentucky,
Lexington, Kentucky 40506,
USA.

E-mail: zshen2@uky.edu
\end{flushleft}

\bigskip

\medskip

\end{document}